\numberwithin{equation}{section}
\numberwithin{figure}{section}
\theoremstyle{plain}
\newtheorem{theorem}{Theorem}[section]
\theoremstyle{plain}
\newtheorem*{theorem*}{Theorem}
\theoremstyle{plain}
\newtheorem{proposition}[theorem]{Proposition}
\theoremstyle{plain}
\newtheorem{lemma}[theorem]{Lemma}
\theoremstyle{plain}
\newtheorem{corollary}[theorem]{Corollary}
\theoremstyle{definition}
\newtheorem{definition}[theorem]{Definition}
\theoremstyle{definition}
\theoremstyle{definition}
\newtheorem{example}[theorem]{Example}
\theoremstyle{remark}
\newtheorem{remark}[theorem]{Remark}
\theoremstyle{definition}
\newtheorem{problem}[theorem]{Problem}
\theoremstyle{definition}
\newtheorem{conjecture}[theorem]{Conjecture}
\newcommand{\Uqb}{U_q(\mathfrak{b})}
\newcommand{\Uqg}{U_q(\mathfrak{g})}
\newcommand{\Uqh}{U_q(\mathfrak{h})}
\newcommand{\Uqk}{U_q(\mathfrak{k})}
\newcommand{\Uqn}{U_q(\mathfrak{n})}
\newcommand{\Uqnm}{U_q(\mathfrak{n}_-)}
\newcommand{\UqlS}{U_q(\mathfrak{l}_S)}
\newcommand{\Uqlk}{U_q(\mathfrak{l}_k)}
\newcommand{\CqG}{\mathbb{C}_q[G]}
\newcommand{\id}{\mathrm{id}}
\newcommand{\bbC}{\mathbb{C}}
\newcommand{\bbN}{\mathbb{N}}
\newcommand{\bbR}{\mathbb{R}}
\newcommand{\bbZ}{\mathbb{Z}}
\newcommand{\op}{\mathrm{op}}
\newcommand{\lieb}{\mathfrak{b}}
\newcommand{\liec}{\mathfrak{c}}
\newcommand{\lieg}{\mathfrak{g}}
\newcommand{\lieh}{\mathfrak{h}}
\newcommand{\liek}{\mathfrak{k}}
\newcommand{\liel}{\mathfrak{l}}
\newcommand{\lien}{\mathfrak{n}}
\newcommand{\liep}{\mathfrak{p}}
\newcommand{\lieu}{\mathfrak{u}}
\newcommand{\calA}{\mathcal{A}}
\newcommand{\calB}{\mathcal{B}}
\newcommand{\calC}{\mathcal{C}}
\newcommand{\calQ}{\mathcal{Q}}
\newcommand{\calR}{\mathcal{R}}
\newcommand{\schub}{U_q}
\newcommand{\tschub}{U'_q}
\newcommand{\bfM}{\mathbf{M}}
\newcommand{\roots}{\Phi}
\newcommand{\simpleroots}{\Pi}
\newcommand{\lspan}{\mathrm{span}}
\newcommand{\tmap}{\mathsf{T}}
\newcommand{\tmapS}{\mathsf{T}_S}
\newcommand{\pmap}{\mathsf{P}}
\newcommand{\qnil}{\lien^q}
\newcommand{\diff}{\mathrm{d}}
\newcommand{\del}{\partial}
\newcommand{\delbar}{\bar{\partial}}
\newcommand{\Cq}{\mathbb{C}_q}
\newcommand{\tang}{\mathcal{T}}
\begin{document}

\title[Equivariant quantizations and covariant differential calculi]{Equivariant quantizations of the positive nilradical and covariant differential calculi}

\author{Marco Matassa}

\address{OsloMet – Oslo Metropolitan University}

\email{marco.matassa@oslomet.no}

\begin{abstract}
Consider a decomposition $\lien = \lien_1 \oplus \cdots \oplus \lien_r$ of the positive nilradical of a complex semisimple Lie algebra of rank $r$, where each $\lien_k$ is a module under an appropriate Levi factor.
We show that this can be quantized as a finite-dimensional subspace $\qnil_k = \qnil_1 \oplus \cdots \oplus \qnil_r$ of the positive part of the quantized enveloping algebra, where each $\qnil_k$ is a module under the left adjoint action of a quantized Levi factor.
Furthermore, we show that $\bbC \oplus \qnil$ is a left coideal, with the possible exception of components corresponding to some exceptional Lie algebras.
Finally we use these quantizations to construct covariant first-order differential calculi on quantum flag manifolds, compatible in a certain sense with the decomposition above, which coincide with those introduced by Heckenberger-Kolb in the irreducible case.
\end{abstract}

\maketitle

\section{Introduction}

\subsection{Background}

Any Lie subalgebra $\liek \subset \lieg$ gives rise to a Hopf subalgebra $U(\liek) \subset U(\lieg)$ at the level of universal enveloping algebras. Conversely, any Hopf subalgebra of $U(\lieg)$ is of this form by the Milnor-Moore theorem (in characteristic zero).
The situation is drastically different for the \emph{quantized enveloping algebra} $\Uqg$, which is a Hopf algebra deformation of $U(\lieg)$, where $\lieg$ is a complex semisimple Lie algebra.
In this case it is well-known that $\Uqg$ admits fewer Hopf subalgebras than in the classical case, essentially only those corresponding to Dynkin subdiagrams.
Furthermore, even when a Hopf subalgebra $\Uqk \subset \Uqg$ is available, there is no obvious counterpart for the underlying Lie subalgebra $\liek \subset \lieg$.

The general philosophy to overcome this problem is that Hopf subalgebras should be replaced by \emph{coideal subalgebras}, either one-sided or two-sided. These reduce to Hopf subalgebras in the case of $U(\lieg)$, due to cocommutativity, but are more general in the case of $\Uqg$.
For example, this point of view has been used to develop the theory of quantum symmetric spaces, culminating with the results of Letzter \cite{letzter, letzter-review}, in which she constructs a family of one-sided coideals subalgebras $U_q(\lieg^\theta)$ reducing in the classical limit to $U(\lieg^\theta)$, where $\lieg^\theta$ is the fixed-point subalgebra with respect to a maximally split involution $\theta$.
In a related direction, Heckenberger and Schneider \cite{heckenberger-schneider} classify the right coideals subalgebras of the Borel part of $\Uqg$ containing the Cartan part (we review this and related results later in the introduction).
The situation is less clear for a quantum analogue of a Lie subalgebra within $\Uqg$.
Despite various definitions having been proposed over the years, such as \cite{majid-lie-algebras}, \cite{delius-gould} and \cite{generalized-an}, there is still no universally agreed-upon candidate.

In this paper we consider the problem of quantizing the positive nilradical $\lien$ of a complex semisimple Lie algebra $\lieg$ of rank $r$, together with a fixed decomposition $\lien = \lien_1 \oplus \cdots \oplus \lien_r$ which we are going to discuss shortly. The goal is to produce a \emph{finite-dimensional} subspace $\qnil = \qnil_1 \oplus \cdots \oplus \qnil_r$ of $\Uqn$, the positive part of the quantized enveloping algebra, in such a way that $\bbC \oplus \qnil$ should be a coideal (we discuss the precise formulation later).
This should be an \emph{equivariant} quantization, in the sense that each summand $\qnil_k$ should be a module under the left adjoint action of an appropriate quantized Levi factor, in parallel with the classical situation.
Similarly, the requirements on $\qnil$ being finite-dimensional and a coideal should be considered as replacement for the existence of a Lie algebra structure on $\lien$.

One important motivation for this construction comes from the theory of covariant differential calculi on quantum groups and homogeneous spaces, introduced by Woronowicz in \cite{woronowicz}.
As we are going to show, each quantization $\qnil$ gives rise to covariant (first-order) differential calculi on a class of quantum flag manifolds.
Furthermore, this assignment is compatible with the decomposition of $\qnil$, in an appropriate sense to be discussed later.

\subsection{Setting}

The classical setting we want to consider is as follows.
Fix a set $\simpleroots = \{ \alpha_1, \cdots, \alpha_r \}$ of simple roots of $\lieg$.
Then there is a one-to-one correspondence
\[
\alpha_{i_1} < \cdots < \alpha_{i_r}
\quad \Longleftrightarrow \quad
S_1 \subset \cdots \subset S_r
\]
between total orders on the simple roots and increasing sequences of proper subsets of $\simpleroots$.
The increasing sequence is determined by removing the simple roots in the given order, while the total order is determined by the removal of the simple roots from the sequence.
More precisely, we have $S_k = S_{k + 1} \backslash \{\alpha_{i_k}\}$ for $k \in \{ 1, \cdots, r \}$ and $S_{r + 1} = \simpleroots$.
Denote by $\liel_k$ the Levi factor corresponding to the subset $S_k \subset \simpleroots$ as above, for any $k \in \{ 1, \cdots, r \}$.
Corresponding to any such choice, we have a decomposition of the positive nilradical $\lien \subset \lieg$ of the form
\[
\lien = \lien_1 \oplus \cdots \oplus \lien_r.
\]
Here each summand $\lien_k$ arises as the positive nilradical of a parabolic subalgebra corresponding to the inclusion $S_k \subset S_{k + 1}$ (for more details on this, see \cref{sec:decompositions}).
Furthermore, each summand $\lien_k$ is a $\liel_k$-module and is $\bbN$-graded by the simple root $\alpha_{i_k}$.

On the quantum side, we consider the quantized enveloping algebra $\Uqg$, which is a Hopf algebra with generators $\{ K_i^{\pm 1}, E_i, F_i \}_{i = 1}^r$ and suitable relations.
We write $\Uqn$ for the positive part with generators $\{ E_i \}_{i = 1}^r$, while we write $\Uqb$ for the non-negative (or Borel) part with generators $\{ K_i^{\pm 1}, E_i \}_{i = 1}^r$.
We note that the latter is a Hopf subalgebra, while the former is not.
We can also find Hopf subalgebras $\Uqlk \subset \Uqg$ corresponding to each Levi factor $\liel_k$ as above (since these arise from Dynkin subdiagrams of $\lieg$).
Finally, writing $\triangleright$ for the left adjoint action of $\Uqg$ on itself, we consider the following problem.

\begin{problem}
\label{prob:quantization}
Let $\lieg$ be a complex semisimple Lie algebra of rank $r$.
For any increasing sequence $S_1 \subset \cdots \subset S_r$ as above, find a finite-dimensional subspace
\[
\qnil = \qnil_1 \oplus \cdots \oplus \qnil_r \subset \Uqn
\]
such that each summand $\qnil_k$ satisfies the following properties:
\begin{itemize}
\item $\qnil_k$ is an $U_q(\liel_k)$-module under $\triangleright$, corresponding to the $\liel_k$-module $\lien_k$,
\item each graded component $\qnil_{k, n}$ is contained in the $n$-fold product of $\qnil_{k, 1}$.
\end{itemize}
\end{problem}

The first condition simply says that each $\qnil_k$ gives a quantization of $\lien_k$ with its action by the Levi factor $\liel_k$.
On the other hand, the second condition should be considered as a weak replacement for the Lie algebra structure on $\lien_k$ (this condition is going to be refined shortly).
We refer to any $\qnil \subset \Uqn$ as above as an \emph{equivariant quantization} of $\lien$ (with its fixed decomposition).
We observe that one could formulate a related problem by replacing $\Uqn$ with $\Uqb$, the non-negative part of $\Uqg$ quantizing the Borel subalgebra $\lieb$.

As we are going to discuss, this problem is not too difficult to solve using general results on the representation theory of $\Uqg$.
On the other hand, not all such equivariant quantizations correspond to $\lien$ upon specialization to the classical case.
To avoid this issue and to remedy the lack of uniqueness, we should impose further conditions on $\qnil$.

We note that the classical property $\Delta(\lien) \subset \lien \otimes 1 + 1 \otimes \lien$ inside $U(\lien)$ can be rephrased as saying that $\bbC \oplus \lien$ is a left (and right) $U(\lien)$-coideal.
Similarly, we require that $\bbC \oplus \qnil$ should be a coideal of some sorts. The precise problem we consider is as follows.

\begin{problem}
\label{prob:coideal}
With notation as above, find an equivariant quantization $\qnil = \qnil_1 \oplus \cdots \oplus \qnil_r$ such that each subspace $\bbC \oplus \qnil_k$ is a left $\Uqb$-coideal, that is
\[
\Delta(\bbC \oplus \qnil_k) \subset \Uqb \otimes (\bbC \oplus \qnil_k), \quad
k \in \{ 1, \cdots, r \}.
\]
\end{problem}

We note that here the coideal condition is formulated in terms of $\Uqb$ instead of $\Uqn$, in other words we must also include the Cartan part.
This is related to the fact that $\Uqn$ is not a Hopf subalgebra, as mentioned above. More concretely, for its generators $E_i$ we have the coproduct $\Delta(E_i) = E_i \otimes 1 + K_i \otimes E_i$ (in our conventions).
This also motivates the choice of working with left coideals as opposed to right coideals.

Finally we describe a related setting that motivates the results of this paper.
Corresponding to any sequence $S_1 \subset \cdots \subset S_r$, we have the generalized flag manifolds
\[
G / P_{S_1} \to \cdots \to G / P_{S_r},
\]
where the arrows denote appropriate quotient maps. We note that $G / P_{S_1}$ corresponds to the full flag manifold $G / B$. This situation can quantized to give
\[
\Cq[G / P_{S_r}] \subset \cdots \subset \Cq[G / P_{S_1}],
\]
where each \emph{quantum flag manifold} $\Cq[G / P_{S_k}]$ is defined as the subalgebra of the quantized coordinate ring $\Cq[G]$ invariant under the quantized Levi factor $\Uqlk$.

We would like to construct covariant (first-order) differential calculi over quantum homogeneous spaces of this type.
We note that covariance here refers to a quantum version of a group $G$ acting on the differential forms of a homogeneous space $G / K$.
The results of \cite{heko-homogeneous} give a characterization of covariant first-order differential calculi over a certain class of quantum homogeneous spaces, which includes the quantum flag manifolds as above.
Such $n$-dimensional calculi over $\calB$ correspond to certain $(n + 1)$-subspaces of $\calB^\circ$, where the latter denotes the dual coalgebra.
As we are going to show, each equivariant quantization $\qnil$ has exactly the right properties to produce such first-order differential calculi.

\subsection{Results}

\cref{prob:quantization} can be solved by making use of some well-known facts on the representation theory of $\Uqg$, which lead to the following result.

\begin{proposition}
\label{prop:intro-quantization}
Let $\lieg$ be a complex semisimple Lie algebra of rank $r$.
Let $S_1 \subset \cdots \subset S_r$ be an increasing sequence of proper subsets of $\simpleroots$, with $\lien = \lien_1 \oplus \cdots \oplus \lien_r$ the corresponding decomposition of the positive nilradical.

Then there exists a finite-dimensional subspace
\[
\qnil = \qnil_1 \oplus \cdots \oplus \qnil_r \subset \Uqn
\]
such that each summand $\qnil_k$ satisfies the following properties:
\begin{itemize}
\item $\qnil_k$ is an $U_q(\liel_k)$-module under $\triangleright$, corresponding to the $\liel_k$-module $\lien_k$,
\item each graded component $\qnil_{k, n}$ is contained in the $n$-fold product of $\qnil_{k, 1}$.
\end{itemize}
\end{proposition}

Furthermore, each summand $\qnil_k$ can be realized as a subspace of a twisted quantum Schubert cell as in \cite{zwicknagl}, which is a modification of the quantum Schubert cell introduced by De Concini, Kac and Procesi in \cite{dckp}.
This implies that each degree-one component $\qnil_{k, 1}$ consists of quantum root vectors (for some appropriate reduced decomposition).

The equivariant quantization $\qnil = \qnil_1 \oplus \cdots \oplus \qnil_r$ from \cref{prop:intro-quantization} is not uniquely determined, in general. However this is notably the case when all the simple components are of classical type, that is corresponding to the series $A_r$, $B_r$, $C_r$ and $D_r$.
Therefore extra conditions need to be imposed to obtain uniqueness and properly recover the positive nilradical $\lien$ in the classical limit, as we are going to do now.

In \cref{prob:coideal} we impose the additional condition that $\bbC \oplus \qnil$ should be a a coideal. This problem appears to be harder to solve, and we are not able to do so in full generality here.
However we can solve it by making an additional assumption on $\lieg$, namely that certain exceptional Lie algebras do not appear within its simple components.

\begin{theorem}
\label{thm:intro-coideal}
Let $\lieg$ be a complex semisimple Lie algebra of rank $r$, which does not contain components of type $F_4$, $E_7$, $E_8$.
Let $S_1 \subset \cdots \subset S_r$ be an increasing sequence of proper subsets of $\simpleroots$, with $\lien = \lien_1 \oplus \cdots \oplus \lien_r$ the corresponding decomposition of the positive nilradical.

Then there exists a unique finite-dimensional subspace
\[
\qnil = \qnil_1 \oplus \cdots \oplus \qnil_r \subset \Uqn
\]
such that each summand $\qnil_k$ satisfies the following properties:
\begin{itemize}
\item $\qnil_k$ is an $U_q(\liel_k)$-module under $\triangleright$, corresponding to the $\liel_k$-module $\lien_k$,
\item each graded component $\qnil_{k, n}$ is contained in the $n$-fold product of $\qnil_{k, 1}$,
\item $\bbC \oplus \qnil_k$ is a left $\Uqb$-coideal, that is $\Delta(\bbC \oplus \qnil_k) \subset \Uqb \otimes (\bbC \oplus \qnil_k)$.
\end{itemize}
\end{theorem}

The result we prove is slightly stronger than stated, namely we can quantize any graded component $\qnil_{k, n}$ up to $n = 3$ as left a $\Uqb$-coideal, including in the cases $F_4, E_7$ and $E_8$.
Hence it remains to check that this holds for components of higher degrees, which becomes more challenging.
We note that the most complicated case, corresponding to $E_8$, would require this verification up to degree six (which is the largest coefficient of a simple root in its highest root).
This provides some further evidence for the following conjecture.

\begin{conjecture}
The results of \cref{thm:intro-coideal} hold for any finite-dimensional complex semisimple $\lieg$, without any restriction on the simple components.
\end{conjecture}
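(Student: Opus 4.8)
\emph{Proof strategy for the conjecture.}
Since the decomposition $\lien = \lien_1 \oplus \cdots \oplus \lien_r$ and each of the three properties in \cref{thm:intro-coideal} are formulated summand by summand, and a general semisimple $\lieg$ is the direct product of its simple components with the whole construction respecting this product, the conjecture is equivalent to \cref{thm:intro-coideal} for $\lieg$ simple of type $F_4$, $E_7$ or $E_8$. For such a $\lieg$ the summand $\qnil_k$ depends only on the pair $S_k \subset S_{k+1}$ with $S_{k+1} \setminus S_k = \{\alpha_{i_k}\}$: classically $\lien_k$ is the positive nilradical of the parabolic of $\lieg_{S_{k+1}}$ with Levi $\liel_{S_k}$, it is $\liel_k$-irreducible in each $\alpha_{i_k}$-degree, and its top degree $m_k$ is the coefficient of $\alpha_{i_k}$ in the highest root of $\lieg_{S_{k+1}}$. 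By \cref{prop:intro-quantization} an equivariant quantization always exists, and the proof of \cref{thm:intro-coideal} already settles the coideal condition for $\qnil_{k,n}$ in all degrees $n \le 3$. Hence only the finitely many pairs with $m_k \ge 4$ remain, and for each only the components of degree $4 \le n \le m_k$: inspecting the highest roots, these are $F_4$ with $\alpha_{i_k}$ the node of coefficient $4$ (degree up to $4$), $E_7$ with $\alpha_{i_k}$ of coefficient $4$ (degree up to $4$), and $E_8$ with $\alpha_{i_k}$ of coefficient $4$, $5$ or $6$ (degree up to $6$).

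The key reduction for these remaining verifications is that it suffices to test the coideal condition on $\Uqlk$-highest weight generators of $\qnil_k$. Indeed $\bbC \oplus \qnil_k$ is $\Uqlk$-stable under $\triangleright$ by the first property, and if $x \in \bbC \oplus \qnil_k$ satisfies $\Delta(x) \in \Uqb \otimes (\bbC \oplus \qnil_k)$, then so does $u \triangleright x$ for any $u \in \Uqlk$: writing $\Delta(x) = \sum_i a_i \otimes c_i$ with $c_i \in \bbC \oplus \qnil_k$ and using $\Delta(u \triangleright x) = \sum_{(u)} u_{(1)} a_i S(u_{(4)}) \otimes u_{(2)} c_i S(u_{(3)})$ together with the explicit coproducts of the generators $K_j^{\pm 1}, E_j, F_j$ ($j \in S_k$) of $\Uqlk$, the right tensor legs reorganize into elements of the form $w \triangleright c_i$ with $w \in \Uqlk$, together with their Cartan conjugates, all of which lie in $\bbC \oplus \qnil_k$. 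Thus $\{x : \Delta(x) \in \Uqb \otimes (\bbC \oplus \qnil_k)\}$ is itself $\Uqlk$-stable, so it is enough to check $\Delta(v) \in \Uqb \otimes (\bbC \oplus \qnil_k)$ for a set of $v$ generating $\qnil_k$ under $\triangleright$. As each $\qnil_{k,n}$ is $\Uqlk$-irreducible this means one vector per degree, which may be taken to be the quantum root vector $E_{\gamma_n}$ of the largest root $\gamma_n$ of $\lieg_{S_{k+1}}$ of $\alpha_{i_k}$-degree $n$ (for $n = m_k$ the highest root of $\lieg_{S_{k+1}}$).

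To run the test $\Delta(E_{\gamma_n}) \in \Uqb \otimes (\bbC \oplus \qnil_k)$ one works inside the twisted quantum Schubert cell of \cite{zwicknagl} provided by \cref{prop:intro-quantization}, in which $\qnil_k$ carries an explicit basis built from the degree-one quantum root vectors and $\Delta$ restricted to the cell is computable through the skew-derivation / $r$-matrix formulas. Writing $\Delta(E_{\gamma_n}) = E_{\gamma_n} \otimes 1 + K_{\gamma_n} \otimes E_{\gamma_n} + (\text{middle terms})$ and sorting the middle terms by the $\alpha_{i_k}$-degree of the right tensor factor, the condition becomes a finite linear system over $\bbC(q)$ expressing that every right leg of $\Delta(E_{\gamma_n})$ lies in $\bbC \oplus \bigoplus_j \qnil_{k,j}$; since $\Delta(\Uqn) \subseteq \Uqb \otimes \Uqn$, no Cartan part intervenes on the right. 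I would organize this in the canonical basis: the degree-one quantum root vectors are canonical basis elements up to the harmless rescalings of the twist, and once one shows that all of $\qnil_k$ is spanned by a subset $\mathcal{B}_k$ of the canonical basis of $\Uqn$, Lusztig's comultiplication formula on the canonical basis --- with its positivity of structure constants --- reduces the coideal condition to the purely combinatorial claim that for each $b \in \mathcal{B}_k$ every right factor occurring in $r(b)$ lies in $\mathcal{B}_k \cup \{1\}$. This reformulation makes no reference to the type, and the hope is to derive it uniformly from a description of $\mathcal{B}_k$ as (a translate of) the crystal of the $\liel_k$-module $\lien_k$ together with the convexity of the underlying reduced word --- which would prove the conjecture for all types at once --- and, failing that, to verify it by a finite crystal computation in each of the exceptional cases listed above.

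The main obstacle is precisely this combinatorial/computational core at degrees $5$ and $6$, which occur only for $E_8$ (with $\alpha_{i_k}$ of coefficient $5$ or $6$), where the interplay between the twisted multiplication of the Schubert cell and the canonical basis is most delicate and $\qnil_{k,n}$ is a proper and somewhat subtle $\Uqlk$-submodule of the $n$-fold product of $\qnil_{k,1}$. A prerequisite that also has to be handled with care is the compatibility of $\qnil_k$ with the canonical basis in the presence of the twist, namely that the twist does not mix canonical basis elements across different summands $\qnil_j$, so that the combinatorial reformulation is legitimate; should this fail one must fall back on brute-force computation, which is finite --- and in fact light in the extreme degrees, where $\qnil_{k,m_k}$ is generated by the single highest root vector --- but heavy in the intermediate ones. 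Finally, once existence of a coideal quantization is established in these cases, uniqueness follows exactly as in the non-exceptional situation, since the coideal condition at degree $n$ forces $\qnil_{k,n}$ to equal a prescribed $\Uqlk$-submodule of the $n$-fold product of $\qnil_{k,1}$.
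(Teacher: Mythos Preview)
The statement is an open conjecture; the paper does not prove it. After \cref{thm:coideal-quantizations} the paper only indicates that the natural approach would be to push the degree-three machinery of \cref{sec:coideal} (the transmutation map $\tmapS$ together with the quantum shuffle formula of \cref{prop:shuffle-product}) up to degree six, with the main obstacle being control of the multiplicity of $\lien_{k,n}$ inside $\lien_{k,1}^{\otimes n}$ beyond the case-by-case check of \cref{lem:degree-three-multiplicity}.

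Your reductions to the simple types $F_4$, $E_7$, $E_8$, to graded degrees $n \ge 4$, and to a single $\Uqlk$-generator per degree are all sound; the last is essentially the identity $\Delta(u \triangleright x) = u \blacktriangleright \Delta(x)$ used throughout \cref{sec:coideal}. The genuine gap is the assertion that this generator ``may be taken to be the quantum root vector $E_{\gamma_n}$'' for the largest root of degree $n$. This is already false in the $G_2$ example of \cref{sec:example-G2}: there $\qnil_{2,2}$ is the trivial $U_q(\liel_2)$-module, but $E_{\alpha_2} \triangleright E_{\beta_4} = [E_{\beta_1}, E_{\beta_4}]_q = (q^3 - q^{-1}) E_{\beta_2}^{(2)} \neq 0$, so $E_{\beta_4}$ is not a highest weight vector and does not lie in $\qnil_{2,2}$; the actual generator $X_{\beta_4}$ is a nontrivial PBW combination. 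More fundamentally, for $n \ge 3$ the component $\qnil_{k,n}$ is not fixed by the first two conditions of \cref{def:equivariant-quantization}; the content of the conjecture is to \emph{construct} the correct $\qnil_{k,n}$ so that the coideal condition holds, not to verify it for a predetermined choice, and your proposal never says how this choice is made in degree $\ge 4$. The paper's degree-three argument (\cref{prop:coideal-degree-three}) shows that the answer is singled out by a condition on the braiding acting on $(\qnil_{k,1})^{\otimes n}$, not by root-vector data alone.

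Your canonical-basis reformulation is an interesting alternative to the paper's shuffle-product route, but it hinges on the unproved hypothesis that $\qnil_k$ is spanned by canonical basis elements. The explicit $G_2$ output (for instance $X_{\beta_5}$ as a three-term PBW sum with coefficients involving $(q - q^{-1})$) makes this compatibility far from evident, so it would itself need to be established before the combinatorial strategy could be applied.
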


Next we discuss, given an equivariant quantization $\qnil$ as above, how to construct covariant first-order differential calculi on quantum flag manifolds.
As previously mentioned, the results of \cite{heko-homogeneous} give a way to produce such finite-dimensional calculi over a quantum homogeneous space in terms of certain subspaces of its dual coalgebra.
Consider the case of a quantum flag manifold $\Cq[G / P_{S_k}]$, defined by invariance under the Hopf subalgebra $\Uqlk$.
Then we obtain a differential calculus by producing a finite-dimensional subspace $T \subset \Uqg$ which contains the unit, is a right $\Uqg$-coideal, and is invariant under the left adjoint action of $\Uqlk$.

These are exactly the properties satisfied by $\bbC \oplus \qnil$, except that it is a left coideal.
We can easily pass to right coideals by using the quantum analogue of the Chevalley involution, which is a Hopf algebra isomorphism $\omega: \Uqg \to \Uqg^{\mathrm{cop}}$, where the latter denotes $\Uqg$ with opposite coproduct.
Furthermore we consider $\Uqg$ as a Hopf $*$-algebra, with $*$-structure corresponding to the compact real form.
Then we have the following result.

\begin{theorem}
\label{thm:intro-calculi}
Under the same assumptions of \cref{thm:intro-coideal}, consider the unique equivariant quantization $\qnil = \qnil_1 \oplus \cdots \oplus \qnil_r$ corresponding to $S_1 \subset \cdots \subset S_r$. Write
\[
\tang_- = \bbC \oplus \omega(\qnil), \quad
\tang_+ = \bbC \oplus \omega(\qnil)^*.
\]
Then we have the following:
\begin{enumerate}
\item $\tang_-$ and $\tang_+$ give left-covariant first-order differential calculi over the quantum group $\CqG$ and any quantum flag manifold $\Cq[G / P_{S_k}]$ with $k \in \{ 1, \cdots, r \}$,
\item if the quantum flag manifold $\Cq[G / P_{S_k}]$ is irreducible, these calculi coincide with those introduced by Heckenberger and Kolb in \cite{heko}.
\end{enumerate}
\end{theorem}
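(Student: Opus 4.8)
The strategy is to reduce both assertions to known correspondences between covariant first-order differential calculi and subspaces of the dual coalgebra, and then to feed in the structure of $\qnil$ established above. Over the quantum group $\CqG$ this is Woronowicz's classification \cite{woronowicz}: finite-dimensional left-covariant first-order differential calculi over $\CqG$ are in bijection with finite-dimensional right $\Uqg$-coideals containing the unit. Over a quantum flag manifold $\Cq[G / P_{S_k}]$ one uses the characterization of \cite{heko-homogeneous} recalled in the introduction: such calculi correspond to finite-dimensional subspaces $T \subset \Uqg$ which contain the unit, are right $\Uqg$-coideals, and are invariant under the left adjoint action $\triangleright$ of $\Uqlk$. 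So the plan is first to verify these properties for $\tang_-$ and $\tang_+$, which yields item (1) over $\CqG$; then to exhibit, inside $\tang_\pm$, the $\Uqlk$-invariant subspace governing the restriction of the calculus to each $\Cq[G / P_{S_k}]$; and finally to identify the outcome with the calculus of Heckenberger--Kolb in the irreducible case.

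I would begin from $\bbC \oplus \qnil$. By \cref{thm:intro-coideal} this subspace is finite-dimensional, contains the unit, and is a left $\Uqb$-coideal; since $\Uqb \subseteq \Uqg$ it is in particular a left $\Uqg$-coideal. The quantum Chevalley involution $\omega \colon \Uqg \to \Uqg^{\mathrm{cop}}$ is a Hopf algebra isomorphism fixing the unit, so it sends left coideals to right coideals; hence $\tang_- = \bbC \oplus \omega(\qnil)$ is a finite-dimensional right $\Uqg$-coideal containing the unit. Since $\Uqg$ is a Hopf $*$-algebra for the compact real form, the relation $\Delta(x^*) = \sum x_{(1)}^* \otimes x_{(2)}^*$ shows that $*$ likewise preserves the right-coideal property while fixing the unit; as $\tang_+ = \tang_-^*$, it too has these three properties. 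By Woronowicz this establishes the $\CqG$-part of item (1), with $*$ exchanging $\tang_-$ and $\tang_+$.

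For the flag manifolds, the key observation is that although $\bbC \oplus \qnil$ itself is only $\Uqh$-invariant, being weight-graded, its tails $\bbC \oplus \qnil_k \oplus \cdots \oplus \qnil_r$ are $\Uqlk$-invariant under $\triangleright$: each $\qnil_j$ with $j \ge k$ is a $\Uqlj$-module by \cref{thm:intro-coideal}, and $\Uqlk \subseteq \Uqlj$ because $S_k \subseteq S_j$, so $\qnil_j$ restricts to a $\Uqlk$-module; each such tail is moreover a left $\Uqb$-coideal, being a sum of the left $\Uqb$-coideals $\bbC \oplus \qnil_j$. This is the quantum counterpart of the classical fact that $\bbC \oplus \lieu_{S_k}$ is a $\liel_k$-submodule, where $\lieu_{S_k} = \lien_k \oplus \cdots \oplus \lien_r$ is the nilradical of $\liep_{S_k}$. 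Transporting these tails along $\omega$, and along $\omega$ followed by $*$, produces finite-dimensional right $\Uqg$-coideals which contain the unit and are $\Uqlk$-invariant under $\triangleright$; by \cite{heko-homogeneous} each defines a left-$\CqG$-covariant first-order differential calculus over $\Cq[G / P_{S_k}]$, reducing for $k = 1$ to the calculus of $\tang_\pm$ over $\Cq[G / B]$. I would then check that for general $k$ this calculus is the restriction to the subalgebra $\Cq[G / P_{S_k}] \subset \CqG$ of the calculus defined on $\CqG$ by $\tang_\pm$: the invariance defining $\Cq[G / P_{S_k}]$ kills exactly the directions coming from $\qnil_j$ with $j < k$ upon differentiation, so the bimodule generated by $\diff \Cq[G / P_{S_k}]$ is governed by the tail above. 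This is the precise sense in which $\tang_\pm$ give calculi over every $\Cq[G / P_{S_k}]$, compatibly with the decomposition of $\qnil$.

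For item (2), irreducibility of $\Cq[G / P_{S_k}]$ forces the nilradical $\lieu_{S_k}$ to be abelian, whence the relevant tail coincides with its degree-one part, which by \cref{prop:intro-quantization} consists of quantum root vectors. The induced calculus then has ``holomorphic'' and ``anti-holomorphic'' parts spanned by the images of these quantum root vectors under $\omega$ and under $\omega$ composed with $*$, and has dimension $\dim \lieu_{S_k} = \dim_{\bbC} G / P_{S_k}$. I expect this identification to be the main obstacle. The most economical route is to invoke the Heckenberger--Kolb classification \cite{heko}, by which an irreducible quantum flag manifold admits, up to the $*$-operation, exactly two non-trivial left-covariant first-order differential calculi of this dimension; it then suffices to check that the two calculi produced from $\tang_-$ and $\tang_+$ have the expected dimension $\dim \lieu_{S_k}$ and are exchanged by $*$, whence they must be the two Heckenberger--Kolb calculi. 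Making this rigorous requires matching the quantum root vectors in the degree-one component with those appearing explicitly in \cite{heko}, which is the one genuinely computational step.
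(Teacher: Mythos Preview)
Your proposal is correct and follows essentially the same strategy as the paper: translate the coideal property of $\bbC \oplus \qnil$ through $\omega$ and $*$ to obtain right coideals, then invoke the tangent-space characterization of covariant FODCs from \cite{heko-homogeneous}. Two points are handled slightly differently.

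For the $\Uqlk$-invariance over $\Cq[G/P_{S_k}]$, you work with the \emph{tails} $\bbC \oplus \qnil_k \oplus \cdots \oplus \qnil_r$, which are already $\Uqlk$-stable in $\Uqg$, and then argue separately that these give the same calculus as the full $\tang_\pm$ restricted to $\Cq[G/P_{S_k}]$. The paper instead keeps the full $\bbC \oplus \qnil$ and observes that for $j < k$ one has $\qnil_j \subset \Uqlk$ with $\varepsilon(\qnil_j) = 0$, so these summands vanish in $\calB^\circ$; hence the image of $\tang_\pm$ in $\calB^\circ$ is $\Uqlk$-invariant directly, and no separate restriction argument is needed. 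Your route is conceptually clean but adds a step; the paper's is shorter.

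For item (2), note that only $k = r$ can be irreducible (since $|S_k| = k-1$), so the relevant tail is just $\qnil_r$. You propose to appeal to the Heckenberger--Kolb uniqueness result and match dimensions; the paper instead identifies $\omega(\qnil_r)$ explicitly with $\mathrm{span}\{F_{\xi_1}, \dots, F_{\xi_n}\}$, which is literally the tangent space of $\Gamma_\partial$ in \cite{heko}, using that $\qnil_r = \qnil_{r,1}$ consists of quantum root vectors and that $\omega$ exchanges $E_\beta$ with $F_\beta$ up to scalars. For $\tang_+$ the paper uses that the Heckenberger--Kolb calculus is a $*$-calculus, so its tangent space is $*$-stable. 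Your classification argument would also work, but the direct identification is more concrete and avoids checking irreducibility of the resulting calculus.
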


Here $\tang_-$ and $\tang_+$ should be thought of as corresponding to the holomorphic and anti-holomorphic parts of a differential calculus, since all flag manifolds are complex manifolds.
We recall that, in the case of a quantum \emph{irreducible} flag manifold $\Cq[G / P_S]$, Heckenberger and Kolb construct in \cite{heko} a canonical differential calculus that shares many features of its classical counterpart, including this type of decomposition.
Here canonical refers to the fact there exist only two irreducible left-covariant first-order calculi on $\Cq[G / P_S]$, corresponding to the holomorphic and anti-holomorphic parts as above, and one considers their direct sum.
This should be contrasted with the general situation of a quantum homogeneous space, where the choice of a differential calculus appears to be highly non-canonical.
For the quantum full flag manifolds of type $A$, generalizations of the Heckenberger-Kolb calculi were recently constructed in \cite{reamonn-ar} (where the the extension to higher-order forms is also studied).

We conclude this part with some remarks on covariant differential calculi on quantum groups and quantum homogeneous spaces, inspired by \cref{thm:intro-calculi}.
This result suggests that the choice of a differential calculus on $\CqG$ should be \emph{adapted} to a sequence
\[
\Cq[G / P_{S_r}] \subset \cdots \subset \Cq[G / P_{S_1}]
\]
of quantum flag manifolds, corresponding to the increasing sequence $S_1 \subset \cdots \subset S_r$ of $\simpleroots$.
This in general will not be enough to single out a unique candidate for a differential calculus, since it should be possible to twist the previous construction by Cartan elements.
This would correspond to considering \cref{prob:quantization} and \cref{prob:coideal} within the Borel part $\Uqb$, as opposed to the positive part $\Uqn$.
However this difference should disappear at the level of quantum flag manifolds, since each quantized Levi factor $\Uqlk$ contains the Cartan part.
We expect that this could be leveraged to gain uniqueness in this setting.

\subsection{Existing literature}

We now come back to the first part of the introduction, by giving a quick overview of some related results existing in the literature.

The problem of classifying (right) coideal \emph{subalgebras} of $\Uqb$ containing $\Uqh$ has been studied extensively. This was first solved for $A_r$ in \cite{coideal-Ar}, then later on for $B_r$ in \cite{coideal-Br} and for $G_2$ in \cite{coideal-G2}, before a general classification was obtained by Heckenberger and Schneider in \cite{heckenberger-schneider}.
The result of their classification is that such coideals are in one-to-one correspondence with elements of the Weyl group $W$ by the map $w \mapsto \schub(w) \Uqh$, where $\schub(w)$ denotes the quantum Schubert cell corresponding to $w$ as above and $\Uqh$ is the Cartan part.
Furthermore this map is order-preserving, where $W$ is considered with the Duflo order and the coideal subalgebras are ordered by inclusion.
We should also mention \cite{heckenberger-kolb}, where coideals of $\Uqb$ satisfying a more general condition are also classified.

Comparing the results of our paper to those cited above, we see two main differences: 1) we are concerned with finite-dimensional coideals as opposed to coideal subalgebras, 2) we consider such coideals together with the action of appropriate quantized Levi factors.

Another result that should be mentioned in this context is the determination of the locally-finite part of $\Uqg$ under the left adjoint action, due to Joseph and Letzter \cite{joseph-letzter}.
They show that it consists of finite-dimensional blocks $B(\lambda)$, for each dominant weight $\lambda$, which are invariant under the left adjoint action of $\Uqg$.
Each block $B(\lambda)$ is a left $\Uqg$-coideal but is not irreducible in general (it is isomorphic to $V(\lambda)^* \otimes V(\lambda)$).
It appears to be non-trivial to use the results of \cite{joseph-letzter} for the problems considered in our paper, since we are interested in the locally-finite part of $\Uqn$ under $\UqlS$, for various choices of $S \subset \simpleroots$.

\subsection{Organization}

We conclude this introduction by outlining the organization of this paper.
In \cref{sec:classical-preliminaries} we review some classical results, in particular regarding the decompositions $\lien = \lien_1 \oplus \cdots \oplus \lien_r$.
Similarly, in \cref{sec:quantum-preliminaries} we review some of the corresponding preliminaries in the quantum case.
In \cref{sec:equivariant-quantizations} we prove the existence of equivariant quantizations as in \cref{prob:quantization}, and in \cref{sec:schubert-cells} we discuss the relation between this construction and quantum Schubert cells, or more precisely their twisted versions.
In \cref{sec:quasitriangular} we discuss some general results on quasitriangular Hopf algebras that are used in the next section.
In \cref{sec:coideal} we prove that there is a unique choice of $\qnil$ in such a way that $\bbC \oplus \qnil$ is a left $\Uqb$-coideal, under the previously discussed assumptions on the simple components, solving \cref{prob:coideal} in this case.
In \cref{sec:example-G2} we illustrate these results in the case of the exceptional Lie algebra $G_2$.
In \cref{sec:covariant-calculi} we discuss how to obtain covariant calculi on quantum flag manifold from the equivariant quantizations introduced here.
Finally in \cref{sec:appendix-computations} we prove a certain result on triple tensor products, which is needed in the proof of the coideal property.

\subsection*{Acknowledgments}

We are grateful to István Heckenberger for some interesting discussions related to the contents of this paper.
We would also like to thank Kenny De Commer and Réamonn Ó Buachalla for their comments and questions.

\section{Classical preliminaries}
\label{sec:classical-preliminaries}

In this section we recall some known results on parabolic subalgebras and decompositions in the Weyl group.
We also discuss in detail how to obtain the decomposition $\lien_1 \oplus \cdots \oplus \lien_r$ corresponding to $S_1 \subset \cdots \subset S_r$ mentioned in the introduction.

\subsection{Parabolic subalgebras}

Let $\lieg$ be a complex semisimple Lie algebra.
A choice of Cartan subalgebra $\lieh \subset \lieg$ determines a root system $\roots \subseteq \lieh^*$.
This can be decomposed into positive and negative roots as $\roots = \roots^+ \sqcup \roots^-$.
For any root $\alpha \in \roots$ we denote by $\lieg_\alpha \subset \lieg$ the corresponding root space.
Write $\simpleroots = \{ \alpha_1, \cdots, \alpha_r \}$ for the set of simple roots, so that $\lieg$ has rank $r$.

Any subset $S \subseteq \simpleroots$ determines a \emph{parabolic subalgebra} $\liep_S$ as follows. First set
\[
\roots(\liel_S) := \lspan(S) \cap \roots, \quad
\roots(\lien_S) := \roots^+ \backslash \roots(\liel_S).
\]
In terms of these roots we define
\[
\liel_S := \lieh \oplus \bigoplus_{\alpha \in \roots(\liel_S)} \lieg_\alpha, \quad
\lien_S := \bigoplus_{\alpha \in \roots(\lien_S)} \lieg_\alpha, \quad
\liep_S := \liel_S \oplus \lien_S.
\]
Then $\liel_S$, $\lien_S$ and $\liep_S$ are Lie subalgebras of $\lieg$.
We refer to $\liel_S$ as the \emph{Levi factor} and to $\lien_S$ as the (positive) \emph{nilradical} of the parabolic subalgebra $\liep_S$.

\subsection{Decompositions}
\label{sec:decompositions}

Consider an increasing sequence
\[
S_1 \subset S_2 \subset \cdots \subset S_{r - 1} \subset S_r \subset \simpleroots
\]
of proper subsets of the simple roots $\simpleroots$.
We have that each subset $S_k$ has cardinality $k - 1$, since $|\Pi| = r$.
For convenience we also write $S_{r + 1} = \simpleroots$ in the following.
Let us adopt the shorthand notation $\liel_k := \liel_{S_k}$ for the Levi factor corresponding to the subset $S_k \subseteq \simpleroots$.
Then we have an increasing sequence of reductive Lie algebras
\begin{equation}
\label{eq:levi-factor-inclusion}
\lieh = \liel_1 \subset \liel_2 \subset \cdots \subset \liel_r \subset \liel_{r + 1} = \lieg.
\end{equation}

Let $\lieg_k := [\liel_{S_k}, \liel_{S_k}]$ be the semisimple part of the Levi factor $\liel_{S_k}$.
In other words, $\lieg_k \subseteq \lieg$ is the semisimple Lie algebra obtained from the Dynkin subdiagram corresponding to $S_k \subseteq \simpleroots$.
Then we have an increasing sequence of complex semisimple Lie algebras
\begin{equation}
\label{eq:lie-algebra-inclusion}
\{0\} = \lieg_1 \subset \lieg_2 \subset \cdots \subset \lieg_r \subset \lieg_{r + 1} = \lieg.
\end{equation}

Since the set $S_{k + 1}$ can be identified with the simple roots of $\lieg_{k + 1}$, we see that the inclusion $S_k \subset S_{k + 1}$ defines a parabolic subalgebra of $\lieg_{k + 1}$.
The semisimple part of its Levi factor is exactly $\lieg_k$.
Then we denote by $\lien_k \subset \lieg_{k + 1}$ the corresponding positive nilradical (note the shift by one in the notation).
We also observe that, in general, this is not the same as the positive nilradical corresponding the inclusion $S_k \subset \simpleroots$ for the Lie algebra $\lieg$.

Then, with notation as above, we have the decomposition
\[
\lien = \lien_1 \oplus \cdots \oplus \lien_r
\]
corresponding to the sequence $S_1 \subset \cdots \subset S_r$.
Each nilradical $\lien_k$ is a $\liel_k$-module, so we have an equivariant decomposition with respect to these actions.

\begin{example}
To illustrate the setting above, consider $\lieg = A_3 = \mathfrak{sl}_4$ with the usual numbering for the simple roots.
Consider the sequence of proper subsets given by
\[
S_1 = \emptyset, \quad
S_2 = \{ \alpha_1 \}, \quad
S_3 = \{ \alpha_1, \alpha_2 \}.
\]
Then the Lie subalgebras $\liel_k$ and $\lieg_k$ are given by
\begin{align*}
\liel_1 & = \lieh, \quad &
\liel_2 & = \lieh \oplus \lspan \{ e_{\alpha_1}, f_{\alpha_1} \}, \quad &
\liel_3 & = \lieh \oplus \lspan \{ e_{\alpha_1}, e_{\alpha_2}, e_{\alpha_1 + \alpha_2}, f_{\alpha_1}, f_{\alpha_2}, f_{\alpha_1 + \alpha_2} \}, \\
\lieg_1 & = \{ 0 \}, \quad &
\lieg_2 & = \lspan \{ h_{\alpha_1}, e_{\alpha_1}, f_{\alpha_1} \}, \quad &
\lieg_3 & = \lspan \{ h_{\alpha_1}, h_{\alpha_2}, e_{\alpha_1}, e_{\alpha_2}, e_{\alpha_1 + \alpha_2}, f_{\alpha_1}, f_{\alpha_2}, f_{\alpha_1 + \alpha_2} \}.
\end{align*}
The corresponding decomposition $\lien = \lien_1 \oplus \lien_2 \oplus \lien_3$ is given by
\[
\lien_1 = \lspan \{ e_{\alpha_1} \}, \quad
\lien_2 = \lspan \{ e_{\alpha_2}, e_{\alpha_1 + \alpha_2} \}, \quad
\lien_3 = \lspan \{ e_{\alpha_3}, e_{\alpha_2 + \alpha_3}, e_{\alpha_1 + \alpha_2 + \alpha_3} \}.
\]
\end{example}

\subsection{Gradings}

As observed in the introduction, an increasing sequence $S_1 \subset \cdots \subset S_r$ as above corresponds to a total order $\alpha_{i_1} < \cdots < \alpha_{i_r}$ on the simple roots.
This is determined by the equality $S_k = S_{k + 1} \backslash \{ \alpha_{i_k} \}$ for $k \in \{ 1, \cdots, r \}$, where we write $S_{r + 1} = \simpleroots$.

Consider the corresponding decomposition $\lien = \lien_1 \oplus \cdots \oplus \lien_r$.
Then each summand $\lien_k$ is $\bbN$-graded by the simple root $\alpha_{i_k}$. Here a positive root $\alpha \in \roots^+(\lien_k)$ has degree $n$ if the simple root $\alpha_{i_k}$ appears in $\alpha$ with coefficient $n$.
We write this decomposition as
\[
\lien_k = \bigoplus_{n = 1}^{N_k} \lien_{k, n},
\]
where $\lien_{k, n}$ denotes the component of degree $n$.
We note that here $N_k$ corresponds to the maximal coefficient of $\alpha_{i_k}$ appearing in any root of $\lien_k$.

Furthermore, each $\lien_{k, n}$ is an $\liel_k$-module with respect to the adjoint action, since the simple root $\alpha_{i_k}$ does not appear in $\roots(\liel_k)$.
The following result gives more properties concerning these modules, which are going to be used in the quantum setting as well.

\begin{proposition}
\label{prop:classical-module}
The $\liel_k$-modules $\lien_{k, n}$ satisfy the following properties:
\begin{enumerate}
\item Each $\lien_{k, n}$ is a simple $\liel_k$-module.
\item For $n \leq N_k$, the simple component $\lien_{k, n}$ occurs in $\lien_{k, 1}^{\otimes n}$.
\item The tensor product $\lien_{k, m} \otimes \lien_{k, n}$ has a multiplicity-free decomposition.
\end{enumerate}
\end{proposition}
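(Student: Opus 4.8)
I would prove the three statements in the stated order, using the identification of $\lien_k$ with the positive nilradical of a parabolic in $\lieg_{k+1}$ whose Levi semisimple part is $\lieg_k$, together with the fact that $\lien_{k,n}$ is the degree-$n$ piece under the grading by the simple root $\alpha_{i_k}$.

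\emph{Part (1): simplicity.} The key observation is that the abelian subalgebra $\lien_{k,N_k}$ (the top graded piece) is always contained in the highest-degree part, and more usefully that $\lien_{k,1}$ generates $\lien_k$ as a Lie algebra (since $\lien_k$ is generated in degree one: every positive root of $\lien_k$ is a sum of the simple root $\alpha_{i_k}$ with roots in $\liel_k\cup\{0\}$, and one climbs up by bracketing with $\lieg_k$-root vectors starting from $e_{\alpha_{i_k}}$). The cleanest route, however, is the standard fact about parabolic nilradicals: if $\liep = \liel\oplus\lien$ is a parabolic in a semisimple Lie algebra with $\liel$ reductive, then $\lien$ decomposes under the grading element (here the fundamental coweight dual to $\alpha_{i_k}$, restricted to $\lieg_{k+1}$) into pieces $\lien_{(n)}$ each of which is an \emph{irreducible} $\liel$-module. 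This is classical (see e.g. Kostant, or Baston–Eastwood); the highest weight of $\lien_{(n)}$ is the highest root of $\lieg_{k+1}$ having coefficient $n$ at $\alpha_{i_k}$ with the $\alpha_{i_k}$-coefficient... more precisely, $\lien_{(n)}$ has a unique highest weight vector because $[\lieg_{k,+},\lien_{(n)}]$ has a one-dimensional cokernel in each weight, which follows from examining which roots of $\roots^+(\lieg_k)$ can be subtracted. I would cite this rather than reprove it, since it is well documented, and briefly recall why the grading pieces are $\liel_k$-stable (because $\alpha_{i_k}\notin\roots(\liel_k)$, bracketing with $\liel_k$ preserves the $\alpha_{i_k}$-coefficient).

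\emph{Part (2): $\lien_{k,n}$ occurs in $\lien_{k,1}^{\otimes n}$.} Since $\lien_{k,1}$ generates $\lien_k$ as a Lie algebra and the bracket respects the grading additively, the iterated bracket gives a surjection of $\liel_k$-modules $\lien_{k,1}^{\otimes n}\twoheadrightarrow \lien_{k,n}$ for $n\le N_k$ (surjectivity because $\lien_k=\bigoplus_m \lien_{k,m}$ is generated in degree one, so the degree-$n$ part is spanned by $n$-fold brackets of degree-one elements, which are images of elements of $\lien_{k,1}^{\otimes n}$ under the bracket map $x_1\otimes\cdots\otimes x_n\mapsto [x_1,[x_2,[\cdots,x_n]]]$). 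Since $\lien_{k,n}$ is simple by Part (1), being a quotient of $\lien_{k,1}^{\otimes n}$ is the same as occurring as a summand (complete reducibility for the reductive $\liel_k$). So this part is short once Part (1) is in hand; the only thing to check is the generation-in-degree-one claim, which again is the standard statement that the nilradical of a parabolic is generated by its lowest graded piece — I would give the one-line root-theoretic argument.

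\emph{Part (3): multiplicity-freeness of $\lien_{k,m}\otimes\lien_{k,n}$.} This is the part I expect to be the main obstacle, since it is genuinely a statement about tensor product decompositions and not just about generation. The strategy I would use: the modules $\lien_{k,j}$ are the graded pieces of an abelian (or at least 2-step / heisenberg-type, but here \emph{abelian} because it arises as a parabolic nilradical that is itself... careful: $\lien_k$ need not be abelian) nilradical; but crucially $\bbC\oplus\lien_k$ or rather $\lien_k$ sits inside $\lieg_{k+1}$, and there is a classical result (again Kostant, or the theory of ``cominuscule-type'' gradings when $N_k=1$, and more generally results on parabolic nilradicals) that consecutive and non-consecutive graded pieces tensor multiplicity-freely. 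The honest approach is probably: these modules $\lien_{k,j}$ are exactly those appearing in a \emph{$|1|$-graded} or suitably short-graded setting, or one invokes that $\lien_{k,m}\otimes\lien_{k,n}$ embeds in $U(\lien_k)$ (or in $\lien_{k,1}^{\otimes(m+n)}$) whose graded pieces are multiplicity-free — but $U(\lien_k)_{m+n}$ need not be multiplicity-free in general, so that is too weak. Instead I would argue case-by-case along the classification: the decomposition $\lien=\bigoplus\lien_k$ and each $\lien_k$ corresponds to removing one node from the Dynkin diagram of $\lieg_{k+1}$, so $\lien_{k,j}$ are the graded pieces of a maximal parabolic nilradical; these are completely classified, and their pairwise tensor products being multiplicity-free can be verified from the explicit highest weights (for the classical series $A,B,C,D$ and exceptionals) using e.g. the Parthasarathy–Ranga Rao–Varadarajan / Kostant dimension or weight-multiplicity criterion, or Stembridge's classification of multiplicity-free tensor products. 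Alternatively, and more slickly, one uses that $\lien_{k,m}$ has highest weight $m\varpi_{i_k}$-shifted... Let me instead lean on: each $\lien_{k,j}$ is a ``small'' module (a so-called \emph{minuscule-adjacent} module) whose highest weight is the $j$-th ``layer'' and for these, Stembridge's list confirms multiplicity-freeness of $\lien_{k,m}\otimes\lien_{k,n}$. So the proof of Part (3) reduces to a finite check against a known classification; I would present it as such, pointing to the explicit modules. The main risk is that no single clean uniform argument exists and one is forced into the classification, which is somewhat unsatisfying but standard in this area.
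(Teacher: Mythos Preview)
Your treatments of Parts (1) and (2) align with the paper's: both cite Kostant for the simplicity of the graded pieces, and both derive the occurrence of $\lien_{k,n}$ in $\lien_{k,1}^{\otimes n}$ from the fact that iterated brackets of degree-one elements fill out each graded piece (the paper phrases this via Kostant's identity $[\lien_{k,a},\lien_{k,b}]=\lien_{k,a+b}$, which is the same content).

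For Part (3), however, you have missed a clean uniform argument and propose instead a case-by-case verification via the classification of maximal parabolic nilradicals or Stembridge's multiplicity-free list, explicitly flagging the risk that ``no single clean uniform argument exists.'' The paper's proof shows this worry is unfounded. The key observation is that every weight of $\lien_{k,n}$ is a root of $\lieg$, hence occurs with multiplicity one. Combine this with the general bound $m^{\nu}_{\lambda,\mu}\le\dim V(\mu)_{\mu'}$ for the multiplicity of $V(\nu)$ in $V(\lambda)\otimes V(\mu)$ (where $\nu=\lambda+\mu'$ for some weight $\mu'$ of $V(\mu)$; the bound follows by projecting highest weight vectors onto $v_\lambda\otimes V(\mu)_{\mu'}$). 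Since all weight spaces of $\lien_{k,n}$ are one-dimensional, every multiplicity is at most one. This two-line argument replaces your proposed classification check entirely; your approach would work in principle but is far more laborious, and more to the point you had not actually carried it out.
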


\begin{proof}
(1) This follows by a general result of Kostant on root systems of Levi factors (of which we consider special cases here), see in particular \cite[Theorem 1.9]{kostant}.

(2) Translating \cite[Theorem 2.3]{kostant} into our notation, we have $[\lien_{k, a}, \lien_{k, b}] = \lien_{k, a + b}$ provided $a + b \leq N_k$. This fact easily implies the claim.

(3) This is a general result on the representation theory of semisimple Lie algebras.
Each component $V(\nu)$ occurring in $V(\lambda) \otimes V(\mu)$ is of the form $\nu = \lambda + \mu'$, with $\mu'$ a weight of $V(\mu)$, and moreover for its multiplicity we have $m^\nu_{\lambda, \mu} \leq \dim V(\mu)_{\mu'}$.
Then the result follows by observing that all weights of the modules $\lien_{k, n}$ are one-dimensional, since they are roots.
\end{proof}

\subsection{Weyl group}

Let $W$ be the Weyl group of the complex semisimple Lie algebra $\lieg$ of rank $r$, generated by the simple reflections $s_1, \cdots, s_r$.
We write $\ell$ for the word-length function on $W$ and denote by $w_0$ be the longest word of $W$.

Let $w = s_{i_1} \cdots s_{i_n}$ be a reduced decomposition of a Weyl group element. Then it is well-known that we obtain distinct positive roots of $\lieg$ by
\[
\beta_k = s_{i_1} \cdots s_{i_{k - 1}} (\alpha_{i_k}), \quad
k = 1, \cdots, n.
\]
In particular, fixing a reduced decomposition $w_0 = s_{i_1} \cdots s_{i_N}$ of the longest word,  we obtain an enumeration $\beta_1, \cdots, \beta_N$ of the positive roots of $\lieg$.
This enumeration satisfies the following convexity property, see \cite{papi}: if $i < j$ and $\beta_k = \beta_i + \beta_j$ then $i < k < j$.

Let $S \subseteq \simpleroots$ be any subset of the simple roots of $\lieg$.
This corresponds to a parabolic subalgebra $\liep_S = \liel_S \oplus \lien_S$.
Let $w_{0, S}$ be the longest word for the Weyl group of the semisimple part of $\liel_S$.
Then we define the \emph{parabolic element} $w_S = w_{0, S} w_0$ corresponding to $S$.

We recall some properties corresponding to these elements.

\begin{lemma}
Consider a parabolic element $w_S$ as defined above. Then:
\begin{enumerate}
\item We have $w_0 = w_{0, S} w_S$ and $\ell(w_0) = \ell(w_{0, S}) + \ell(w_S)$.
\item If we write $\ell(w_{0, S}) = M$ then
\[
\beta_1, \cdots, \beta_M \in \roots^+(\liel_S), \quad
\beta_{M + 1}, \cdots, \beta_N \in \roots^+(\lien_S).
\]
\item The positive roots $\roots^+(\lien_S)$ are invariant under $w_{0, S}$.
\item The positive roots corresponding to $w_S$ are exactly $\roots^+(\lien_S)$.
\end{enumerate}
\end{lemma}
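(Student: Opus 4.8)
The plan is to prove the four statements in sequence, each following from standard facts about reduced decompositions and parabolic cosets in Coxeter groups. Throughout, I will use the characterization of the inversion set: for $w \in W$, the set $\mathrm{inv}(w) = \roots^+ \cap w^{-1}(\roots^-)$ has cardinality $\ell(w)$, and a reduced decomposition $w = s_{j_1} \cdots s_{j_\ell}$ enumerates $\mathrm{inv}(w)$ via $\beta_k = s_{j_1} \cdots s_{j_{k-1}}(\alpha_{j_k})$. The key structural input is that $w_{0,S}$ is the longest element of the parabolic subgroup $W_S$ generated by $\{ s_i : \alpha_i \in S \}$, so that $\mathrm{inv}(w_{0,S}) = \roots^+(\liel_S) = \lspan(S) \cap \roots^+$, and that $w_{0,S}$ sends $\roots^+(\liel_S)$ to $\roots^-(\liel_S)$ while permuting $\roots^+(\lien_S)$ among themselves (since $w_{0,S}$ acts trivially on the $S$-coweight lattice direction and preserves the set of roots with a fixed nonzero support outside $S$; concretely, for $\alpha \in \roots^+(\lien_S)$, the $S$-component of $w_{0,S}\alpha$ is $w_{0,S}$ applied to a dominant-ish combination and the non-$S$ part is unchanged, hence $w_{0,S}\alpha$ is still positive). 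This last point is statement (3), so I would establish it first as the technical core.

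For (3): let $\alpha \in \roots^+(\lien_S)$, so $\alpha$ involves some simple root not in $S$ with positive coefficient. Since $w_{0,S} \in W_S$ only modifies the coefficients of the simple roots in $S$, the element $w_{0,S}\alpha$ still has the same (positive) coefficients on the simple roots outside $S$; a root with a positive coefficient on some simple root must be a positive root, hence $w_{0,S}\alpha \in \roots^+$, and it still lies in $\roots(\lien_S)$. Thus $w_{0,S}$ maps $\roots^+(\lien_S)$ into itself, and being invertible on the finite set $\roots(\lien_S)$ it permutes $\roots^+(\lien_S)$. Given (3), statement (1) is immediate: $\mathrm{inv}(w_0) = \roots^+$ is the disjoint union of $\roots^+(\liel_S)$ and $\roots^+(\lien_S)$; since $w_{0,S}$ preserves $\roots^+(\lien_S)$, one checks that $w_{0,S}^{-1}$ maps $\roots^+(\liel_S)$ to $\roots^-$ and $\roots^+(\lien_S)$ to $\roots^+$, which is exactly the statement that $\mathrm{inv}(w_{0,S}) \subseteq \mathrm{inv}(w_0)$ with $w_0 = w_{0,S} \cdot (w_{0,S}^{-1} w_0)$ reduced; noting $w_{0,S}^{-1} = w_{0,S}$ gives $w_0 = w_{0,S} w_S$ with $\ell(w_0) = \ell(w_{0,S}) + \ell(w_S)$.

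For (4): from the reduced factorization $w_0 = w_{0,S} w_S$ we get $\mathrm{inv}(w_S) = w_S^{-1}\, \mathrm{inv}(w_{0,S})^c \cap \roots^+ $ — more cleanly, $\mathrm{inv}(w_{0,S} w_S) = \mathrm{inv}(w_S) \sqcup w_S^{-1}\mathrm{inv}(w_{0,S})$, so $\mathrm{inv}(w_S) = \roots^+ \setminus w_S^{-1}(\roots^+(\liel_S))$. Combined with the fact that $w_S = w_{0,S} w_0$ sends $\roots^+(\lien_S)$ to negatives (since $w_0$ sends all of $\roots^+$ to $\roots^-$ and $w_{0,S}$ then preserves negativity of the $\lien_S$-part by the argument in (3)) while sending $\roots^+(\liel_S)$ to positives, we conclude $\mathrm{inv}(w_S) = \roots^+(\lien_S)$. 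Then (2) follows by choosing a reduced word for $w_0$ that begins with a reduced word for $w_{0,S}$, which is possible precisely because $\ell(w_0) = \ell(w_{0,S}) + \ell(w_S)$: the first $M = \ell(w_{0,S})$ roots $\beta_1, \dots, \beta_M$ enumerate $\mathrm{inv}(w_{0,S}) = \roots^+(\liel_S)$, and the remaining $\beta_{M+1}, \dots, \beta_N$ enumerate $w_{0,S}\,\mathrm{inv}(w_S) = w_{0,S}(\roots^+(\lien_S)) = \roots^+(\lien_S)$ by (3). I expect the main obstacle to be purely expository: stating cleanly the bookkeeping that relates $\mathrm{inv}(w_0)$, $\mathrm{inv}(w_{0,S})$, and $\mathrm{inv}(w_S)$ under the reduced product, and making sure the enumeration of roots in (2) is read off from the correct reduced word; none of the steps require genuinely new ideas beyond the standard Coxeter-theoretic dictionary, so I would likely cite a reference such as the book of Björner–Brenti or Humphreys for the inversion-set facts and keep the proof short.
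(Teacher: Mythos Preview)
Your approach is essentially the same as the paper's --- both reduce everything to the standard dictionary between reduced words, inversion sets, and the fact that $w_{0,S}$ permutes $\roots^+(\lien_S)$. The paper proves the parts in the order (1), (2), (3), (4), citing Humphreys for (1), getting (2) from (1) by the disjoint union $\roots^+ = \roots^+(\liel_S) \sqcup \roots^+(\lien_S)$, deducing (3) from the $\liel_S$-invariance of $\lien_S$, and then obtaining (4) from (2) and (3) via $\beta_{M+k} = w_{0,S}(\gamma_k)$. Your reordering (3) first, then (1), (4), (2), is perfectly fine, and your direct root-coefficient argument for (3) is actually more explicit than the paper's one-line appeal to invariance.

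There is, however, a genuine bookkeeping error. You define $\mathrm{inv}(w) = \roots^+ \cap w^{-1}(\roots^-) = \{\alpha>0 : w\alpha<0\}$, but the roots $\beta_k = s_{i_1}\cdots s_{i_{k-1}}(\alpha_{i_k})$ associated to a reduced word enumerate the \emph{other} inversion set $N(w) = \{\alpha>0 : w^{-1}\alpha<0\}$. For the involutions $w_0$ and $w_{0,S}$ this makes no difference, but for $w_S$ it does. Your direct argument for (4) --- that $w_S = w_{0,S} w_0$ sends $\roots^+(\lien_S)$ to negatives because ``$w_{0,S}$ preserves negativity of the $\lien_S$-part'' --- tacitly assumes $w_0$ sends $\roots^+(\lien_S)$ into $\roots^-(\lien_S)$, which fails whenever the diagram automorphism $-w_0$ does not preserve $S$. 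And your two formulas for the decomposition of inversions (the one you write for (4), $\mathrm{inv}(w_{0,S}w_S) = \mathrm{inv}(w_S) \sqcup w_S^{-1}\mathrm{inv}(w_{0,S})$, versus the one you implicitly use for (2), $\beta_{M+k} \in w_{0,S}\cdot\{\text{roots of }w_S\}$) are inconsistent with each other under your stated convention. The fix is simply to work with $N(w)$ throughout: then $N(w_{0,S}w_S) = N(w_{0,S}) \sqcup w_{0,S} N(w_S)$ gives $w_{0,S} N(w_S) = \roots^+(\lien_S)$, and (3) yields $N(w_S) = \roots^+(\lien_S)$ --- which is exactly the paper's argument for (4).
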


\begin{proof}
(1) Clearly $w_0 = w_{0, S} w_S$ since $w_{0, S}$ is involutive, being the longest word of a Weyl group.
For the statement on the length see for instance \cite[Section 1.8]{humphreys}.

(2) Clearly $\beta_1, \cdots, \beta_M \in \roots^+(\liel_S)$ since $w_{0, S}$ is the longest word for the semisimple part of $\liel_S$.
The result follows since $w_0$ gives an enumeration of the positive roots of $\lieg$ and we have the disjoint union $\roots^+(\lieg) = \roots^+(\liel_S) \sqcup \roots^+(\lien_S)$.

(3) This follows from the fact that $\lien_S$ is invariant under the adjoint action of $\liel_S$.

(4) Fix a reduced decomposition of $w_S$ and denote by $\{ \gamma_k \}_{k = 1}^{N - M}$ the corresponding positive roots.
Let $\{ \beta_k \}_{k = 1}^N$ be the positive roots corresponding to $w_0 = w_{0, S} w_S$ as above.
We note that $\beta_{M + k} = w_{0, S} (\gamma_k)$ for $k = 1, \cdots, N - M$.
We have $\beta_{M + 1}, \cdots, \beta_N \in \roots^+(\lien_S)$ by (2).
Since $\roots^+(\lien_S)$ is invariant under $w_{0, S}$ by (3), the result follows.
\end{proof}

\section{Quantum preliminaries}
\label{sec:quantum-preliminaries}

In this section we briefly recall various results concerning the quantized enveloping algebra $\Uqg$, with the books  \cite{jantzen} and \cite{brown-goodearl} as our main references.
In particular this includes the braid group action given by the Lusztig automorphisms $T_i$, the corresponding quantum root vectors, and the $q$-commutation relations among these.

\subsection{Quantized enveloping algebras}

Let $\lieg$ be a complex semisimple Lie algebra of rank $r$.
The \emph{quantized enveloping algebra} $\Uqg$ is the algebra with generators $\{ K_i^{\pm 1}, E_i, F_i \}_{i = 1}^r$ and relations as in \cite[Section 4.3]{jantzen} or \cite[Section I.6.3]{brown-goodearl}.

It becomes a Hopf algebra by the following coproduct
\[
\Delta(K_i) = K_i \otimes K_i, \quad
\Delta(E_i) = E_i \otimes 1 + K_i \otimes E_i, \quad
\Delta(F_i) = F_i \otimes K_i^{-1} + 1 \otimes F_i.
\]
The corresponding antipode is
\[
S(K_i) = K_i^{-1}, \quad
S(E_i) = - K_i^{-1} E_i, \quad
S(F_i) = - F_i K_i.
\]
As for any Hopf algebra, we have the left \emph{adjoint action} of $\Uqg$ on itself defined by
\[
x \triangleright y := x_{(1)} y S(x_{(2)}).
\]
Here we use Sweedler's notation $\Delta(x) = x_{(1)} \otimes x_{(2)}$ for the coproduct.

We note that $\Uqg$ can be defined over the field of fractions $\bbC(q)$ of an indeterminate $q$, or over $\bbC$ with $q \in \bbC$ under some restrictions.
For the purpose of this paper the first version is slightly more convenient, but we can as well work with the second version under the assumption that $q$ is transcendental.
In the last section we are also going to consider $*$-structures, in which case $q^* = q$ in the first version and $q \in \bbR$ in the second one.

Let $\liel_S \subseteq \lieg$ be the Levi subalgebra corresponding to a subset $S \subseteq \simpleroots$.
Then the \emph{quantized Levi factor} $\UqlS$ can be defined analogously to $\Uqg$, with the difference that in this case there are some central elements in the Cartan part.
For our purposes, it is more convenient to define it as the Hopf subalgebra $\UqlS \subseteq \Uqg$  generated by the elements $\{ K_i^{\pm 1} \}_{i = 1}^r \cup \{ E_i, F_i \}_{i \in S}$.
Clearly the left adjoint action restricts to an action of $\UqlS$ on $\Uqg$.

The representation theory of $\Uqg$ parallels the classical situation, since
the $\Uqg$-modules (of type 1) are in one-to-one correspondence with the $\lieg$-modules.
More precisely, in both cases the (finite-dimensional) simple modules are labeled by their highest weights, and similarly when comparing $\UqlS$ and $\liel_S$.
Furthermore, the tensor product of $\Uqg$-modules or $\UqlS$-modules decomposes into simple components as in the classical case.

Finally we recall that the category of finite-dimensional $\Uqg$-modules is braided.
The choice of braiding is not quite unique, and the one we consider can be characterized by
\[
\hat{R}_{V, W}(v_\mu \otimes w_\nu) = q^{(\mu, \nu)} w_\nu \otimes v_\mu + \sum_i w_i \otimes v_i,
\]
where for the additional terms $w_i$ has weight higher than $w_\nu$ and $v_i$ has weight lower than $v_\mu$.
This particular choice is going to play a role later in \cref{sec:coideal}.

\subsection{Braid group action}

Let $B_W$ be the braid group corresponding to the Weyl group $W$.
Concretely, it has the same relations as $W$ except for the relations $s_i^2 = 1$ for the simple reflections.
There is an action of $B_W$ on $\Uqg$ by algebra automorphisms $T_i$ with $i \in \{ 1, \cdots, r \}$, usually called \emph{Lusztig automorphisms}, see for instance \cite[Section I.6.7]{brown-goodearl}.

Given any reduced decomposition $w = s_{i_1} \cdots s_{i_n} \in W$ we define
\[
T_w = T_{i_1} \cdots T_{i_n}.
\]
This algebra automorphism does not depend on the chosen reduced decomposition of $w$, since the $T_i$'s satisfy the braid relations.
If $X_\mu \in \Uqg$ has weight $\mu$ then $T_w(X_\mu)$ has weight $w(\mu)$.

There are many possible conventions for the Lusztig automorphisms $T_i$.
For definiteness, we adopt the conventions from \cite[Section I.6.7]{brown-goodearl}. In particular we have
\begin{equation}
\label{eq:lusztig-relations}
T_i(K_\alpha) = K_{s_i(\alpha)}, \quad
T_i(E_i) = - F_i K_i, \quad
T_i(F_i) = - K_i^{-1} E_i.
\end{equation}

\subsection{Quantum root vectors}

Fix a reduced decomposition $w_0 = s_{a_1} \cdots s_{a_N}$ of the longest word of the Weyl group of $\lieg$.
Then the \emph{quantum root vectors} $E_{\beta_i}$ are defined by
\[
E_{\beta_i} := T_{a_1} \cdots T_{a_{i - 1}}(E_{a_i}), \quad i \in \{ 1, \cdots, N \}.
\]
Note that they depend on the choice of reduced decomposition of $w_0$.
The quantum root vectors $F_{\beta_i}$ corresponding to the negative roots are defined similarly.

The quantum root vectors can be used to construct a basis of $\Uqn$.
Let us write $\mathbf{M} = (m_1, \cdots, m_N)$ for an $N$-tuple of non-negative integers. Then the elements
\[
E^{\mathbf{M}} := E_{\beta_1}^{(m_1)} \cdots E_{\beta_N}^{(m_N)}, \quad m_1, \cdots, m_N \geq 0,
\]
give the corresponding basis, see for instance \cite[Section I.6.8]{brown-goodearl}.

The commutation relations between quantum root vectors are significantly more complicated than in the classical case.
The following general result can be found for instance in \cite[Section I.6.10]{brown-goodearl}, which we are going to use at various points later on.

\begin{proposition}
\label{prop:q-commutation-relations}
For $i < j$ we have
\[
E_{\beta_i} E_{\beta_j} - q^{(\beta_i, \beta_j)} E_{\beta_j} E_{\beta_i} = \sum_{k_{i + 1}, \cdots, k_{j - 1}} c_{k_{i + 1}, \cdots, k_{j - 1}} E_{\beta_{i + 1}}^{k_{i + 1}} \cdots E_{\beta_{j - 1}}^{k_{j - 1}},
\]
where the coefficients $c_{k_{i + 1}, \cdots, k_{j - 1}}$ are certain Laurent polynomials in $q$.
\end{proposition}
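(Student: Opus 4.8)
The statement to prove is \cref{prop:q-commutation-relations}: the Levendorskii--Soibelman straightening formula for quantum root vectors. The proof I would give proceeds by induction on the ``distance'' $j - i$ between the two indices, combined with the covexity property of the root enumeration and the behaviour of the Lusztig automorphisms.

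\begin{proof}[Proof sketch]
The plan is to argue by induction on $j - i$. The base case $j - i = 1$, that is $j = i + 1$, asserts that $E_{\beta_i} E_{\beta_{i+1}} - q^{(\beta_i, \beta_{i+1})} E_{\beta_{i+1}} E_{\beta_i}$ is a scalar (the empty product), which is the rank-two computation: applying $T_{a_1} \cdots T_{a_{i-1}}$ reduces everything to the case where $\beta_i$ and $\beta_{i+1}$ are the two successive roots $\alpha_{a_i}$ and $s_{a_i}(\alpha_{a_{i+1}})$ produced by a length-two reduced word, and one checks the relation directly inside the relevant rank-two subalgebra $U_q(\mathfrak{g}_{\{a_i, a_{i+1}\}})$ using the defining relations and \eqref{eq:lusztig-relations}. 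For the inductive step with $j - i \geq 2$, I would first apply the automorphism $\theta = T_{a_1} \cdots T_{a_{i-1}}$ to reduce to the case $i = 1$, so that $E_{\beta_1} = E_{a_1}$ and we must control $E_{a_1} E_{\beta_j} - q^{(\alpha_{a_1}, \beta_j)} E_{\beta_j} E_{a_1}$; note $\theta$ preserves weights appropriately and sends the span of $E_{\beta_{i+1}}, \dots, E_{\beta_{j-1}}$ to the span of $E_{\beta_{i+1}}, \dots, E_{\beta_{j-1}}$ for the shifted decomposition, so it suffices to prove the $i = 1$ case in general.

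Now write $E_{\beta_j} = T_{a_1}(Y)$ where $Y = T_{a_2} \cdots T_{a_{j-1}}(E_{a_j})$ is the quantum root vector of index $j - 1$ for the rotated reduced decomposition $s_{a_2} \cdots s_{a_N}$ (this is again reduced, by a standard fact, and $w' = s_{a_2}\cdots s_{a_N}$ has $s_{a_1} w' = w_0$). Apply $T_{a_1}^{-1}$ to the target expression: since $T_{a_1}(E_{a_1}) = -F_{a_1} K_{a_1}$, we are reduced to understanding the commutator of $Y$ with $F_{a_1} K_{a_1}$ inside the image of $\Uqn$ under $T_{a_1}^{-1}$. The key structural input is that $Y$ lies in the subalgebra generated by $E_{a_2}, \dots, E_{a_r}$ together with the quantum root vectors of $w'$; more usefully, because $j \geq 3$, the root $\beta_j$ does not involve $\alpha_{a_1}$ with coefficient forcing a problem --- and here I would instead invoke the PBW straightening directly: the product $E_{a_1} E_{\beta_j}$ expands in the PBW basis $E^{\mathbf{M}}$, and each occurring monomial has weight $\alpha_{a_1} + \beta_j$. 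By the convexity property of the enumeration $\beta_1, \dots, \beta_N$ (stated in the Weyl group subsection: if the sum of two roots in the list is again in the list, its index lies strictly between), any PBW monomial of weight $\alpha_{a_1} + \beta_j$ that is not a multiple of $E_{\beta_j} E_{a_1}$ or $E_{a_1} E_{\beta_j}$ must be supported on indices strictly between $1$ and $j$, i.e.\ on $E_{\beta_2}, \dots, E_{\beta_{j-1}}$. This is the crux: convexity plus weight-grading forces the ``error terms'' into the claimed span.

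To finish, I would combine this support statement with the fact that $E_{a_1} E_{\beta_j}$ and $E_{\beta_j} E_{a_1}$ are PBW-ordered or anti-ordered monomials respectively: reordering $E_{\beta_j} E_{a_1}$ into PBW form produces $q^{(\alpha_{a_1}, \beta_j)} E_{a_1} E_{\beta_j}$ as leading term plus lower terms supported, by the weight and convexity argument just made, on $\{\beta_2, \dots, \beta_{j-1}\}$; subtracting gives precisely the asserted identity. The one genuinely delicate point --- and the step I expect to be the main obstacle --- is justifying that the $q^{(\beta_i, \beta_j)}$ coefficient is exactly right and that no monomial supported on indices $\leq i$ or $\geq j$ (other than the two extremal ones) appears; this requires the convexity property in the sharp form ``$i < k < j$'' and a careful bookkeeping of how $T_{a_1}$ (or the inductive hypothesis applied to the pairs $(\beta_i, \beta_k)$ and $(\beta_k, \beta_j)$ with $k$ strictly between) interacts with the PBW ordering. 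In practice one can also run the whole argument purely inductively: expand $E_{\beta_i} E_{\beta_{i+1}}$ first using the base case, then commute the resulting lower-index vectors past $E_{\beta_{i+2}}, \dots$ using the inductive hypothesis on strictly smaller gaps, and collect terms --- the convexity property guaranteeing closure of the relevant span at each stage. Either route, the content is the same, and the references \cite{brown-goodearl} and the original Levendorskii--Soibelman argument can be cited for the routine verifications.
\end{proof}
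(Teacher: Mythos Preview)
The paper does not give its own proof of \cref{prop:q-commutation-relations}; it simply states the result and refers to \cite[Section I.6.10]{brown-goodearl}. So there is no in-paper argument to compare against, and your sketch already goes further than the text does.

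That said, your sketch has a genuine gap at the point you yourself flag as delicate. The convexity property recorded in the paper (``if $i<j$ and $\beta_k=\beta_i+\beta_j$ then $i<k<j$'') controls only the case where the sum of two roots is again a \emph{single} root. It does not, by itself, force an arbitrary PBW monomial $E_{\beta_1}^{m_1}\cdots E_{\beta_N}^{m_N}$ of weight $\beta_i+\beta_j$ to be supported on $\{i+1,\dots,j-1\}$: there is no a priori reason why a decomposition $\beta_i+\beta_j=\sum_k m_k\beta_k$ with several terms should have all indices in that interval. Your step ``reordering $E_{\beta_j}E_{a_1}$ into PBW form produces $q^{(\alpha_{a_1},\beta_j)}E_{a_1}E_{\beta_j}$ plus lower terms supported on $\{\beta_2,\dots,\beta_{j-1}\}$'' is exactly the statement being proved, so invoking it is circular.

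What actually does the work in the standard proofs (Levendorski\u{\i}--Soibelman, Lusztig, and the treatment in \cite{brown-goodearl}) is a sharper use of the Lusztig automorphisms than you outline: one shows that for a reduced factorisation $w=uv$ with $\ell(w)=\ell(u)+\ell(v)$, the subalgebra generated by the quantum root vectors associated to $v$ is exactly $T_u^{-1}(\Uqn)\cap \Uqn$ (or an equivalent characterisation), and this algebraic description---not a weight/convexity count---is what confines the straightening terms to the desired interval. Your reduction to $i=1$ via $T_{a_1}\cdots T_{a_{i-1}}$ is correct and is part of that argument, but the subsequent step needs this subalgebra statement (or the dual reduction to $j=N$ and an explicit identification of $E_{\beta_N}$) rather than convexity alone. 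The purely inductive variant you mention at the end has the same issue: the induction hypothesis on smaller gaps gives you nothing to start from for the pair $(i,j)$ itself.
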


We refer to this result as the \emph{$q$-commutation relations} for the quantum root vectors.
For any weight elements $X_\mu, Y_\nu \in \Uqg$, we can define the \emph{q-commutator} by
\[
[X_\mu, Y_\nu]_q := X_\mu Y_\nu - q^{(\mu, \nu)} Y_\nu X_\mu.
\]
Then the $q$-commutation relations express the $q$-commutator $[E_{\beta_i}, E_{\beta_j}]_q$ as a linear combination of elements $E_{\beta_{i + 1}}^{k_{i + 1}} \cdots E_{\beta_{j - 1}}^{k_{j - 1}}$ ranging from $i + 1$ to $j - 1$.

\begin{remark}
\label{rem:lusztig-automorphisms}
In the literature one finds various slightly different definitions of the Lusztig automorphisms. For instance, the $T_i$'s defined in \cite[Section 8.14]{jantzen} and \cite[Section I.6.7]{brown-goodearl} both satisfy the relations \eqref{eq:lusztig-relations} but differ by signs for $T_i(E_j)$ and $T_i(F_j)$ when $i \neq j$.

More generally, consider two different choices $T_i$ and $\tilde{T}_i$ which are related in the following way: for any weight vector $X_\mu$ of weight $\mu$, we have $\tilde{T}_i(X_\mu) = t_{i, \mu} T_i(X_\mu)$, where the coefficient $t_{i, \mu}$ depends only of the weight of $X_\mu$.
Then, writing $E_{\beta_k}$ and $\tilde{E}_{\beta_k}$ for the quantum root vectors for these two choices, we find that they only differ by an overall factor.
This implies that the form of the commutation relations is the same in both cases, since the overall factor on the left-hand side can be absorbed into the coefficients on the right-hand side.
\end{remark}

\section{Equivariant quantizations of the nilradical}
\label{sec:equivariant-quantizations}

With the proper preliminaries in place, we can now consider \cref{prob:quantization} as mentioned in the introduction.
We want to construct a quantum analogue of $\lien$ inside $\Uqn$ with a fixed decomposition $\lien = \lien_1 \oplus \cdots \oplus \lien_r$, corresponding to an increasing sequence $S_1 \subset \cdots \subset S_r$ of $\simpleroots$.
We show that this can be obtained rather easily by representation-theoretic arguments.
The given construction is not unique in general, but it is notably so in the case of the classical series $A_r$, $B_r$, $C_r$ and $D_r$.
Finally we discuss the classical situation in more detail, in order to motivate some of the requirements we are going to impose in later sections.

\subsection{Existence of quantizations}

Consider an increasing sequence $S_1 \subset \cdots \subset S_r$ of proper subsets of $\simpleroots$ and let $\alpha_{i_1} < \cdots < \alpha_{i_r}$ be the corresponding total order.
Then, as discussed in \cref{sec:decompositions}, from this data we obtain a decomposition of the positive nilradical
\[
\lien = \lien_1 \oplus \cdots \oplus \lien_r,
\]
where each $\lien_k$ is $\bbN$-graded by $\alpha_{i_k}$ and a $\liel_k$-module.

We look for a quantization of this decomposition in the following sense.

\begin{definition}
\label{def:equivariant-quantization}
An \emph{equivariant quantization} of the positive nilradical $\lien$, together with a decomposition corresponding to $S_1 \subset \cdots \subset S_r$, is a finite-dimensional subspace
\[
\qnil = \qnil_1 \oplus \cdots \oplus \qnil_r \subset \Uqn
\]
such that each summand $\qnil_k$ satisfies the following properties:
\begin{itemize}
\item each $\qnil_k$ is a $\Uqlk$-module under $\triangleright$, corresponding to the $\liel_k$-module $\lien_k$,
\item each graded component $\qnil_{k, n}$ is contained in the $n$-fold product of $\qnil_{k, 1}$.
\end{itemize}
\end{definition}

We remark that the grading of each module $\qnil_k$ is with respect to the simple root $\alpha_{i_k}$, using the fact that $\Uqn$ is graded by the root lattice.

Some comments on the definition.
In the first condition we make use of the fact that $\Uqg$-modules (of type 1) are in one-to-one correspondence with $\lieg$-modules, and similarly for $\UqlS$ and $\liel_S$.
The second condition is a weak replacement for the existence of a Lie algebra structure on $\lien_k$, which can be formulated in this way since $[\lien_{k, a}, \lien_{k, b}] = \lien_{k, a + b}$ by \cref{prop:classical-module}.

We begin by investigating the degree-one components of the decomposition.

\begin{lemma}
\label{lem:action-degree-one}
Suppose $\qnil_{k, 1} \subset \Uqn$ is an $\Uqlk$-module under the left adjoint action, corresponding to the $\liel_k$-module $\lien_{k, 1}$.
Then we must have $\qnil_{k, 1} = \Uqlk \triangleright E_{i_k}$.
\end{lemma}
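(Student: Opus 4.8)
The plan is to pin down $\qnil_{k,1}$ by combining its highest-weight structure with the standard description of the locally finite part of $\Uqn$ under the adjoint action. First I would recall that $\lien_{k,1}$ is a simple $\liel_k$-module by \cref{prop:classical-module}(1), and identify its highest weight: since $\lien_{k,1}$ consists of the root spaces $\lieg_\alpha$ with $\alpha \in \roots^+(\lien_k)$ in which $\alpha_{i_k}$ occurs with coefficient one, and these roots are permuted by the Weyl group of $\liel_k$ in a single orbit, the highest weight of $\lien_{k,1}$ is $\alpha_{i_k}$ itself (the unique root of degree one that is $\roots^+(\liel_k)$-dominant). Correspondingly, any $\Uqlk$-submodule of $\Uqn$ isomorphic to $\lien_{k,1}$ must be generated, under $\triangleright$, by a highest-weight vector of weight $\alpha_{i_k}$.

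Next I would argue that, up to scalar, $E_{i_k}$ is the only such highest-weight vector available inside $\Uqn$. The weight space $(\Uqn)_{\alpha_{i_k}}$ is one-dimensional, spanned by $E_{i_k}$, since a PBW monomial $E^{\mathbf M}$ has weight $\sum_j m_j \beta_j$ and the only way to obtain the simple root $\alpha_{i_k}$ as a non-negative integer combination of positive roots is $\beta_j = \alpha_{i_k}$ with multiplicity one. Hence the highest-weight vector generating $\qnil_{k,1}$ must be a nonzero multiple of $E_{i_k}$, and therefore $\qnil_{k,1} = \Uqlk \triangleright E_{i_k}$ as a $\Uqlk$-module, provided the latter is indeed finite-dimensional and isomorphic to $\lien_{k,1}$.

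That last point — showing $\Uqlk \triangleright E_{i_k}$ is the simple module of highest weight $\alpha_{i_k}$, and in particular is finite-dimensional — is where I expect the only real content to sit. Here I would use that $E_{i_k}$ is a highest-weight vector for $\Uqlk$: since $i_k \notin S_k$, for each $j \in S_k$ one has $(\alpha_j,\alpha_{i_k}) \le 0$, and a short computation with $\Delta(E_j)$, $S(K_j)$ gives $E_j \triangleright E_{i_k} = E_j E_{i_k} - q^{(\alpha_j,\alpha_{i_k})} E_{i_k} E_j$, which is a $q$-commutator that vanishes precisely when $\alpha_j$ and $\alpha_{i_k}$ are non-adjacent and otherwise generates the expected quantum root vectors; in all cases the resulting cyclic module has the correct character by comparison with the classical $\liel_k$-module $\lien_{k,1}$, using that finite-dimensional type-$1$ $\Uqlk$-modules have the same characters as their classical counterparts. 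Since $K_j \triangleright E_{i_k} = q^{(\alpha_j,\alpha_{i_k})} E_{i_k}$ recovers the weight, and the module generated is a highest-weight module with the character of the simple module $\lien_{k,1}$, it is simple and isomorphic to $\lien_{k,1}$. Combined with the weight-space count above, this forces $\qnil_{k,1} = \Uqlk \triangleright E_{i_k}$, completing the proof.
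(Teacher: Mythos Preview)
Your overall strategy---identify the relevant weight space of $\Uqn$ as one-dimensional, so the cyclic generator is forced to be $E_{i_k}$---is exactly the paper's approach, and that part of your argument is fine. The error is that you have the orientation backwards: $\alpha_{i_k}$ is the \emph{lowest} weight of $\lien_{k,1}$, not the highest. For $j \in S_k$ one has $(\alpha_{i_k},\alpha_j^\vee) \le 0$, so $\alpha_{i_k}$ is anti-dominant for $\liel_k$, not dominant; correspondingly the roots in $\lien_{k,1}$ are of the form $\alpha_{i_k} + (\text{sum of roots in } S_k)$, and $\alpha_{i_k}$ sits at the bottom.

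This matters because your verification that $E_{i_k}$ is a highest-weight vector does not go through: you yourself note that $E_j \triangleright E_{i_k}$ is a nonzero $q$-commutator when $\alpha_j$ and $\alpha_{i_k}$ are adjacent, which directly contradicts the highest-weight claim. The paper instead checks that $E_{i_k}$ is a \emph{lowest}-weight vector by computing $F_j \triangleright E_{i_k} = [F_j, E_{i_k}] K_j = 0$ for $j \in S_k$ (immediate, since $j \neq i_k$). Once that is established, the cyclic module $\Uqlk \triangleright E_{i_k}$ is a simple finite-dimensional module of lowest weight $\alpha_{i_k}$, hence corresponds to $\lien_{k,1}$, and the weight-space argument forces $\qnil_{k,1} = \Uqlk \triangleright E_{i_k}$. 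Swapping ``highest'' for ``lowest'' and checking the $F_j$-action rather than the $E_j$-action repairs your proof completely.
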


\begin{proof}
The degree-one component $\lien_{k, 1}$ is a simple $\liel_k$-module by \cref{prop:classical-module}.
It is easy to see that it is generated by the lowest weight vector $e_{i_k}$, where $\alpha_{i_k}$ is the simple root with respect to which $\lien_k$ is $\bbN$-graded.
Similarly, the simple $\Uqlk$-module $\qnil_{k, 1} \subset \Uqn$ must be generated by a lowest weight vector of weight $\alpha_{i_k}$. The only elements of $\Uqn$ of weight $\alpha_{i_k}$ are multiples of the generator $E_{i_k}$.
In our conventions we have
\[
F_i \triangleright X = F_i X K_i - X F_i K_i = [F_i, X] K_i.
\]
This shows that $F_i \triangleright E_{i_k} = 0$ for any $F_i \in \Uqlk$, since $\alpha_{i_k}$ is not a root of the Levi factor $\liel_k$.
Therefore $E_{i_k}$ is a lowest weight vector and generates a simple $\Uqlk$-module corresponding to $\lien_{k, 1}$.
Finally we note that $\Uqlk \triangleright E_{i_k} \subset \Uqn$, since it suffices to take the positive part of $\Uqlk$ to generate this module, as $E_{i_k}$ is a lowest weight vector.
\end{proof}

This shows that the degree-one components $\qnil_{1, 1}, \cdots, \qnil_{r, 1}$ exist and are uniquely determined.
They are going to be the building blocks for the next result.

\begin{proposition}
\label{prop:equivariant-quantizations}
Let $\lieg$ be a complex semisimple Lie algebra of rank $r$.
Let $S_1 \subset \cdots \subset S_r$ be an increasing sequence of proper subsets of $\simpleroots$, with $\lien = \lien_1 \oplus \cdots \oplus \lien_r$ the corresponding decomposition of the positive nilradical.

Then there exists a finite-dimensional subspace
\[
\qnil = \qnil_1 \oplus \cdots \oplus \qnil_r \subset \Uqn
\]
such that each summand $\qnil_k$ satisfies the following properties:
\begin{itemize}
\item $\qnil_k$ is an $U_q(\liel_k)$-module under $\triangleright$, corresponding to the $\liel_k$-module $\lien_k$,
\item each graded component $\qnil_{k, n}$ is contained in the $n$-fold product of $\qnil_{k, 1}$.
\end{itemize}
\end{proposition}

\begin{proof}
In \cref{lem:action-degree-one} we proved that, for each $k \in \{ 1, \cdots, r \}$, the degree-one component $\qnil_{k, 1} \subset \Uqn$ exists and is uniquely determined.
The components $\qnil_{k, n}$ of degree $n > 1$ can be constructed as follows (but not in a unique way, in general).

Consider the $n$-fold product $\qnil_{k, 1} \cdots \qnil_{k, 1} \subset \Uqn$.
This is an $U_q(\liel_k)$-module and a quotient of $(\qnil_{k, 1})^{\otimes n}$ by the multiplication map in $\Uqn$.
We know that the $\liel_k$-module $\lien_{k, n}$ appears as a simple component of $\lien_{k, 1}^{\otimes n}$ by \cref{prop:classical-module}, hence the same is true for the corresponding $U_q(\liel_k)$-module. Then we can pick a simple component corresponding to the $\liel_k$-module $\lien_{k, n}$ within $(\qnil_{k, 1})^{\otimes n}$, and denote by $\qnil_{k, n}$ its image in $\Uqn$.
At least one such component is going to be non-zero under the multiplication map, by specialization at $q = 1$.
\end{proof}

We have the following simple result concerning the uniqueness of this construction.

\begin{corollary}
\label{cor:uniqueness-quantization}
Consider an equivariant quantization $\qnil = \qnil_1 \oplus \cdots \oplus \qnil_r$ as above.
\begin{enumerate}
\item The components $\qnil_{k, 1}$ and $\qnil_{k, 2}$ are uniquely determined for $k \in \{ 1, \cdots, r \}$.
\item For the classical series $A_r$, $B_r$, $C_r$, $D_r$ the quantization is uniquely determined.
\end{enumerate}
\end{corollary}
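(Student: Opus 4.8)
The plan is to reduce both statements to facts already established, the key ingredient being the multiplicity-free property of \cref{prop:classical-module}(3). For part (1), the uniqueness of $\qnil_{k,1}$ is exactly \cref{lem:action-degree-one}, which pins it down as $\Uqlk \triangleright E_{i_k}$. For $\qnil_{k,2}$ I would argue as follows. By \cref{def:equivariant-quantization} the graded component $\qnil_{k,2}$ is a $\Uqlk$-submodule of the product $\qnil_{k,1} \qnil_{k,1} \subset \Uqn$, and under the correspondence between type-$1$ modules it corresponds to the $\liel_k$-module $\lien_{k,2}$, which is \emph{simple} by \cref{prop:classical-module}(1). Now $\qnil_{k,1} \qnil_{k,1}$ is a $\Uqlk$-module and a quotient of $(\qnil_{k,1})^{\otimes 2}$ under the multiplication map, as noted in the proof of \cref{prop:equivariant-quantizations}. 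Since $\lien_{k,1} \otimes \lien_{k,1}$ decomposes multiplicity-freely by \cref{prop:classical-module}(3), and tensor products of $\Uqlk$-modules decompose exactly as in the classical case, the module $(\qnil_{k,1})^{\otimes 2}$ is multiplicity-free, hence so is its quotient $\qnil_{k,1} \qnil_{k,1}$. Therefore $\qnil_{k,1} \qnil_{k,1}$ contains at most one submodule corresponding to $\lien_{k,2}$, and since $\qnil_{k,2}$ is such a submodule it is uniquely determined.

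For part (2), recall that $\qnil_k = \bigoplus_{n = 1}^{N_k} \qnil_{k,n}$, where $N_k$ is the maximal coefficient with which $\alpha_{i_k}$ occurs in a root of $\lien_k$. Since $\lien_k \subset \lieg_{k+1}$ and $\alpha_{i_k}$ is a node of the Dynkin diagram of $\lieg_{k+1}$, this number equals the coefficient of $\alpha_{i_k}$ in the highest root of the simple component of $\lieg_{k+1}$ containing it. When $\lieg$ is of type $A_r$, $B_r$, $C_r$ or $D_r$, every connected subdiagram of its Dynkin diagram is again of one of these types, and in each classical type every simple root occurs in the highest root with coefficient at most $2$. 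Hence $N_k \leq 2$ for all $k$, so $\qnil_k = \qnil_{k,1} \oplus \qnil_{k,2}$, and both summands are uniquely determined by part (1); thus $\qnil$ is uniquely determined.

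The only genuinely delicate point is the uniqueness of $\qnil_{k,2}$: one has to rule out that the $\lien_{k,2}$-isotypic part of $\qnil_{k,1} \qnil_{k,1}$ appears with multiplicity greater than one, and this is exactly where the multiplicity-free property of \cref{prop:classical-module}(3), transported to the quantum side, enters. Everything else is bookkeeping. I should add that part (2) genuinely fails for the exceptional types $G_2$, $F_4$, $E_6$, $E_7$, $E_8$, where graded components $\qnil_{k,n}$ with $n \geq 3$ appear and are no longer determined by \cref{def:equivariant-quantization} alone, which is precisely why the coideal condition is imposed in the later sections.
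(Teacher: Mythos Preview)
Your proof is correct and follows essentially the same approach as the paper: both parts hinge on \cref{lem:action-degree-one} for degree one, the multiplicity-free decomposition of \cref{prop:classical-module}(3) for degree two, and the observation that in the classical series every simple root appears in the highest root with coefficient at most two. Your version is in fact slightly more careful in part (2), explicitly noting that connected subdiagrams of classical Dynkin diagrams are again classical, which justifies the bound $N_k \le 2$ for the intermediate Lie algebras $\lieg_{k+1}$.
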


\begin{proof}
(1) We have already seen that each $\qnil_{k, 1}$ is uniquely determined in \cref{lem:action-degree-one}.
For the component of degree two, observe that $\qnil_{k, 1} \otimes \qnil_{k, 1}$ admits a multiplicity-free decomposition by \cref{prop:classical-module}, which implies that there is only one component of this type.

(2) It is easily checked that the classical series $A_r$, $B_r$, $C_r$, $D_r$ have the property that their highest roots contain each simple root with coefficient at most two. This means that the $\Uqlk$-module $\qnil_k$ has at most two components, for any $k \in \{ 1, \cdots, r \}$ and for any sequence $S_1 \subset \cdots \subset S_r$.
Then the uniqueness from (1) gives the result.
\end{proof}

This uniqueness does not extend to components of higher degree, as we discuss in the next subsection.
For this reason we need further conditions to obtain a reasonable analogue of $\lien = \lien_1 \oplus \cdots \oplus \lien_r$ for the exceptional Lie algebras, which we discuss later.

\subsection{Comments on the result}
\label{subsec:comments-result}

Here we want to illustrate some shortcomings of the construction given in the proof of \cref{prop:equivariant-quantizations}, and motivate some of the developments that appear in the next sections.
We do this by specializing the construction to the classical case, that is for the universal enveloping algebra $U(\lieg)$ corresponding to $\lieg$.

Recall that $U(\lieg)$ is a Hopf algebra with coproduct determined by
\[
\Delta(x) = x \otimes 1 + 1 \otimes x, \quad x \in \lieg.
\]
In particular, this is the case for any of the summands appearing in the decomposition of the positive nilradical $\lien = \lien_1 \oplus \cdots \oplus \lien_r$, as well as for their graded components.

Focusing on a fixed summand $\lien_k$, we can try to follow the strategy of \cref{prop:equivariant-quantizations} to recover the graded component $\lien_{k, n}$ of degree $n$ inside $U(\lien)$.
In order to do this, we decompose the tensor product $\lien_{k, 1}^{\otimes n}$ into simple components as an $\liel_k$-module, then select one component isomorphic to $\lien_{k, n}$ and map it to $U(\lien)$ using the multiplication map.
This procedure gives the expected result for $n = 1$ and $n = 2$ but not necessarily for $n \geq 3$, as we explain below.
Note that this is trivial for $n = 1$, while for $n = 2$ it follows from the fact that $\lien_{k, 1} \otimes \lien_{k, 1}$ has a multiplicity-free decomposition, as argued in \cref{cor:uniqueness-quantization}.

First we discuss the case of the component of degree two more explicitly.
As already observed, the simple component $\tilde{\lien}_{k, 2} \subset \lien_{k, 1} \otimes \lien_{k, 1}$ is uniquely determined and antisymmetric.
Fixing a basis $\{ x_i \}_i$ of $\lien_{k, 1}$, we have a basis $\{ \tilde{y}_i \}_i$ of $\tilde{\lien}_{k, 2}$ of the form $\tilde{y}_i = \sum_{a, b} c^i_{a b} x_a \otimes x_b$ with $c^i_{a b} = - c^i_{b a}$.
Its image under the multiplication map $\mu$ is then $y_i = \frac{1}{2} \sum_{i, j} c^i_{a b} [x_a, x_b]$, which makes it clear that we have $\mu(\tilde{\lien}_{k, 2}) = \lien_{k, 2}$ and furthermore $\Delta(y_i) = y_i \otimes 1 + 1 \otimes y_i$.

Now consider the case of the component of degree three.
First we observe that there are at least two components isomorphic to $\lien_{k, 3}$ in the decomposition of $\lien_{k, 1}^{\otimes 3}$. Indeed, denoting them by $C_1$ and $C_2$, they are obtained from the following decompositions
\[
C_1 \subset \tilde{\lien}_{k, 2} \otimes \lien_{k, 1}, \quad
C_2 \subset \lien_{k, 1} \otimes \tilde{\lien}_{k, 2}.
\]
Both of these components are uniquely determined, as the two tensor products have multiplicity-free decompositions.
Then their images in $U(\lien)$ under the multiplication map are candidates for $\lien_{k, 3}$, as in the construction of \cref{prop:equivariant-quantizations}.

However neither of these actually correspond to the component $\lien_{k, 3} \subset \lien$.
To see this, consider an element $\tilde{z} = \sum_{i, j} b_{i j} \tilde{y}_i \otimes x_j$ of $C_1$ and the corresponding element $z = \sum_{i, j} b_{i j} y_i x_j$ in $U(\lien)$.
Then its coproduct is given by $\Delta(z) = \sum_{i, j} b_{i j} \Delta(y_i) \Delta(x_j)$, which is not of the form $z \otimes 1 + 1 \otimes z$. The situation is analogous for the component $C_2$.

On the other hand, $\lien_{k, 3}$ can be recovered from a subspace of $C_1 \oplus C_2$ as follows.
Denoting by $\tau$ the flip map, we first observe that $C_2 = \tau_1 \tau_2 (C_1)$. Then we consider
\[
C = (\id - \tau_1 \tau_2) (C_1) \subset C_1 \oplus C_2.
\]
This means that any element of $C$ has the general form $\tilde{z} = \sum_{i, j} b_{i j} (\tilde{y}_i \otimes x_j - x_j \otimes \tilde{y}_i)$.
Then the corresponding element in $U(\lien)$ is given by $z = \sum_{i, j} b_{i j} [y_i, x_j]$, which immediately shows that $\Delta(z) = z \otimes 1 + 1 \otimes z$, as should be the case for the component $\lien_{k, 3}$.

This illustrates that, in order to recover the classical situation, we should impose extra conditions when quantizing $\lien_{k, n}$ for $n \geq 3$.
As an analogue of the condition $\Delta(z) = z \otimes 1 + 1 \otimes z$ for $z \in \lien_k$, we are going to require that $\bbC \oplus \qnil_k$ should be a left $\Uqb$-coideal.

\section{Adjoint action on Schubert cells}
\label{sec:schubert-cells}

The goal of this section is to connect the equivariant quantizations from the previous section to the quantum Schubert cells $\schub(w)$ introduced by De Concini, Kac and Procesi in \cite{dckp}, corresponding to a Weyl group element $w$, as well as on their twisted versions $\tschub(w_S)$ introduced by Zwicknagl in \cite{zwicknagl}, corresponding to a factorization of the type $w_0 = w_{0, S} w_S$.
The advantage of the latter is that it is invariant under the left adjoint action of the quantized Levi factor, as shown in \cite[Theorem 5.21]{zwicknagl}.

We give a short proof of this result, which tries to elucidate the roles of both $\schub(w_S)$ and $\tschub(w_S)$.
Then we discuss the connection with the equivariant quantizations of $\lien$.

\subsection{Quantum Schubert cells}

For any Weyl group element $w \in W$, De Concini, Kac and Procesi define an algebra $\schub(w) \subseteq \Uqn$ as follows.

\begin{definition}
Fix a reduced decomposition $w = s_{i_1} \cdots s_{i_n}$ and let $E_{\beta_1}, \cdots, E_{\beta_n}$ be the corresponding quantum root vectors.
Then the \emph{quantum Schubert cell} $\schub(w)$ is defined as the subalgebra of $\Uqn$ generated by these quantum root vectors.
\end{definition}

It can be shown that this algebra does not depend on the chosen reduced decomposition of $w$, see the proposition appearing in \cite[Section 2.2]{dckp}.

Now consider a factorization $w_0 = w_{0, S} w_S$ corresponding to a parabolic subalgebra defined by $S \subseteq \simpleroots$. Then in general $\schub(w_S)$ is not invariant under the left adjoint action of $\UqlS$, the quantized Levi factor.
This motivates the following modification of the Schubert cell $\schub(w_S)$ introduced by Zwicknagl, see \cite[Definition 5.4]{zwicknagl}.

\begin{definition}
Consider the factorization $w_0 = w_{0, S} w_S$ with respect to $S \subseteq \simpleroots$.
Then the \emph{twisted quantum Schubert cell} $\tschub(w_S)$ is defined by
\[
\tschub(w_S) := T_{w_{0, S}}(\schub(w_S)).
\]
\end{definition}

Here $T_{w_{0, S}}$ is the Lusztig automorphism corresponding to the longest word $w_{0, S}$ for $S$.
It does not depend on the chosen reduced decomposition for this element.

This definition can be generalized to the relative setting of a Hopf algebra embedding $\Uqk \hookrightarrow \Uqg$, where $\liek$ corresponds to a Dynkin subdiagram of $\lieg$.
In this case the twisted quantum Schubert cell $\tschub(w_S) \subseteq \Uqk$ can be transported to $\Uqg$ in the obvious way.

\subsection{Invariance under the adjoint action}

We are now going to show that the twisted quantum Schubert cell $\tschub(w_S)$ is invariant under the left adjoint action of the quantized Levi factor $\UqlS$, a result originally proven in \cite[Theorem 5.21]{zwicknagl}.

Recall that, given weight vectors $X_\mu$ and $Y_\nu$ of weight $\mu$ and $\nu$ respectively, their $q$-commutator is given by $[X_\mu, Y_\nu]_q = X_\mu Y_\nu - q^{(\mu, \nu)} Y_\nu X_\mu$.
We begin by showing that the two algebras $\schub(w_S)$ and $\tschub(w_S)$ are invariant, in an appropriate sense, under taking $q$-commutators with the generators of the positive part of the quantized Levi factor $\UqlS$.

\begin{lemma}
\label{lem:qcommutator-schubert}
For any $i \in S$ we have:
\begin{enumerate}
\item $[\schub(w_S), E_i]_q \subseteq \schub(w_S)$ for the quantum Schubert cell,
\item $[E_i, \tschub(w_S)]_q \subseteq \tschub(w_S)$ for the twisted quantum Schubert cell.
\end{enumerate}
\end{lemma}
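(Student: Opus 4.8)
The plan is to prove the two statements in parallel, exploiting the symmetry between $\schub(w_S)$ and $\tschub(w_S)$ under the Lusztig automorphism $T_{w_{0,S}}$, together with the $q$-commutation relations of \cref{prop:q-commutation-relations}.

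First I would establish statement (1). The key structural fact is that, for a reduced decomposition $w_S = s_{j_1} \cdots s_{j_{N-M}}$ adapted to a reduced decomposition $w_0 = w_{0,S}\, w_S$, the quantum root vectors $E_{\beta_{M+1}}, \ldots, E_{\beta_N}$ generating $\schub(w_S)$ are indexed by roots $\beta_{M+1}, \ldots, \beta_N \in \roots^+(\lien_S)$, while the generators $E_i$ with $i \in S$ correspond to the roots $\beta_1, \ldots, \beta_M \in \roots^+(\liel_S)$ sitting \emph{before} them in the convex order. The idea is to prepend $s_i$ to the reduced word for $w_0$: since $i \in S$, one can choose the reduced decomposition of $w_{0,S}$ (hence of $w_0$) so that $s_i$ occurs first, making $E_i = E_{\beta_1}$ one of the initial root vectors. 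Then for each $j \in \{M+1, \ldots, N\}$, the $q$-commutation relations give
\[
[E_{\beta_1}, E_{\beta_j}]_q = E_{\beta_1} E_{\beta_j} - q^{(\beta_1, \beta_j)} E_{\beta_j} E_{\beta_1} = \sum c_{k_2, \ldots, k_{j-1}} E_{\beta_2}^{k_2} \cdots E_{\beta_{j-1}}^{k_{j-1}}.
\]
The subtlety is that the monomials on the right involve $E_{\beta_2}, \ldots, E_{\beta_M}$ as well, which need \emph{not} lie in $\schub(w_S)$. Here one uses a weight/convexity argument: the weight of $[E_i, E_{\beta_j}]_q$ is $\alpha_i + \beta_j$, which lies in $\roots^+(\lien_S)$ (or is not a root, in which case the bracket vanishes) because $\lien_S$ is an $\liel_S$-submodule under the adjoint action — adding a root of the Levi factor to a root of the nilradical cannot leave the nilradical. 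Combined with the PBW basis and convexity of the root enumeration, any monomial $E_{\beta_2}^{k_2} \cdots E_{\beta_{j-1}}^{k_{j-1}}$ appearing with a root of this weight must in fact involve only the indices $M+1, \ldots, j-1$, i.e. lies in $\schub(w_S)$. Extending from generators $E_{\beta_j}$ to all of $\schub(w_S)$ is then a routine induction on the length of PBW monomials, using the $q$-Leibniz rule $[XY, E_i]_q = X [Y, E_i]_{q} + q^{(\nu, \alpha_i)}[X, E_i]_q Y$ for weight-homogeneous $X, Y$ (with appropriate $q$-powers), which keeps everything inside the subalgebra.

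For statement (2), I would simply apply $T_{w_{0,S}}$ to statement (1). By definition $\tschub(w_S) = T_{w_{0,S}}(\schub(w_S))$, and $T_{w_{0,S}}$ is an algebra automorphism; the point is to track what happens to $E_i$ and to the $q$-commutator. Since $w_{0,S}$ is the longest word of the Weyl group of the semisimple part of $\liel_S$ and $i \in S$, we have $w_{0,S}(\alpha_i) = -\alpha_{\sigma(i)}$ for the diagram involution $\sigma$ of $S$, so $T_{w_{0,S}}(E_i)$ is (up to a nonzero scalar and lower-order terms, or exactly, depending on conventions) the negative root vector $F_{\sigma(i)}$ or a related element — more precisely it is an element of the quantized Levi factor $\UqlS$ acting within the relevant weight spaces. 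The cleaner route: $T_{w_{0,S}}$ conjugates $[\,\cdot\,, E_i]_q$ into an operation of the form $[F_{i'}, \,\cdot\,]_q$ up to scalars (the reversal of the order and the swap $E \leftrightarrow F$ accounts for why (1) is a right $q$-commutator and (2) a left one), and it sends $\schub(w_S)$ to $\tschub(w_S)$ by definition; so the containment is transported verbatim. One should double-check the weight bookkeeping so that the $q$-power $(\beta_i, \beta_j)$ transforms correctly under $w_{0,S}$, but since $w_{0,S}$ preserves the inner product this is automatic.

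The main obstacle I anticipate is the first half of the argument for (1): controlling exactly which PBW monomials can appear on the right-hand side of the $q$-commutation relation and showing they all live in $\schub(w_S)$ rather than spilling into the Levi part. The weight constraint $\alpha_i + \beta_j \in \roots^+(\lien_S) \cup (\text{non-roots})$ does the conceptual work, but one must combine it carefully with the convexity property of the root enumeration (if $i < k < j$ and $\beta_k = \beta_i + \beta_j$ then... — stated earlier in the excerpt) and with linear independence of PBW monomials to conclude rigorously that no $E_{\beta_\ell}$ with $\ell \leq M$ can occur. A secondary, more bookkeeping-heavy obstacle is verifying the $q$-Leibniz rule with the correct powers of $q$ so that the induction from generators to the full algebra closes; this is standard but must be done with care given the chosen coproduct conventions.
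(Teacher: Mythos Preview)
Your setup for (1) contains a structural error that undermines the argument. With $w_0 = w_{0,S}\, w_S$ and the corresponding root vectors $E_{\beta_1},\ldots,E_{\beta_N}$, the vectors $E_{\beta_{M+1}},\ldots,E_{\beta_N}$ are \emph{not} the generators of $\schub(w_S)$: by definition $E_{\beta_{M+k}} = T_{w_{0,S}}\bigl(T_{j_1}\cdots T_{j_{k-1}}(E_{j_k})\bigr)$, so they generate $T_{w_{0,S}}(\schub(w_S)) = \tschub(w_S)$. Moreover, placing $E_i = E_{\beta_1}$ at the \emph{start} forces you to compute $[E_{\beta_1},E_{\beta_j}]_q = [E_i,X]_q$, which is the order relevant to (2), not (1). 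The paper avoids both issues by choosing, for (1), a factorization $w_0 = w_S\, w'$ so that the generators of $\schub(w_S)$ occupy positions $1,\ldots,N$, and then picking the reduced word for $w'$ so that $E_{\beta_{N+1}} = E_i$. With $E_i$ placed \emph{immediately after} the cell, \cref{prop:q-commutation-relations} gives $[E_{\beta_j},E_{\beta_{N+1}}]_q$ as a combination of monomials in $E_{\beta_{j+1}},\ldots,E_{\beta_N}$ only, all of which lie in $\schub(w_S)$; no weight or convexity argument is needed. For (2) the paper uses the symmetric trick with $w_0 = w_{0,S}\, w_S$ and $E_i$ placed at position $M$, immediately \emph{before} the $\tschub(w_S)$-generators.

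Your weight argument, even setting aside the misidentification above, is not complete: the constraint that $\alpha_i + \beta_j$ has the right $\lien_S$-content does not by itself prevent monomials such as $E_{\beta_\ell}\,E_{\beta_m}$ with $\beta_\ell \in \roots^+(\liel_S)$ and $\beta_m \in \roots^+(\lien_S)$ from appearing on the right-hand side, since $\beta_\ell + \beta_m = \alpha_i + \beta_j$ is perfectly possible. Finally, your proposed transport of (1) to (2) via $T_{w_{0,S}}$ does not land where you want: since $T_{w_{0,S}}(E_i)$ is (up to scalar) $-F_{\sigma(i)} K_{\sigma(i)}$ rather than some $E_{i'}$, applying $T_{w_{0,S}}$ to $[\schub(w_S),E_i]_q \subseteq \schub(w_S)$ yields a statement about a commutator with an $F$-type element, not $[E_{i'},\tschub(w_S)]_q$. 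That idea is in fact used in the paper, but one step later (in the proof of \cref{prop:adjoint-action-schubert}) to handle $F_i \triangleright \tschub(w_S)$; part (2) itself is proven directly by the adjacent-placement trick.
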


\begin{proof}
(1) By definition the quantum Schubert cell $\schub(w_S)$ is generated by the quantum root vectors $\{ E_{\xi_j} \}_{j = 1}^N$ corresponding to a reduced decomposition of $w_S$.
Consider a factorization $w_0 = w_S w^\prime$ and denote by $\{ E_{\beta_j} \}_{j = 1}^{M + N}$ the corresponding quantum root roots. Then $E_{\beta_j} = E_{\xi_j}$ for $j \in \{ 1, \cdots, N \}$, while the other quantum root vectors belong to the quantized Levi factor $\UqlS$.
It suffices to show that $[E_{\xi_j}, E_i]_q \in \schub(w_S)$ for any $j \in \{ 1, \cdots, N \}$ and $i \in S$, since the general claim follows from the identity
\[
[X_\lambda Y_\mu, Z_\nu]_q = X_\lambda [Y_\mu, Z_\nu]_q + q^{(\mu, \nu)} [X_\lambda, Z_\nu]_q Y_\mu.
\]

We can choose the reduced decomposition for $w^\prime$ in such a way that $\beta_{N + 1} = \alpha_i$, that is $E_{\beta_{N + 1}} = E_i$.
Note that this does not change the quantum quantum root vectors $\{ E_{\xi_j} \}_{j = 1}^N$.
Then for any $j \in \{ 1, \cdots, N \}$ we consider the $q$-commutator $[E_{\xi_j}, E_i]_q = [E_{\beta_j}, E_{\beta_{N + 1}}]_q$.
By the $q$-commutation relations from \cref{prop:q-commutation-relations} we get $[E_{\beta_j}, E_{\beta_{N + 1}}]_q = \sum_\bfM c_\bfM E^\bfM$, where the sum can be restricted to the tuples $(m_1, \cdots, m_{M + N})$ with $m_k = 0$ for $k \leq j$ and $k \geq N + 1$.
Since $E_{\beta_k} \in \schub(w_S)$ for $k \in \{ j + 1, \cdots, N \}$, we get the result.

(2) The proof is similar to (1). Choosing a reduced decomposition for $w_0 = w_{0, S} w_S$, we have the quantum root vectors $\{ E'_{\gamma_j} \}_{j = 1}^{M + N}$.
Then the generators $\{ E'_{\xi_j} \}_{j = 1}^N$ of $\tschub(w_S)$ are given by $E'_{\xi_j} = E'_{\gamma_{M + j}}$ for $j \in \{ 1, \cdots, N \}$, while the other quantum root vectors belong to the quantized Levi factor $\UqlS$.
It suffices to prove the claim for the generators, since
\[
[X_\lambda, Y_\mu Z_\nu]_q = [X_\lambda, Y_\mu]_q Z_\nu + q^{(\lambda, \mu)} Y_\mu [X_\lambda, Z_\nu]_q.
\]

As above, we can choose the reduced decomposition for $w_{0, S}$ in such a way that $E'_{\gamma_M} = E_i$.
Then we consider the $q$-commutator $[E_i, E'_{\xi_j}]_q = [E'_{\gamma_M}, E'_{\gamma_{M + j}}]_q$.
An argument using the $q$-commutation relations as in (1) gives the result.
\end{proof}

The previous result shows that there is not much difference between $\schub(w_S)$ and $\tschub(w_S)$ from the point of view of the $q$-commutation relations.
However an important difference arises when we consider the left adjoint action, since we have the identity $E_i \triangleright X_\mu = [E_i, X_\mu]_q$ (in our conventions).
Then the previous $q$-commutation relations can be used to prove the following result originally due to Zwicknagl, see \cite[Theorem 5.21(b)]{zwicknagl}.

\begin{proposition}
\label{prop:adjoint-action-schubert}
For any subset $S \subseteq \simpleroots$, the twisted quantum Schubert cell $\tschub(w_S)$ is invariant under the left adjoint action of the quantized Levi factor $\UqlS$.
\end{proposition}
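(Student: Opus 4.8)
The plan is to reduce the statement to its algebra generators on both sides, dispatch the formal cases, and then isolate the one genuinely non-trivial point, namely the action of the $F_i$ with $i\in S$.

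Since $\triangleright$ makes $\Uqg$ a module algebra over $\UqlS$, and $\UqlS$ is in particular a subcoalgebra, for $x\in\UqlS$ one has $x\triangleright(ab)=(x_{(1)}\triangleright a)(x_{(2)}\triangleright b)$ with all $x_{(i)}\in\UqlS$, and $x\triangleright 1\in\bbC 1\subseteq\tschub(w_S)$. Hence it suffices to show $g\triangleright E'_{\xi_j}\in\tschub(w_S)$ whenever $g$ is one of the algebra generators $K_i^{\pm1}$ ($1\le i\le r$), $E_i$ or $F_i$ ($i\in S$) of $\UqlS$, and $E'_{\xi_j}=T_{w_{0,S}}(E_{\xi_j})$ runs over the quantum root vectors generating $\tschub(w_S)$ (those attached to the roots of $\roots^+(\lien_S)$ in the reduced word coming from $w_0=w_{0,S}w_S$). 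The Cartan case is immediate: $K_i^{\pm1}\triangleright X_\mu$ is a scalar multiple of $X_\mu$ and $\tschub(w_S)$ is spanned by weight vectors. For $E_i$ with $i\in S$, using $E_i\triangleright X_\mu=[E_i,X_\mu]_q$, the required inclusion $E_i\triangleright\tschub(w_S)\subseteq\tschub(w_S)$ is exactly \cref{lem:qcommutator-schubert}(2).

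The remaining case, $F_i$ with $i\in S$, where $F_i\triangleright X_\mu=[F_i,X_\mu]K_i$, is where the content lies; one cannot expect to bypass it, since a subalgebra invariant under the action of all $K_i$ and all $E_i$ with $i\in S$ need not be invariant under the $F_i$. The plan is to transport the question along the longest-element Lusztig automorphism $T_{w_{0,S}}$ of the Levi to $\schub(w_S)=T_{w_{0,S}}^{-1}(\tschub(w_S))$, where the companion statement \cref{lem:qcommutator-schubert}(1) applies. Concretely, for $i\in S$ one checks that conjugation by $T_{w_{0,S}}$ carries the operator $X\mapsto F_i\triangleright X$ into a right $q$-bracket operator $Y\mapsto[Y,E_j]_q$ for a suitable $j\in S$, up to a scalar depending only on the weight of $Y$; this is where the twisted interaction of the Lusztig automorphisms with the coproduct enters. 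Since $w_{0,S}$ permutes $\roots^+(\lien_S)$, the vectors $T_{w_{0,S}}^{-1}(E'_{\xi_j})$ are again generators of $\schub(w_S)$, so \cref{lem:qcommutator-schubert}(1) yields $[T_{w_{0,S}}^{-1}(E'_{\xi_j}),E_j]_q\in\schub(w_S)$, and applying $T_{w_{0,S}}$ gives $F_i\triangleright E'_{\xi_j}\in\tschub(w_S)$, completing the proof.

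The hard part will be making the intertwining statement in the previous paragraph precise, i.e. the bookkeeping for how $T_{w_{0,S}}$ fails to commute with the coproduct. A completely elementary fallback that avoids this is to compute $[F_i,E'_{\xi_j}]K_i$ directly in the PBW basis, using the $q$-commutation relations of \cref{prop:q-commutation-relations} together with the convexity of the chosen ordering of the positive roots: the Cartan factor $K_i$ produced by the bracket cancels precisely because the weight $w_{0,S}(\xi_j)-\alpha_i$ stays inside the cone spanned by $\roots^+(\lien_S)$, forcing the outcome into $\tschub(w_S)$. Either route mirrors the classical situation, where the adjoint action of $f_i$ with $i\in S$ merely moves a vector down the $\liel_S$-ladder inside $\lien_S$.
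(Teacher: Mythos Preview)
Your main approach is essentially the same as the paper's: reduce to generators, dispatch the Cartan and $E_i$ cases via \cref{lem:qcommutator-schubert}(2), and for $F_i$ with $i\in S$ transport along $T_{w_{0,S}}$ to convert $F_i\triangleright X_\mu$ into $[T_{w_{0,S}}^{-1}(X_\mu),E_j]_q$ (with $w_{0,S}(\alpha_i)=-\alpha_j$), then invoke \cref{lem:qcommutator-schubert}(1). The paper carries out exactly the intertwining computation you flag as the hard part (showing $T_{w_{0,S}}^{-1}(F_i)=-E_jK_j$ and unwinding the commutator); your proposed ``elementary fallback'' via \cref{prop:q-commutation-relations} is not clearly viable, since that result only governs $q$-commutators among \emph{positive} quantum root vectors and says nothing directly about $[F_i,E'_{\xi_j}]$.
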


\begin{proof}
Recall that the quantized Levi factor $\UqlS$ is generated by the Cartan elements, together with $E_i$ and $F_i$ with $i \in S$.
It is clear that $\tschub(w_S)$ is invariant under the action of the Cartan elements, since it is generated by weight vectors.
Next we note that, in our conventions for the coproduct, we have the identities
\[
\begin{split}
E_i \triangleright X_\mu & = E_i X_\mu - K_i X_\mu K_i^{-1} E_i = [E_i, X_\mu]_q, \\
F_i \triangleright X_\mu & = F_i X_\mu K_i - X_\mu F_i K_i = [F_i, X_\mu] K_i.
\end{split}
\]
Here $X_\mu$ is any weight vector.
We note that in the first case we have a $q$-commutator, while in the second case we have a regular commutator.
Then the first identity and \cref{lem:qcommutator-schubert} imply that $E_i \triangleright \tschub(w_S) \subseteq \tschub(w_S)$ for any $i \in S$.

Now consider $F_i \triangleright X_\mu = [F_i, X_\mu] K_i$ for $i \in S$.
Using the automorphism $T_{w_{0, S}}^{-1}$ we get
\[
T_{w_{0, S}}^{-1}(F_i \triangleright X_\mu) = [T_{w_{0, S}}^{-1}(F_i), T_{w_{0, S}}^{-1}(X_\mu)] T_{w_{0, S}}^{-1}(K_i).
\]
We must have $w_{0, S}(\alpha_i) = - \alpha_j$ for some $j \in S$, since $w_{0, S}$ is the longest word corresponding to $S$.
We rewrite this as $w_{0, S}(\alpha_j) = - \alpha_i$, since $w_{0, S}$ is involutive.
We can factorize $w_{0, S} = s_i w$ with $\ell(w) = \ell(w_{0, S}) - 1$ and $w(\alpha_j) = \alpha_i$.
Then it follows from \cite[Proposition 8.20]{jantzen} that $T_w(E_j) = E_i$.
Using this and \eqref{eq:lusztig-relations} we compute
\[
T_{w_{0, S}} (E_j K_j) = T_i T_w (E_j K_j) = T_i (E_i K_i) = - F_i.
\]
From this it follows that $T_{w_{0, S}}^{-1}(F_i) = - E_j K_j$.
This lets us rewrite
\[
T_{w_{0, S}}^{-1}(F_i \triangleright X_\mu) = - [E_j K_j, T_{w_{0, S}}^{-1}(X_\mu)] K_j^{-1}.
\]
Since $X_\mu$ has weight $\mu$, the element $T_{w_{0, S}}^{-1}(X_\mu)$ has weight $w_{0, S}^{-1}(\mu) = w_{0, S}(\mu)$. Then
\[
\begin{split}
T_{w_{0, S}}^{-1}(F_i \triangleright X_\mu)
& = T_{w_{0, S}}^{-1}(X_\mu) E_j - q^{(\alpha_j, w_{0, S}(\mu))} E_j T_{w_{0, S}}^{-1}(X_\mu) \\
& = [T_{w_{0, S}}^{-1}(X_\mu), E_j]_q.
\end{split}
\]
Therefore $F_i \triangleright X_\mu = T_{w_{0, S}} \left( [T_{w_{0, S}}^{-1}(X_\mu), E_j]_q \right)$, where $w_{0, S}(\alpha_i) = - \alpha_j$ and $j \in S$.

Applying this identity to any element of $\tschub(w_S)$ we get
\[
F_i \triangleright \tschub(w_S) \subseteq 
T_{w_{0, S}} \left( [T_{w_{0, S}}^{-1}(\tschub(w_S)), E_j]_q \right) = T_{w_{0, S}} \left( [\schub(w_S), E_j]_q \right).
\]
By \cref{lem:qcommutator-schubert} we have $[U(w_S), E_j]_q \subseteq U(w_S)$ for any $j \in S$. Hence
\[
F_i \triangleright \tschub(w_S) \subseteq T_{w_{0, S}} (\schub(w_S)) = \tschub(w_S),
\]
which concludes the proof.
\end{proof}

\begin{remark}
The result remains valid if we replace the Lusztig automorphisms $T_i$ with another choice $\tilde{T}_i$ such that $\tilde{T}_i(X_\mu) = t_{i, \mu} T_i(X_\mu)$ on weight vectors, as in \cref{rem:lusztig-automorphisms}.
\end{remark}

\subsection{Comparison}

Now we discuss the connection between the twisted quantum Schubert cells and the equivariant quantizations $\qnil = \qnil_1 \oplus \cdots \oplus \qnil_r$ as in \cref{prop:equivariant-quantizations}.

First we observe that, for any increasing sequence $S_1 \subset \cdots \subset S_r$ of proper subsets of $\simpleroots$, we have a corresponding factorization of the longest word of $\lieg$ of the form
\[
w_0 = w_1 \cdots w_r, \quad
\ell(w_0) = \ell(w_1) + \cdots + \ell(w_r).
\]
To obtain it, start from the factorization $w_0 = w_{0, S_r} w_{S_r}$ with respect to $S_r \subset \simpleroots$ and set $w_r = w_{S_r}$.
Then $w_{0, S_r}$ is the longest word of the semisimple Lie algebra $\lieg_r \subset \lieg$.
Proceeding in this way, we factorize $w_{0, S_{k + 1}} = w_{0, S_k} w_{S_k}$ with respect to $S_k \subset S_{k + 1}$ and set $w_k = w_{S_k}$.
This gives the factorization $w_0 = w_1 \cdots w_r$ with respect to the sequence $S_1 \subset \cdots \subset S_r$, where in the last step we use $w_{0, \emptyset} = 1$.
We note that a factorization of this type for $w_0$ is related to the notion of \emph{nice decomposition} introduced by Littelmann, see \cite[Section 4]{littelmann}.

Note that the Weyl group element $w_k$ occurs in the factorization of the longest word of $\lieg_{k + 1}$ with respect to the inclusion $S_k \subset S_{k + 1}$.
We have an embedding $U_q(\lieg_{k + 1}) \hookrightarrow \Uqg$ of $\Uqlk$-modules obtained by mapping the generators in the obvious way.
Then the twisted quantum Schubert cell $\tschub(w_k) \subset U_q(\lieg_{k + 1})$ can be transported to $\Uqg$ via this embedding, and we denote it by the same symbol in the next result.

\begin{proposition}
\label{prop:schubert-component}
Let $\qnil = \qnil_1 \oplus \cdots \oplus \qnil_r$ be an equivariant quantization corresponding to $S_1 \subset \cdots \subset S_r$. Then we have $\qnil_k \subset \tschub(w_k)$ for any $k \in \{ 1, \cdots, r \}$.
Moreover $\qnil_{k, 1}$ coincides with the degree-one component of $\tschub(w_k)$ with respect to the grading by $\alpha_{i_k}$.
\end{proposition}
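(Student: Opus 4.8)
The plan is to prove the two assertions separately: first that $\qnil_k \subset \tschub(w_k)$, then that $\qnil_{k,1}$ is exactly the degree-one part of $\tschub(w_k)$ in the $\alpha_{i_k}$-grading. The key point throughout is that both $\qnil_k$ and $\tschub(w_k)$ are $\Uqlk$-modules under the left adjoint action: for $\tschub(w_k)$ this is \cref{prop:adjoint-action-schubert} (applied inside $U_q(\lieg_{k+1})$ and transported via the embedding), and for $\qnil_k$ it holds by construction. So the containment can be checked one isotypic component at a time.

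\emph{Step 1: the degree-one component.} I would start by locating $\qnil_{k,1}$. By \cref{lem:action-degree-one} we have $\qnil_{k,1} = \Uqlk \triangleright E_{i_k}$, generated by the lowest weight vector $E_{i_k}$ of weight $\alpha_{i_k}$. Now $E_{i_k}$ is a quantum root vector for $\tschub(w_k)$: since $w_k = w_{S_k}$ and $\alpha_{i_k} \in S_{k+1}\setminus S_k$, one can choose the reduced decomposition of $w_{0,S_{k+1}} = w_{0,S_k} w_{S_k}$ so that the first quantum root vector produced beyond those of $\UqlS_k$ is precisely $E_{i_k}$ — equivalently, using the Lusztig automorphism $T_{w_{0,S_k}}$ in the definition $\tschub(w_k) = T_{w_{0,S_k}}(\schub(w_k))$, one tracks that $E_{i_k}$ is one of the generators of $\tschub(w_k)$ and it is the unique one of weight $\alpha_{i_k}$ (the root $\alpha_{i_k}$ has $\alpha_{i_k}$-degree one, and all roots in $\roots^+(\lien_k)$ of $\alpha_{i_k}$-degree one sit in a single simple $\liel_k$-module by \cref{prop:classical-module}). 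Since $\tschub(w_k)$ is $\Uqlk$-invariant, $\Uqlk \triangleright E_{i_k} \subseteq \tschub(w_k)$, and it lands in the $\alpha_{i_k}$-degree-one part because the adjoint action by the Levi factor does not change the $\alpha_{i_k}$-coefficient. Conversely, any weight appearing in $\tschub(w_k)$ of $\alpha_{i_k}$-degree one is a root in $\roots^+(\lien_k)$ of that degree, and these form the single simple $\liel_k$-module $\lien_{k,1}$; since each such weight space is one-dimensional (weights are roots) and $\tschub(w_k)$ is $\Uqlk$-stable, its degree-one part is an honest $\Uqlk$-submodule with the same character as $\qnil_{k,1}$, hence equals it. This gives the "moreover" clause.

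\emph{Step 2: higher-degree components.} For $n > 1$, by \cref{def:equivariant-quantization} the component $\qnil_{k,n}$ is a simple $\Uqlk$-submodule of the $n$-fold product $\qnil_{k,1} \cdots \qnil_{k,1}$ inside $\Uqn$. But $\tschub(w_k)$ is a \emph{subalgebra} of $\Uqn$ (it is a Lusztig twist of the subalgebra $\schub(w_k)$, and $T_{w_{0,S_k}}$ is an algebra automorphism), and by Step 1 it contains $\qnil_{k,1}$. Hence it contains the $n$-fold product $\qnil_{k,1}\cdots\qnil_{k,1}$, and in particular $\qnil_{k,n} \subseteq \tschub(w_k)$. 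Summing over $n$ from $1$ to $N_k$ gives $\qnil_k \subseteq \tschub(w_k)$, as claimed.

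\emph{Main obstacle.} The routine parts are the two algebra/module closure arguments; the one place requiring care is the bookkeeping in Step 1 — verifying that $E_{i_k}$ genuinely occurs as a generator of $\tschub(w_k)$ for a suitable choice of reduced word, and pinning down which reduced decompositions of $w_0 = w_1\cdots w_r$ are compatible with the nested factorization and with the identification of $\tschub(w_k)$ inside $U_q(\lieg_{k+1}) \hookrightarrow \Uqg$. This is the same combinatorics of "nice decompositions" alluded to before the statement, so I would cite \cite{littelmann} for the existence of compatible reduced words and then read off the weight-$\alpha_{i_k}$ generator directly. Everything else follows from \cref{prop:adjoint-action-schubert}, \cref{lem:action-degree-one} and \cref{prop:classical-module}.
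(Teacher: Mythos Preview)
Your proposal is correct and follows essentially the same approach as the paper: show $E_{i_k} \in \tschub(w_k)$, use $\Uqlk$-invariance of $\tschub(w_k)$ (\cref{prop:adjoint-action-schubert}) together with \cref{lem:action-degree-one} to identify $\qnil_{k,1}$ with the degree-one part, and then conclude by the subalgebra property of $\tschub(w_k)$ and the definition of $\qnil_{k,n}$ as sitting inside the $n$-fold product of $\qnil_{k,1}$. Your treatment of the converse inclusion in Step~1 (via a character/dimension count using that the degree-one weight spaces are one-dimensional roots) is a bit more explicit than the paper's, and the bookkeeping worry you flag about locating $E_{i_k}$ among the generators is handled in the paper simply by noting that in the enumeration for $w_0 = w_{0,S} w_S$ the Levi roots come first --- so no appeal to \cite{littelmann} is actually needed.
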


\begin{proof}
For any $k \in \{ 1, \cdots, r \}$ we have $E_{i_k} \in \tschub(w_k)$.
Indeed, for the enumeration of the positive roots corresponding to $w_0 = w_{0, S} w_S$, those of the Levi factor $\liel_S$ occur before those of the positive nilradical $\lien_S$.
Recall that the twisted quantum Schubert cell $\tschub(w_k)$ is invariant under the left adjoint action of $\Uqlk$, by \cref{prop:adjoint-action-schubert}.
Then this action on $E_{i_k}$ generates the degree-one component of $\tschub(w_k)$ with respect to $\alpha_{i_k}$, since this is not a root of $\liel_k$.
By \cref{lem:action-degree-one} the degree-one component $\qnil_{k, 1}$ is also generated by $E_{i_k}$ under the left adjoint action of $\Uqlk$, hence the two must coincide.
Finally, by construction any graded component $\qnil_{k, n}$ is contained in the $n$-fold product of $\qnil_{k, 1}$, hence it is contained in $\tschub(w_k)$.
\end{proof}

This result implies that each degree-one component $\qnil_{k, 1}$ consists of quantum root vectors corresponding to some reduced decomposition of $w_0$, which makes its description more explicit.
We are going to make use of this fact later in \cref{prop:comparison-heckenberger-kolb}.

\section{Quasitriangular Hopf algebras and transmutation}
\label{sec:quasitriangular}

In this section we momentarily abandon the quantized enveloping algebra $\Uqg$ to discuss some general results on quasitriangular Hopf algebras, which are going to be used in the next section.
The overall theme is the comparison of two left actions of a Hopf algebra $H$ on the tensor product $H \otimes H$.
As we are going to discuss, in the quasitriangular case this comparison can be achieved by a form of transmutation, a notion introduced by Majid.
We provide some details for this result, as we could not find it in the literature in the form we require.
Finally we discuss a formula, which we refer to as the quantum shuffle product, for a certain type of products in the braided tensor product algebra $H \otimes_\sigma H$.

\subsection{Actions on the tensor product}

Let $H$ be a Hopf algebra and denote as before the left adjoint action of $H$ on itself by $x \triangleright y = x_{(1)} y S(x_{(2)})$.
Then we have the tensor product action of $H$ on $H \otimes H$, denoted by the same symbol, given by
\[
x \triangleright (y \otimes z) := x_{(1)} \triangleright y \otimes x_{(2)} \triangleright z.
\]
In general we have $x \triangleright (y y' \otimes z z') \neq x_{(1)} \triangleright (y \otimes z) \cdot x_{(2)} \triangleright (y' \otimes z')$, so that $H \otimes H$ is not a $H$-module algebra with respect to $\triangleright$ (but see \cref{prop:braided-product} for the quasitriangular case).
Furthermore let us consider the coproduct $\Delta(x \triangleright y)$. Then a quick computation shows that
\[
\Delta(x \triangleright y) = x_{(1)} y_{(1)} S(x_{(3)}) \otimes x_{(2)} \triangleright y_{(2)}.
\]
In general this does not coincide with $x \triangleright \Delta(y)$, unless $H$ is cocommutative.

On the other hand, we can introduce a second left action $\blacktriangleright$ of $H$ on $H \otimes H$, which reduces to the previous identity on the subspace $\Delta(H) \subseteq H \otimes H$, given by
\[
x \blacktriangleright (y \otimes z) := x_{(1)} y S(x_{(3)}) \otimes x_{(2)} \triangleright z.
\]
Below we discuss some of its basic properties.

\begin{proposition}
The map $\blacktriangleright$ satisfies the following properties:
\begin{enumerate}
\item It is a left action of $H$ on $H \otimes H$.
\item It makes $H \otimes H$ into a $H$-module algebra.
\item It satisfies $x \blacktriangleright \Delta(y) = \Delta(x \triangleright y)$.
\end{enumerate}
\end{proposition}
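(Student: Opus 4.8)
The plan is to establish all three statements by direct manipulations in Sweedler notation, using only the Hopf algebra axioms for $H$ together with two facts already recorded above: that the left adjoint action $\triangleright$ is a genuine left action of $H$ on itself, and that it makes $H$ into an $H$-module algebra. It is convenient to note at the outset that the iterated coproduct $\Delta^{(n)}$ is an algebra homomorphism, so that $(xx')_{(i)} = x_{(i)} x'_{(i)}$ in any fixed iteration, and that $\Delta \circ S = (S \otimes S) \circ \Delta^{\mathrm{op}}$; both are used repeatedly.

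For (1), I would first dispose of the unit axiom, $1 \blacktriangleright (y \otimes z) = 1\,y\,S(1) \otimes 1 \triangleright z = y \otimes z$. For the composition law $x \blacktriangleright \big(x' \blacktriangleright (y \otimes z)\big) = (xx') \blacktriangleright (y \otimes z)$, I would expand the left-hand side, observe that the two antipode factors in the first leg combine as $S(x'_{(3)})\,S(x_{(3)}) = S\big(x_{(3)}\,x'_{(3)}\big)$, and that the two actions in the second leg combine as $x_{(2)} \triangleright \big(x'_{(2)} \triangleright z\big) = \big(x_{(2)} x'_{(2)}\big) \triangleright z$ because $\triangleright$ is a left action. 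The outcome is $x_{(1)} x'_{(1)}\, y\, S\big(x_{(3)} x'_{(3)}\big) \otimes \big(x_{(2)} x'_{(2)}\big) \triangleright z$, which is exactly $(xx') \blacktriangleright (y \otimes z)$ once one uses that $\Delta^{(2)}$ is an algebra homomorphism.

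For (2), the unit object is handled by $x \blacktriangleright (1 \otimes 1) = x_{(1)}\,S(x_{(3)}) \otimes \epsilon(x_{(2)})\,1 = x_{(1)}\,S(x_{(2)}) \otimes 1 = \epsilon(x)\,1 \otimes 1$. For compatibility with the product I would expand both sides of
\[
x \blacktriangleright \big((y \otimes z)(y' \otimes z')\big) = \big(x_{(1)} \blacktriangleright (y \otimes z)\big)\big(x_{(2)} \blacktriangleright (y' \otimes z')\big)
\]
over a sufficiently high iterated coproduct of $x$. On the right-hand side the first tensor leg then contains the factor $S(x_{(3)})\,x_{(4)}$, which collapses to a counit by the antipode axiom, while the second leg contains $\big(x_{(2)} \triangleright z\big)\big(x_{(3)} \triangleright z'\big) = x_{(2)} \triangleright (z z')$ since $\triangleright$ is a module-algebra action; after relabeling the surviving legs one recovers $x_{(1)}\,y y'\, S(x_{(3)}) \otimes x_{(2)} \triangleright (z z')$, which is exactly the left-hand side $x \blacktriangleright (y y' \otimes z z')$.

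For (3), I would expand $x \blacktriangleright \Delta(y) = x_{(1)}\,y_{(1)}\,S(x_{(4)}) \otimes x_{(2)}\,y_{(2)}\,S(x_{(3)})$, rewriting $x_{(2)} \triangleright y_{(2)} = x_{(2)}\,y_{(2)}\,S(x_{(3)})$ after splitting the appropriate leg, and compare with $\Delta(x \triangleright y) = \Delta\big(x_{(1)}\,y\,S(x_{(2)})\big)$, using that $\Delta$ is an algebra map and that $\Delta(S(x_{(2)})) = S(x_{(4)}) \otimes S(x_{(3)})$ in the relabeled coproduct; both sides reduce to the same expression. I do not expect any genuine obstacle: the only point requiring care is the bookkeeping of Sweedler legs when passing between iterated coproducts of $x$ via coassociativity, together with the order reversal that $S$ introduces in the third part.
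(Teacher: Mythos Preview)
Your proposal is correct and follows essentially the same approach as the paper: direct Sweedler manipulations using that $\triangleright$ is a module-algebra action and that $S$ is an anti-homomorphism. The only cosmetic differences are that the paper runs the computation in part (2) in the opposite direction (inserting $S(x_{(3)})x_{(4)}$ into the left-hand side rather than collapsing it on the right), and that the paper treats (3) as immediate from the definition since the identity $\Delta(x \triangleright y) = x_{(1)} y_{(1)} S(x_{(3)}) \otimes x_{(2)} \triangleright y_{(2)}$ was already displayed just before the proposition.
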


\begin{proof}
(1) Using the fact that $\triangleright$ is a left action and $S$ an anti-homomorphism, we compute
\[
\begin{split}
x x' \blacktriangleright (y \otimes z)
& = x_{(1)} x'_{(1)} y S(x_{(3)} x'_{(3)}) \otimes x_{(2)} x'_{(2)} \triangleright z \\
& = x_{(1)} x'_{(1)} y S(x'_{(3)}) S(x_{(3)}) \otimes x_{(2)} \triangleright x'_{(2)} \triangleright z \\
& = x \blacktriangleright x' \blacktriangleright (y \otimes z).
\end{split}
\]
(2) Using the module algebra property of $\triangleright$ and $S(x_{(1)}) x_{(2)} = \varepsilon(x)$ we compute
\[
\begin{split}
x \blacktriangleright (y y' \otimes z z')
& = x_{(1)} y y' S(x_{(4)}) \otimes (x_{(2)} \triangleright z) (x_{(3)} \triangleright z') \\
& = x_{(1)} y S(x_{(3)}) x_{(4)} y' S(x_{(6)}) \otimes (x_{(2)} \triangleright z) (x_{(5)} \triangleright z') \\
& = x_{(1)} \blacktriangleright (y \otimes z) \cdot x_{(2)} \blacktriangleright (y' \otimes z').
\end{split}
\]
(3) This is immediate from the definition of $\blacktriangleright$.
\end{proof}

We are interested in comparing the two actions $\triangleright$ and $\blacktriangleright$, in some suitable sense.
This can be done in the case of a \emph{quasitriangular} Hopf algebra, which we now review.

\subsection{Quasitriangular Hopf algebras}

Let $H$ be a quasitriangular Hopf algebra.
This means that $H$ admits an \emph{R-matrix}, which is an invertible element $\calR \in H \otimes H$ such that
\[
\begin{gathered}
\calR \cdot \Delta(h) \cdot \calR^{-1} = \Delta^\op(h), \quad h \in H, \\
(\Delta \otimes \id) (\calR) = \calR_{1 3} \cdot \calR_{2 3}, \quad
(\id \otimes \Delta) (\calR) = \calR_{1 3} \cdot \calR_{1 2}.
\end{gathered}
\]
From these properties it follows that
\[
\begin{gathered}
(\varepsilon \otimes \id) (\calR) = 1, \quad
(\id \otimes \varepsilon) (\calR) = 1, \\
(S \otimes \id) (\calR) = \calR^{-1}, \quad
(\id \otimes S) (\calR^{-1}) = \calR.
\end{gathered}
\]

Given an R-matrix $\calR$ as above, the corresponding \emph{braiding} is defined to be $\hat{R} := \tau \circ \calR$, where $\tau$ denotes the flip map.
The name is related to the fact that $\hat{R}$ turns the category of finite-dimensional $H$-modules into a braided monoidal category.
We also denote this braiding by $\sigma$ in the following, mostly for typographical reasons.

The following results is well-known, see for instance \cite[Corollary 9.2.13]{majid-book} (and note that the result in this reference should read $H$-modules as opposed to $H$-comodules).

\begin{proposition}
\label{prop:braided-product}
Let $H$ be a quasitriangular Hopf algebra with braiding $\sigma$.
Let $A$ and $B$ be left $H$-module algebras.
Then the \emph{braided tensor product} $A \otimes_\sigma B$ is the algebra given by $A \otimes B$ as a vector space and with product
\[
(a \otimes b) \cdot_\sigma (c \otimes d) = \mu_1 \mu_3 \sigma_2 (a \otimes b \otimes c \otimes d).
\]
It is a left $H$-module algebra with respect to the tensor product action $\triangleright$.
\end{proposition}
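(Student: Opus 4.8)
The plan is to identify $A \otimes_\sigma B$ with the standard braided tensor product of algebra objects in the braided monoidal category $\calC$ of left $H$-modules, whose braiding on a tensor product $V \otimes W$ is $\sigma_{V, W} = \hat{R}_{V, W} = \tau \circ \calR$. The inputs I would use are the consequences of quasitriangularity already recorded above: each $\sigma_{V, W} \colon V \otimes W \to W \otimes V$ is $H$-linear (this is the relation $\calR \Delta(h) \calR^{-1} = \Delta^{\op}(h)$), the family $\sigma$ is natural in both variables, and it obeys the hexagon identities
\[
\sigma_{U \otimes V, W} = (\sigma_{U, W} \otimes \id_V) \circ (\id_U \otimes \sigma_{V, W}), \qquad
\sigma_{U, V \otimes W} = (\id_V \otimes \sigma_{U, W}) \circ (\sigma_{U, V} \otimes \id_W),
\]
which are the relations $(\Delta \otimes \id)(\calR) = \calR_{13} \calR_{23}$ and $(\id \otimes \Delta)(\calR) = \calR_{13} \calR_{12}$ read through the flip $\tau$. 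Writing $\calR = \calR' \otimes \calR''$ with summation suppressed, the product in the statement is $(a \otimes b) \cdot_\sigma (c \otimes d) = a (\calR'' \triangleright c) \otimes (\calR' \triangleright b) d$, that is the composite $m := (\mu_A \otimes \mu_B) \circ (\id_A \otimes \sigma_{B, A} \otimes \id_B)$ on $(A \otimes B) \otimes (A \otimes B)$, where $\mu_A, \mu_B$ denote the multiplications of $A, B$.

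First I would check that $(A \otimes B, m, 1_A \otimes 1_B)$ is an associative unital algebra. For the unit, the module-algebra identity $h \triangleright 1 = \varepsilon(h) 1$ together with $(\varepsilon \otimes \id)(\calR) = 1 = (\id \otimes \varepsilon)(\calR)$ gives $\sigma_{B, A}(1_B \otimes c) = c \otimes 1_B$ and $\sigma_{B, A}(b \otimes 1_A) = 1_A \otimes b$, so that $1_A \otimes 1_B$ is a two-sided unit for $m$. For associativity I would expand both $\big( (a \otimes b) \cdot_\sigma (c \otimes d) \big) \cdot_\sigma (e \otimes f)$ and $(a \otimes b) \cdot_\sigma \big( (c \otimes d) \cdot_\sigma (e \otimes f) \big)$ and transform one into the other, using in turn: naturality of $\sigma$ with respect to the $H$-linear maps $\mu_A$ and $\mu_B$, valid precisely because $A$ and $B$ are $H$-module algebras, so that braidings commute past multiplications; the hexagon identities to distribute a braiding over the relevant pair of tensor factors; and finally the associativity of $\mu_A$ and $\mu_B$. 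Equivalently this is a direct computation on elements whose only ingredients are $(\Delta \otimes \id)(\calR) = \calR_{13} \calR_{23}$, $(\id \otimes \Delta)(\calR) = \calR_{13} \calR_{12}$ and $h \triangleright (xy) = (h_{(1)} \triangleright x)(h_{(2)} \triangleright y)$.

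Second I would verify the $H$-module algebra property for the tensor product action $\triangleright$ on $A \otimes B$. The unit $1_A \otimes 1_B$ is $H$-invariant since $1_A$ and $1_B$ are, and $m = (\mu_A \otimes \mu_B) \circ (\id_A \otimes \sigma_{B, A} \otimes \id_B)$ is a composite of $H$-linear maps ($\mu_A, \mu_B$ by the module-algebra hypothesis, $\sigma_{B, A}$ by quasitriangularity), hence $H$-linear; spelled out, this is the identity $x \triangleright \big( (a \otimes b) \cdot_\sigma (c \otimes d) \big) = \big( x_{(1)} \triangleright (a \otimes b) \big) \cdot_\sigma \big( x_{(2)} \triangleright (c \otimes d) \big)$. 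The single step that requires genuine care — the \emph{main obstacle} — is associativity, where one must apply the two hexagon relations to the correct pair of tensor legs and use the module-algebra axiom to pull $\sigma$ through $\mu_A$ and $\mu_B$; once it is fixed which leg of $\calR$ acts on which slot, the identity collapses to the quasitriangularity axioms and the associativity of $\mu_A$ and $\mu_B$. This is essentially the content of \cite[Corollary 9.2.13]{majid-book}, modulo the modules-versus-comodules correction noted above.
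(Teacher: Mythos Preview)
Your proposal is correct and follows the standard argument for the braided tensor product of module algebras. Note, however, that the paper does not actually prove this proposition: it is stated as well-known with a reference to \cite[Corollary 9.2.13]{majid-book} (together with the modules-versus-comodules correction you also flag), so there is no paper-proof to compare against beyond that citation.
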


In this result $\mu: A \otimes A \to A$ denotes the multiplication map on an algebra $A$ and we use the standard leg-numbering notation, a convention we shall keep in the following.

\subsection{Transmutation}

As discussed at the beginning of this section, the two left actions $\triangleright$ and $\blacktriangleright$ of $H$ on $H \otimes H$ do not coincide, in general.
To compare them we can use the theory of \emph{transmutation}, due to Majid \cite{majid-transmutation, majid-braided}.
This takes as input a quasitriangular Hopf algebra $H$ and produces a braided Hopf algebra $\underline{H}$.
In particular we can compare the subspaces $\Delta(H)$ and $\underline{\Delta}(H)$ of $H \otimes H$, where $\underline{\Delta}$ is the a modified coproduct of $\underline{H}$.
For more details on this see for instance \cite[Theorem 7.4.2 and Example 9.4.9]{majid-book}.

The setting we consider here is a bit different, since we want to work directly with $H \otimes H$ rather than the subspaces described above.
As we could not find this result stated in this precise form anywhere, we provide some details of its proof.

\begin{theorem}
\label{thm:transmutation}
Let $H$ be a quasitriangular Hopf algebra with universal R-matrix $\calR = \sum_i a_i \otimes b_i$ and braiding $\sigma$.
Define the map $\tmap: H \otimes H \to H \otimes H$ by
\[
\tmap(x \otimes y) = \sum_i x b_i \otimes a_i \triangleright y.
\]
Then we have an isomorphism of $H$-module algebras $\tmap: (H \otimes_\sigma H, \triangleright) \to (H \otimes H, \blacktriangleright)$.
\end{theorem}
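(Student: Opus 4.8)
The statement is a concrete incarnation of Majid's transmutation, so the plan is to verify directly the three properties required of an isomorphism of $H$-module algebras: that $\tmap$ is $H$-equivariant from $\triangleright$ to $\blacktriangleright$; that $\tmap$ is an algebra homomorphism from $(H \otimes H, \cdot_\sigma)$ to $H \otimes H$ equipped with the ordinary tensor product multiplication; and that $\tmap$ is a bijection. I would organise the argument into these three separate steps.

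For equivariance I would expand both sides using $h \triangleright (x \otimes y) = \sum h_{(1)} \triangleright x \otimes h_{(2)} \triangleright y$, the formula $h_{(1)} \triangleright x = h_{(1)} x S(h_{(2)})$, and the module-algebra property of $\triangleright$ on $H$. This gives $\tmap(h \triangleright (x \otimes y)) = \sum_i h_{(1)} x S(h_{(2)}) b_i \otimes (a_i h_{(3)}) \triangleright y$, while the definition of $\blacktriangleright$ yields $h \blacktriangleright \tmap(x \otimes y) = \sum_i h_{(1)} x b_i S(h_{(3)}) \otimes (h_{(2)} a_i) \triangleright y$. Peeling off the common factor $h_{(1)} x$ on the first leg (via the counit axiom), the claim reduces to the identity $\sum_i S(h_{(1)}) b_i \otimes a_i h_{(2)} = \sum_i b_i S(h_{(2)}) \otimes h_{(1)} a_i$, which is the quasitriangularity relation $\calR \Delta(h) = \Delta^{\op}(h) \calR$ after applying $S$ to one tensor leg and using the antipode identities. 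Alternatively, since $H \otimes_\sigma H$ is generated by $H \otimes 1$ and $1 \otimes H$, it suffices to check equivariance on these two subspaces separately, which is quicker but relies on the same relation.

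The algebra-homomorphism property is the computational heart. First I would unwind the braided product: the braiding acts by $\hat{R}_{V,W}(v \otimes w) = \sum_i (b_i \triangleright w) \otimes (a_i \triangleright v)$ for the adjoint action, so $(x \otimes y) \cdot_\sigma (x' \otimes y') = \sum_i x (b_i \triangleright x') \otimes (a_i \triangleright y) y'$. Applying $\tmap$, expanding $b_i \triangleright x' = b_{i(1)} x' S(b_{i(2)})$, using the module-algebra property once more, and comparing with $\tmap(x \otimes y) \cdot \tmap(x' \otimes y')$ computed in the ordinary tensor product algebra, the claim reduces to the identity in $H^{\otimes 4}$
\[
\sum b_{i(1)} \otimes S(b_{i(2)}) b_j \otimes a_{j(1)} a_i \otimes a_{j(2)} = \sum b_k \otimes b_l \otimes a_k \otimes a_l .
\]
I would prove this by splitting $a_j$ using $(\Delta \otimes \id)\calR = \calR_{13} \calR_{23}$ and $b_i$ using $(\id \otimes \Delta)\calR = \calR_{13} \calR_{12}$; after this substitution the contributions of two of the four resulting copies of $\calR$ decouple and collapse via the sub-identity $\sum_{m,p} S(b_m) b_p \otimes a_p a_m = 1 \otimes 1$, itself a consequence of $(\id \otimes \Delta)\calR = \calR_{13} \calR_{12}$, the antipode axiom $\sum S(b_{(1)}) b_{(2)} = \varepsilon(b)$, and $(\id \otimes \varepsilon)\calR = 1$. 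I expect the main obstacle to be precisely keeping the Sweedler indices and $\calR$-legs under control so that this collapse becomes visible; no input beyond the hexagon axioms and the antipode and counit identities is needed.

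For bijectivity, the naive guess built from $\calR^{-1}$ fails, because the adjoint action overlaps the multiplication on the first leg; the correct inverse is $\tmap^{-1}(x \otimes y) = \sum_i x S(b_i) \otimes a_i \triangleright y$. I would verify $\tmap \circ \tmap^{-1} = \id = \tmap^{-1} \circ \tmap$ using the identities $\sum a_i a_j \otimes b_j S(b_i) = 1 \otimes 1$ and $\sum a_i a_j \otimes S(b_j) b_i = 1 \otimes 1$, both obtained from $(\id \otimes \Delta)\calR = \calR_{13} \calR_{12}$ together with the antipode and counit axioms, by the same mechanism as in the previous step. Combining the three steps shows that $\tmap$ is an isomorphism of $H$-module algebras.
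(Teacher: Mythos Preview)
Your proposal is correct and follows the same three-part structure as the paper's proof (invertibility, equivariance, homomorphism, all verified directly from the R-matrix axioms), with the same formula for $\tmap^{-1}$. The one noteworthy difference is in the homomorphism step: you expand $b_i \triangleright x' = b_{i(1)} x' S(b_{i(2)})$ and then reduce to a four-tensor identity that collapses via $\sum_{m,p} S(b_m) b_p \otimes a_p a_m = 1 \otimes 1$; the paper instead invokes the identity $x y = (x_{(1)} \triangleright y)\, x_{(2)}$ to rewrite $b_i x'$ as $(b_{i(1)} \triangleright x')\, b_{i(2)}$, which keeps the adjoint action intact and lets the two hexagon relations match the two sides directly without ever introducing (and then cancelling) an antipode. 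This is a small but useful shortcut you might want to absorb. In the equivariance step your ``peeling off $h_{(1)} x$'' is phrased loosely, but your alternative route via the generators $H \otimes 1$ and $1 \otimes H$ (once the homomorphism property is in hand) is clean and correct; the paper instead computes $z \blacktriangleright \tmap(x \otimes y)$ directly, inserting $S(z_{(4)}) z_{(5)}$ via the counit axiom and then applying $\calR_{21}\,\Delta^{\op}(S(z)) = \Delta(S(z))\,\calR_{21}$.
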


\begin{proof}
\textbf{Invertibility}. Using the properties of the R-matrix one easily checks that
\[
\tmap^{-1}(x \otimes y) = \sum_i x S(b_i) \otimes a_i \triangleright y.
\]
\textbf{Equivariance}. Using the definition of the left action $\blacktriangleright$ we compute
\[
\begin{split}
z \blacktriangleright \tmap(x \otimes y)
& = \sum_i z_{(1)} x b_i S(z_{(3)}) \otimes z_{(2)} a_i \triangleright y \\
& = \sum_i z_{(1)} x b_i S(z_{(3)}) \otimes z_{(2)} a_i S(z_{(4)}) z_{(5)} \triangleright y.
\end{split}
\]
Using $\calR_{2 1} \cdot \Delta^\op(x) = \Delta(x) \cdot \calR_{2 1}$ and $\Delta^\op(S(x)) = S(x_{(1)}) \otimes S(x_{(2)})$ we get
\[
\begin{split}
z \blacktriangleright \tmap(x \otimes y)
& = \sum_i z_{(1)} x S(z_{(4)}) b_i \otimes z_{(2)} S(z_{(3)}) a_i z_{(5)} \triangleright y \\
& = \sum_i z_{(1)} x S(z_{(2)}) b_i \otimes a_i z_{(3)} \triangleright y \\
& = \sum_i (z_{(1)} \triangleright x) b_i \otimes a_i \triangleright z_{(2)} \triangleright y = \tmap(z \triangleright (x \otimes y)).
\end{split}
\]
\textbf{Homomorphism}. The coproduct identities for $\calR = \sum_i a_i \otimes b_i$ can be written as
\begin{equation}
\label{eq:identities-R}
\begin{split}
\sum_i a_{i (1)} \otimes a_{i (2)} \otimes b_i & = \sum_{i, j} a_i \otimes a_j \otimes b_i b_j, \\
\sum_i a_i \otimes b_{i (1)} \otimes b_{i (2)} & = \sum_{i, j} a_i a_j \otimes b_j \otimes b_i.
\end{split}
\end{equation}
Using $x y = (x_{(1)} \triangleright y) x_{(2)}$ and the second identity in \eqref{eq:identities-R}, we compute
\[
\begin{split}
\tmap(x \otimes y) \cdot \tmap(x' \otimes y')
& = \sum_{i, k} x b_i x' b_k \otimes (a_i \triangleright y) (a_k \triangleright y') \\
& = \sum_{i, k} x (b_{i (1)} \triangleright x') b_{i (2)} b_k \otimes (a_i \triangleright y) (a_k \triangleright y') \\
& = \sum_{i, j, k} x (b_j \triangleright x') b_i b_k \otimes (a_i \triangleright a_j \triangleright y) (a_k \triangleright y').
\end{split}
\]
On the other hand, using the first identity in \eqref{eq:identities-R} we compute
\[
\begin{split}
\tmap(x \tilde{x} \otimes \tilde{y} y')
& = \sum_i x \tilde{x} b_i \otimes (a_{i (1)} \triangleright \tilde{y}) (a_{i (2)} \triangleright y') \\
& = \sum_{i, k} x \tilde{x} b_i b_k \otimes (a_i \triangleright \tilde{y}) (a_k \triangleright y').
\end{split}
\]
Setting $\tilde{x} \otimes \tilde{y} = \sum_j b_j \triangleright x' \otimes a_j \triangleright y$ gives $\tmap(x \tilde{x} \otimes \tilde{y} y') = \tmap(x \otimes y) \cdot \tmap(x' \otimes y')$.
Finally we note that since $\tilde{x} \otimes \tilde{y} = \sigma(x' \otimes y)$ we have $x \tilde{x} \otimes \tilde{y} y' = (x \otimes y) \cdot_\sigma (x' \otimes y')$.
\end{proof}

\begin{remark}
The relation to Majid's transmutation is described as follows.
If we restrict the inverse map $\tmap^{-1}$ to the subspace $\Delta(H) \subseteq H \otimes H$, we obtain
\[
\tmap^{-1} \circ \Delta(h) = \sum_i h_{(1)} S(b_i) \otimes a_i \triangleright h_{(2)}.
\]
Comparing with the braided coproduct $\underline{\Delta}$ of $H$, as defined  in \cite[Theorem 7.4.2]{majid-book}, we see that we have the relation $\tmap^{-1} \circ \Delta = \underline{\Delta}$ between the two coproducts.
\end{remark}

The previous result can be generalized to the relative setting, where $K$ is a quasitriangular Hopf algebra and we have an injective Hopf algebra map $\iota: K \to H$.
Since we need this setting in the next section, we spell out some of the relevant details.

We can consider $K$ as a Hopf subalgebra of $H$ via the map $\iota$. As such, we get a left adjoint action of $K$ on $H$ as the restriction of the left adjoint action of $H$ on itself. Similarly, we obtain the left actions $\triangleright$ and $\blacktriangleright$ of $K$ on the tensor product $H \otimes H$.
Finally, transporting the R-matrix $\calR_K$ of $K$ via the map $\iota$, we obtain the following.

\begin{corollary}
\label{cor:relative-transmutation}
Let $K$ be a quasitriangular Hopf algebra with universal R-matrix $\calR_K$ and braiding $\sigma_K$.
Let $H$ be a Hopf algebra and $\iota: K \to H$ and injective Hopf algebra map.
Write $(\iota \otimes \iota) (\calR_K) = \sum_i a_i \otimes b_i$. Define the map $\tmap_K: H \otimes H \to H \otimes H$ by
\[
\tmap_K(x \otimes y) = \sum_i x b_i \otimes a_i \triangleright y.
\]
Then we have an isomorphism of $K$-module algebras $\tmap_K: (H \otimes_{\sigma_K} H, \triangleright) \to (H \otimes H, \blacktriangleright)$.
\end{corollary}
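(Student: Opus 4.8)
The plan is to treat \cref{cor:relative-transmutation} as the relative version of \cref{thm:transmutation}: the map $\tmap_K$ is literally the map $\tmap$ of that theorem with $\calR$ replaced by $(\iota \otimes \iota)(\calR_K)$, so the strategy is to rerun the same three-part proof while checking that every ingredient survives the passage from $H$ to the pair $\iota \colon K \to H$. First I would fix notation by identifying $K$ with the Hopf subalgebra $\iota(K) \subseteq H$ — legitimate since $\iota$ is an injective Hopf algebra map — so that the left adjoint action of $K$ on $H$ is the restriction of the adjoint action of $H$ on itself, and the elements $a_i, b_i$ in $(\iota \otimes \iota)(\calR_K) = \sum_i a_i \otimes b_i$ lie in $\iota(K)$. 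Since $\iota$ is simultaneously an algebra and a coalgebra map, applying $\iota^{\otimes 2}$ and $\iota^{\otimes 3}$ to the axioms for $\calR_K$ shows that $(\iota \otimes \iota)(\calR_K)$ satisfies, inside $H \otimes H$ and $H^{\otimes 3}$ respectively, the quasi-cocommutativity relation $\calR_K \cdot \Delta(z) = \Delta^\op(z) \cdot \calR_K$ for all $z \in K$, the two coproduct identities, and hence the derived counit and antipode identities; in particular the analogues of \eqref{eq:identities-R} hold in $H^{\otimes 3}$.

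Next I would check that the two objects in the statement make sense. On the source side, $H$ need not itself be quasitriangular: all that is used is that $H$ is a left $K$-module algebra under $\triangleright$, which is automatic because $K$ is a Hopf subalgebra of $H$. Hence \cref{prop:braided-product}, applied with the quasitriangular Hopf algebra $K$ and the left $K$-module algebras $A = B = H$, produces the braided tensor product algebra $H \otimes_{\sigma_K} H$ together with its tensor product $K$-action $\triangleright$. On the target side, $(H \otimes H, \blacktriangleright)$ is simply the restriction to $K$ of the action $\blacktriangleright$ defined at the start of this section, which is manifestly still a $K$-module algebra structure.

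With these identifications in place, the three verifications carry over verbatim. Invertibility is the same computation as before, with inverse $\tmap_K^{-1}(x \otimes y) = \sum_i x S(b_i) \otimes a_i \triangleright y$, using only $(S \otimes \id)\bigl((\iota \otimes \iota)\calR_K\bigr) = \bigl((\iota \otimes \iota)\calR_K\bigr)^{-1}$ and the counit identities. For equivariance one redoes the computation of $z \blacktriangleright \tmap_K(x \otimes y)$ for $z \in K$; the only non-formal inputs are $\calR_{21} \cdot \Delta^\op(z) = \Delta(z) \cdot \calR_{21}$ and $\Delta^\op(S(z)) = S(z_{(1)}) \otimes S(z_{(2)})$, both valid for $z \in \iota(K)$, together with the fact that $\triangleright$ is a left action, yielding $z \blacktriangleright \tmap_K(x \otimes y) = \tmap_K(z \triangleright (x \otimes y))$. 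For the homomorphism property one reuses $xy = (x_{(1)} \triangleright y)\, x_{(2)}$ in $H$ and the two identities \eqref{eq:identities-R} transported to $H^{\otimes 3}$, after noting that $b_{i(1)}, b_{i(2)}$ still lie in $\iota(K)$ so those identities apply, to conclude that $\tmap_K$ intertwines $\cdot_{\sigma_K}$ with the product of $(H \otimes H, \blacktriangleright)$. The computations are routine once the setup is fixed, so the only real point — which I would flag as the main (minor) obstacle — is the bookkeeping of where the tensor factors live: one must make sure that $H$ being merely a $K$-module algebra, rather than quasitriangular, suffices to form $H \otimes_{\sigma_K} H$, and that $(\iota \otimes \iota)(\calR_K)$ together with all its iterated coproducts stays inside $\iota(K)^{\otimes n} \subseteq H^{\otimes n}$, which is precisely what lets the quasi-cocommutativity relation be applied to arbitrary $z \in K$ while $x, y$ range over all of $H$.
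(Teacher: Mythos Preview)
Your proposal is correct and follows essentially the same approach as the paper: transport $\calR_K$ to $H \otimes H$ via $\iota$, observe that it still satisfies the coproduct identities and the quasi-cocommutativity relation for elements of $K$, and then rerun the three-step proof of \cref{thm:transmutation} with $K$-equivariance in place of $H$-equivariance. The paper's own proof is just a terser version of what you wrote, without spelling out the bookkeeping about $H \otimes_{\sigma_K} H$ and the location of the tensor factors.
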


\begin{proof}
Write $\calR = (\iota \otimes \iota) (\calR_K) \in H \otimes H$. Since $\iota$ is an injective Hopf algebra map we have that $\calR$ is invertible and satisfies the properties
\[
\begin{gathered}
(\Delta \otimes \id) (\calR) = \calR_{1 3} \cdot \calR_{2 3}, \quad
(\id \otimes \Delta) (\calR) = \calR_{1 3} \cdot \calR_{1 2}.
\end{gathered}
\]
On the other hand we have $\calR \cdot \Delta(k) \cdot \calR^{-1} = \Delta^\op(k)$ for any $k \in K$, where here we consider $K \subset H$ via the inclusion $\iota$.
Then it is easy to check that the proof of \cref{thm:transmutation} remains valid in this setting, with the obvious exception of $K$-equivariance replacing $H$-equivariance.
\end{proof}

\subsection{Quantum shuffle product}

We conclude this section by discussing an explicit formula for braided products in $H \otimes_\sigma H$ of the form
\[
\pmap^{(n)} (h_1, \cdots, h_n) = (h_1 \otimes 1 + 1 \otimes h_1) \cdot_\sigma \cdots \cdot_\sigma (h_n \otimes 1 + 1 \otimes h_n).
\]
We should think of $\pmap^{(n)} (h_1, \cdots, h_n)$ as the braided product of "primitive" elements in $H \otimes H$ (more precisely, primitive with respect to the modified coproduct $\underline{\Delta}$).

We refer to the formula obtained below as the \emph{quantum shuffle product}, for reasons soon to be apparent.
The result is known, see for instance \cite[Theorem 6.4.9]{hs-book}, where it appears in the form of the comultiplication for a braided tensor algebra.
It is also essentially dual to the construction of the quantum shuffle Hopf algebra defined by Rosso in \cite{rosso}.
Nevertheless, we find it useful to give a short proof of this result below.

The set of \emph{$(i, j)$-shuffles} with $n = i + j$ consists of permutations $s \in S_n$ such that
\[
s(1) < \cdots < s(i), \quad
s(i + 1) < \cdots < s(n).
\]
We write $S_{i, j}$ for the set of $(i, j)$-shuffles and note that it has cardinality $\binom{n}{i}$.
Below we actually consider the inverse shuffles, which are sometimes called \emph{unshuffles}.
Given any reduced decomposition $s = s_{i_1} \cdots s_{i_m}$ of $s \in S_n$, we define
\[
\sigma_s := \sigma_{i_1} \cdots \sigma_{i_m},
\]
where we use the leg-numbering notation as usual.
This does not depend on the chosen reduced decomposition, since $\sigma$ satisfies the braid equation.
Furthermore, consider the multiplication maps $\mu_{i, j} : H^{\otimes n} \to H^i \otimes H^j$ for $n = i + j$, given by
\[
\mu_{i, j} (h_1 \otimes \cdots \otimes h_n) = h_1 \cdots h_i \otimes h_{i + 1} \cdots h_n.
\]
We note the special cases $\mu_{0, n} (h_1 \otimes \cdots \otimes h_n) = 1 \otimes h_1 \cdots h_n$ and $\mu_{n, 0} (h_1 \otimes \cdots \otimes h_n) = h_1 \cdots h_n \otimes 1$.
Using this notation, we obtain the promised quantum shuffle product as follows.

\begin{proposition}
\label{prop:shuffle-product}
For any $n \geq 1$ we have the formula
\[
\pmap^{(n)} (h_1, \cdots, h_n) = \sum_{i + j = n} \mu_{i, j} \sum_{s^{-1} \in S_{i, j}} \sigma_s (h_1 \otimes \cdots \otimes h_n).
\]
\end{proposition}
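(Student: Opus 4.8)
The plan is to prove the formula by induction on $n$, exploiting the associativity of the braided product $\cdot_\sigma$ together with the defining formula $(a \otimes b) \cdot_\sigma (c \otimes d) = \mu_1 \mu_3 \sigma_2 (a \otimes b \otimes c \otimes d)$ from \cref{prop:braided-product}. For $n = 1$ the right-hand side reads $\mu_{1, 0}(h_1) + \mu_{0, 1}(h_1) = h_1 \otimes 1 + 1 \otimes h_1$, which matches. The inductive step amounts to computing
\[
\pmap^{(n)}(h_1, \cdots, h_n) = \pmap^{(n-1)}(h_1, \cdots, h_{n-1}) \cdot_\sigma (h_n \otimes 1 + 1 \otimes h_n)
\]
and checking that the result of substituting the inductive formula and expanding gives exactly the sum over $(i, j)$-shuffles for $n$.

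First I would record the combinatorial input needed: every $(i, j)$-shuffle $s$ of $S_n$ (with $i + j = n$) restricts, upon forgetting where $n$ is sent, either to an $(i-1, j)$-shuffle of $S_{n-1}$ (if $s^{-1}(n) \le i$, i.e. the last letter $h_n$ goes into the left block) or to an $(i, j-1)$-shuffle of $S_{n-1}$ (if $s^{-1}(n) > i$, i.e. $h_n$ goes into the right block). This is the standard recursion $S_{i,j} = S_{i-1,j} \sqcup S_{i,j-1}$ for shuffles, and on the level of the operators $\sigma_s$ it translates into a factorization: if $h_n$ goes to the right block it simply stays in position $n$ (no braiding needed past it), while if it goes into the left block at position $p \le i$, then $\sigma_s = \sigma_s' \cdot (\sigma_{n-1} \sigma_{n-2} \cdots \sigma_p)$, where $s'$ is the corresponding shuffle of $S_{n-1}$ acting on the first $n-1$ legs and the trailing product of $\sigma_j$'s braids the $n$-th tensor leg leftward past the legs now sitting in positions $p, \cdots, n-1$ of the right block. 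The key point is that this braiding is exactly what the braided product $\cdot_\sigma$ performs when one multiplies $1 \otimes h_1 \cdots h_{n-1-?}$ type terms against $h_n \otimes 1$: the $\sigma_2$ in the product formula, applied iteratively, drags $h_n$ through the right tensor factor.

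The inductive computation then goes as follows. Write the inductive hypothesis as $\pmap^{(n-1)}(h_1, \cdots, h_{n-1}) = \sum_{a + b = n-1} \mu_{a,b} \sum_{t^{-1} \in S_{a,b}} \sigma_t(h_1 \otimes \cdots \otimes h_{n-1})$. Multiplying on the right by $h_n \otimes 1$ via $\cdot_\sigma$: this merges $h_n$ into the left block, and by the formula $(u \otimes v) \cdot_\sigma (h_n \otimes 1) = u \cdot (v_{(\cdot)} \text{ braided past}) \otimes v_{(\cdot)}$ — more precisely one applies $\sigma$ to braid $h_n$ left through all $b$ legs currently forming the right block $v$ — one obtains precisely the terms of $\pmap^{(n)}$ indexed by shuffles $s$ with $h_n$ in the left block, with the operator factorization $\sigma_s = \sigma_t \cdot (\sigma_{n-1}\cdots\sigma_{a+1})$ described above, and with $\mu_{a+1, b}$ as the multiplication. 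Multiplying instead by $1 \otimes h_n$: here $h_n$ lands at the far right, no braiding is triggered (the $\sigma_2$ in the product formula acts trivially because the left factor of $1 \otimes h_n$ is the unit), so one gets $\mu_{a, b+1} \sigma_t(h_1 \otimes \cdots \otimes h_n)$, matching shuffles with $h_n$ in the right block. Summing the two contributions and reindexing $(a+1, b) \to (i, j)$ and $(a, b+1) \to (i, j)$ reassembles $\sum_{i+j=n}\mu_{i,j}\sum_{s^{-1}\in S_{i,j}}\sigma_s$, completing the induction.

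The main obstacle I anticipate is bookkeeping the braiding operators correctly: one must verify that when $\cdot_\sigma$ braids the single element $h_n$ leftward through an already-multiplied block $h_{i_{a+1}} \cdots h_{i_{n-1}}$, the composite braiding equals $\sigma$ applied to the unmultiplied legs (i.e. that $\sigma$ commutes appropriately with the multiplication maps $\mu_{a,b}$), so that the operator really is $\sigma_s$ for the full shuffle $s \in S_n$ rather than some other braiding. This is where the $H$-module algebra structure and the hexagon/braid relations for $\sigma$ enter, via identities of the form $\mu \circ \sigma_{i,i+1} = \sigma \circ \mu$ on the relevant legs (equivalently, naturality of the braiding with respect to the multiplication $H \otimes H \to H$, which holds because $H$ is an $H$-module algebra and $\sigma$ is the braiding of the module category). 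Once this naturality is cleanly stated as a lemma, the rest is the routine shuffle recursion, and one can alternatively cite \cite[Theorem 6.4.9]{hs-book} for the underlying braided-tensor-algebra comultiplication; I would nonetheless carry out the induction explicitly for completeness as indicated in the paper.
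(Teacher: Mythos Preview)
Your proposal is correct and follows essentially the same approach as the paper: induction on $n$ via the recursion $\pmap^{(n+1)} = (\mu_1 \sigma_2 + \mu_2)(\pmap^{(n)} \otimes \id)$, together with the shuffle recursion and the naturality of the braiding with respect to multiplication. The paper packages the last two ingredients as the identities $\sigma_i \mu_i^m = \mu_{i+1}^m \sigma_i \cdots \sigma_{i+m}$ and $B_{i,j} = \sigma_i \cdots \sigma_n (B_{i-1,j} \otimes \id) + (B_{i,j-1} \otimes \id)$; in particular, in the recursion $h_n$ always lands at the \emph{last} position of the left block (position $i$), so your ``position $p \le i$'' should simply read $p = i$.
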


\begin{proof}
First note that we can write
\[
\pmap^{(n + 1)} (h_1, \cdots, h_{n + 1}) = \pmap^{(n)}(h_1, \cdots, h_n) \cdot_\sigma (h_{n + 1} \otimes 1 + 1 \otimes h_{n + 1}).
\]
Recall that $(a \otimes b) \cdot_\sigma (c \otimes d) = \mu_1 \mu_3 \sigma_2 (a \otimes b \otimes c \otimes d)$ by \cref{prop:braided-product}.
Using the compatibility between the multiplication, the braiding and the unit, we obtain
\[
(a \otimes b) \cdot_\sigma (c \otimes 1) = \mu_1 \sigma_2 (a \otimes b \otimes c), \quad
(a \otimes b) \cdot_\sigma (1 \otimes d) = \mu_2 (a \otimes b \otimes d).
\]
This means that $\pmap^{(n + 1)} = (\mu_1 \sigma_2 + \mu_2)(\pmap^{(n)} \otimes \id)$ as maps on $H^{\times (n + 1)}$.

The claimed formula holds for $n = 1$, since $\pmap^{(1)}(h_1) = (\mu_{1, 0} + \mu_{0, 1})(h_1)$, so suppose the it holds for some generic $n$.
Let us note that $\mu_{i, j} = \mu_1^{i - 1} \mu_{i + 1}^{j - 1}$, where this expression must be interpreted appropriately for the boundary cases $i = 0$ and $j = 0$.
Let us also abbreviate $B_{i, j} = \sum_{s^{-1} \in S_{i, j}} \sigma_s$.
Then we are assuming that
\[
\pmap^{(n)} = \sum_{i + j = n} \mu_1^{i - 1} \mu_{i + 1}^{j - 1} B_{i, j}.
\]
Using this assumption we compute
\[
\begin{split}
\pmap^{(n + 1)}
& = (\mu_1 \sigma_2 + \mu_2)(\pmap^{(n)} \otimes \id) \\
& = \sum_{i + j = n} \mu_1^i \sigma_{i + 1} \mu_{i + 1}^{j - 1} (B_{i, j} \otimes \id) + \sum_{i + j = n} \mu_1^{i - 1} \mu_{i + 1}^j (B_{i, j} \otimes \id) \\
& = \sum_{i + j = n + 1} \left( \mu_1^{i - 1} \sigma_i \mu_i^{j - 1} (B_{i - 1, j} \otimes \id) + \mu_1^{i - 1} \mu_{i + 1}^{j - 1} (B_{i, j - 1} \otimes \id) \right).
\end{split}
\]
By induction one shows that $\sigma_i \mu_i^m = \mu_{i + 1}^m \sigma_i \cdots \sigma_{i + m}$ for any $m$.
This lets us rewrite
\[
\pmap^{(n + 1)} = \sum_{i + j = n + 1} \mu_1^{i - 1} \mu_{i + 1}^{j - 1} \left( \sigma_i \cdots \sigma_n (B_{i - 1, j} \otimes \id) + (B_{i, j - 1} \otimes \id) \right).
\]
Then, as in the proof of \cite[Proposition 6]{rosso}, we can use the identity
\[
B_{i, j} = \sigma_i \cdots \sigma_n (B_{i - 1, j} \otimes \id) + (B_{i, j - 1} \otimes \id).
\]
This is obtained by observing that, for any shuffle $w \in S_{i, j}$ with $i + j = n + 1$, we have either $w(i) = n + 1$ or $w(n + 1) = n + 1$.
Using this identity gives the result.
\end{proof}

\section{Coideal property}
\label{sec:coideal}

We now come back to the equivariant quantizations $\qnil = \qnil_1 \oplus \cdots \oplus \qnil_r$ associated to increasing sequences $S_1 \subset \cdots \subset S_r$ of proper subsets of $\simpleroots$.
These were shown to exist in \cref{prop:equivariant-quantizations}, but are not guaranteed to be unique for a given sequence, since the construction depends on decomposing appropriate tensor products, where the simple components can appear with multiplicities larger than one.
Our main goal in this section is to study whether $\bbC \oplus \qnil$ is a left coideal, which as a byproduct allows us to get uniqueness in some cases.

The main tool for this analysis is the map $\tmap$ introduced in the previous section, or rather its counterpart $\tmap_S$ corresponding to a subset $S \subseteq \simpleroots$, as we explain below.
Together with the quantum shuffle product formula, this allows us to verify the coideal condition, with the possible exception of some cases corresponding to exceptional Lie algebras.

\subsection{General considerations}

Let us consider an arbitrary subset $S \subseteq \simpleroots$ of the simple roots and the corresponding quantized Levi factor $\UqlS$.
Then both $\UqlS$ and $\Uqg$ are quasitriangular Hopf algebras and we can apply the results of \cref{sec:quasitriangular}.

To be precise, in this case the universal R-matrix does not belong to the algebraic tensor product, but rather a completion of it. We do not dwell on this point, which only requires minor modifications and is well-understood in the literature.
Moreover, we mostly work in the situation where one of the leg of the tensor product is in the locally-finite part, in which case we have no issues regarding infinite sums for the R-matrix.

\begin{proposition}
Given any $S \subseteq \simpleroots$, denote by $\calR_S = \sum_i a_i \otimes b_i$ the universal R-matrix corresponding to $\UqlS \subseteq \Uqg$ and by $\sigma_S$ the corresponding braiding.
Define the map
\[
\tmapS: \Uqg \otimes \Uqg \to \Uqg \otimes \Uqg, \quad
\tmapS(x \otimes y) = \sum_i x b_i \otimes a_i \triangleright y.
\]
Then we have an isomorphism of $\UqlS$-module algebras
\[
\tmapS: (\Uqg \otimes_{\sigma_S} \Uqg, \triangleright) \to (\Uqg \otimes \Uqg, \blacktriangleright).
\]
\end{proposition}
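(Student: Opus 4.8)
The statement is an immediate consequence of \cref{cor:relative-transmutation}, so the plan is essentially to verify that the hypotheses of that corollary are met in the present situation and then invoke it. First I would take $K = \UqlS$ and $H = \Uqg$, with $\iota: \UqlS \hookrightarrow \Uqg$ the Hopf algebra inclusion described in \cref{sec:quantum-preliminaries}; this is by construction an injective Hopf algebra map, so the structural hypothesis of \cref{cor:relative-transmutation} holds. The quasitriangularity of $\UqlS$ follows from the general fact that quantized enveloping algebras (including those of reductive Lie algebras, here the Levi factor) are quasitriangular, so that $\UqlS$ carries a universal R-matrix $\calR_S$ with associated braiding $\sigma_S$. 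Writing $(\iota \otimes \iota)(\calR_S) = \sum_i a_i \otimes b_i$, the map $\tmapS$ defined in the statement is exactly the map $\tmap_K$ of \cref{cor:relative-transmutation}, and the conclusion of that corollary is precisely the asserted isomorphism of $\UqlS$-module algebras $\tmapS: (\Uqg \otimes_{\sigma_S} \Uqg, \triangleright) \to (\Uqg \otimes \Uqg, \blacktriangleright)$.

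The only genuine point requiring comment is the one already flagged in the paragraph preceding the statement: the universal R-matrix $\calR_S$ does not lie in the algebraic tensor product $\UqlS \otimes \UqlS$ but in a suitable completion, so a priori the infinite sum $\sum_i x b_i \otimes a_i \triangleright y$ defining $\tmapS$ needs justification. I would dispose of this exactly as the paper suggests: one checks that $\tmapS$ is well-defined on the subspaces relevant to the application — namely those where one tensor leg lies in a locally finite part of $\Uqg$ under the adjoint action, so that $a_i \triangleright y$ is supported on finitely many weights and the sum over $i$ truncates to a finite one — and that on such subspaces all the identities used in the proof of \cref{thm:transmutation} (invertibility of $\calR_S$, the coproduct identities $(\Delta \otimes \id)(\calR_S) = (\calR_S)_{13}(\calR_S)_{23}$ and $(\id \otimes \Delta)(\calR_S) = (\calR_S)_{13}(\calR_S)_{12}$, and $\calR_S \Delta(k) \calR_S^{-1} = \Delta^{\mathrm{op}}(k)$ for $k \in \UqlS$) continue to hold after applying $\iota \otimes \iota$. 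These are all standard properties of the R-matrix of a quantized enveloping algebra restricted to a Levi subalgebra, and their transport along the injective Hopf map $\iota$ is exactly what \cref{cor:relative-transmutation} was formulated to handle.

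With those preliminaries in place, the proof reduces to a single sentence: apply \cref{cor:relative-transmutation} with $K = \UqlS$, $H = \Uqg$, $\iota$ the standard inclusion, $\calR_K = \calR_S$ and $\sigma_K = \sigma_S$. I do not anticipate a real obstacle here — the substantive content has already been extracted into \cref{thm:transmutation} and \cref{cor:relative-transmutation}, and this proposition is the concrete specialization to the pair $(\UqlS, \Uqg)$ that will be used in the remainder of \cref{sec:coideal}. If anything, the modest care needed is bookkeeping about the completion, which I would keep brief since, as the paper notes, it only requires minor and well-understood modifications.
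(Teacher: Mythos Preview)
Your proposal is correct and matches the paper's own proof, which consists of the single sentence ``This is simply \cref{cor:relative-transmutation} applied to the case $\UqlS \subseteq \Uqg$.'' Your additional remarks on the completion issue are appropriate and in the spirit of what the paper flags just before the statement.
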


\begin{proof}
This is simply \cref{cor:relative-transmutation} applied to the case $\UqlS \subseteq \Uqg$.
\end{proof}

Observe that in this result we have the braiding $\sigma_S = \tau \circ \calR_S$ corresponding to $\UqlS$.
For the rest of this section we use the shorter notation $\sigma = \sigma_S$, for fixed $S$.

Also note that the previous result is valid for any choice of R-matrix $\calR_S$.
In the following we make a precise choice, which we write schematically as
\[
\calR_S = \kappa \circ \calQ_S.
\]
Here $\kappa(x_\mu \otimes y_\nu) := q^{(\mu, \nu)} x_\mu \otimes y_\nu$ for weight vectors, while $\calQ_S$ is known as the \emph{quasi R-matrix}.
The only properties we require of $\calQ_S$ is that it can be written as
\[
\calQ_S = \sum_{\beta \geq 0} \calQ_{S, \beta}, \quad
\calQ_{S, \beta} \in \UqlS_{- \beta} \otimes \UqlS_\beta,
\]
and moreover $\calQ_{S, 0} = 1 \otimes 1$.
See for instance the discussion in \cite[Section 7]{jantzen}, keeping in mind the general result that if $\calR$ is a universal R-matrix then so is $\calR_{2 1}^{-1}$.
We also observe that for the corresponding braiding $\sigma = \tau \circ \calR_S$ we have
\[
\sigma(x_\mu \otimes y_\nu) = q^{(\mu, \nu)} y_\nu \otimes x_\mu + \sum_i y_i \otimes x_i,
\]
where each $x_i$ has weight lower than $x_\mu$ and each $y_i$ has weight higher than $y_\nu$.

With the R-matrix $\calR_S$ as above, we have the following result.

\begin{proposition}
\label{prop:properties-T}
Let $V \subset \Uqg$ be invariant under the left adjoint action of $\UqlS$.

\begin{enumerate}
\item The map $\tmapS$ preserves the subspace $\Uqb \otimes V$.
\item If $X_\mu \in V$ is a lowest weight vector of weight $\mu$, then $\tmapS(1 \otimes X_\mu) = K_\mu \otimes X_\mu$.
\end{enumerate}
\end{proposition}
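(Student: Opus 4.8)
The plan is to read both statements directly off the explicit factorization $\calR_S = \kappa \circ \calQ_S$, together with the weight decomposition $\calQ_S = \sum_{\beta \geq 0} \calQ_{S, \beta}$ with $\calQ_{S, \beta} \in \UqlS_{- \beta} \otimes \UqlS_\beta$ and $\calQ_{S, 0} = 1 \otimes 1$. Writing $\calR_S = \sum_i a_i \otimes b_i$ and combining the two factors, one sees that in the summand where the first leg has weight $-\beta$ the element $a_i$ is a sum of terms (Cartan element)$\,\cdot\,$(monomial in the $F_j$, $j \in S$, of weight $-\beta$), while $b_i$ is a sum of terms (Cartan element)$\,\cdot\,$(monomial in the $E_j$, $j \in S$, of weight $\beta$). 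In particular $a_i \in \UqlS$, and $b_i$ lies in the positive part of $\UqlS$ — which is contained in $\Uqn$ — times the Cartan part, hence $b_i \in \Uqb$. The only analytic point is that the Cartan factor coming from $\kappa$ a priori lies in a completion; this is harmless because $\kappa$ acts diagonally, so the completion disappears once the operator is evaluated on a weight vector, and I would use this throughout.

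For (1), I would fix $x \in \Uqb$ and, decomposing $V$ into weight spaces, a weight vector $y_\nu \in V$ of weight $\nu$, so that $\tmapS(x \otimes y_\nu) = \sum_i x b_i \otimes (a_i \triangleright y_\nu)$. The first tensor factor lies in $\Uqb$: $b_i \in \Uqb$ by the paragraph above (here the Cartan part of $\kappa$ produces the group-like $K_\nu$ upon pairing with $y_\nu$, so no completion survives), and $x b_i \in \Uqb$ since $\Uqb$ is a subalgebra. The second factor lies in $V$: $a_i \in \UqlS$ and $V$ is invariant under the left adjoint action of $\UqlS$. Since $V$ lies in the locally finite part, $\UqlS \triangleright y_\nu$ is finite-dimensional, so only finitely many $\beta$ contribute and the sum is a genuine element of $\Uqb \otimes V$; summing over the weight components of $y$ then gives (1).

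For (2), I would exploit the fact that the lowest weight condition kills almost all of $\calR_S$. A lowest weight vector satisfies $F_j \triangleright X_\mu = 0$ for every $j \in S$ (this is how the notion is used, cf.\ \cref{lem:action-degree-one}). For $\beta > 0$, each term of $a_i$ is (a Cartan element)$\,\cdot\,$(a monomial of positive length in the $F_j$, $j \in S$); since $\triangleright$ is a left action and the rightmost $F_j$ in such a monomial already annihilates $X_\mu$, this term, and hence $a_i$, acts by zero on $X_\mu$. Thus only the $\beta = 0$ part of $\calR_S$ survives, and there $\calR_{S, 0} = \kappa$ since $\calQ_{S, 0} = 1 \otimes 1$. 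Writing $\kappa = \sum_k c_k \otimes d_k$ with $c_k, d_k$ functions of the $K_j$, each $c_k$ acts on the weight vector $X_\mu$ via $\triangleright$ by a scalar $\chi_k$, so $\tmapS(1 \otimes X_\mu) = \bigl(\sum_k \chi_k d_k\bigr) \otimes X_\mu$. Finally, evaluating $\sum_k \chi_k d_k$ on an arbitrary weight vector $n_\nu$ and comparing with the defining property $\kappa(X_\mu \otimes n_\nu) = q^{(\mu, \nu)} X_\mu \otimes n_\nu$ shows that it acts as the scalar $q^{(\mu, \nu)}$ on weight $\nu$, that is $\sum_k \chi_k d_k = K_\mu$. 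This yields $\tmapS(1 \otimes X_\mu) = K_\mu \otimes X_\mu$.

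I expect no genuine obstacle here: the weight bookkeeping for the two legs of $\calR_S$ and the invariance of $V$ under $\UqlS$ are routine, and the one step that needs a little care is the final identification in (2), where one extracts the weight-zero component of $\calR_S$, applies the $\tmapS$-recipe, and recognizes the outcome as $K_\mu$, with the formal completion in the Cartan directions handled by testing against weight vectors.
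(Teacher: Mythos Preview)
Your proposal is correct and follows essentially the same approach as the paper's own proof: both parts are read off from the factorization $\calR_S = \kappa \circ \calQ_S$ with $\calQ_{S,\beta} \in \UqlS_{-\beta} \otimes \UqlS_\beta$ and $\calQ_{S,0} = 1 \otimes 1$, using that the first leg acts trivially on a lowest weight vector for $\beta > 0$. Your write-up is in fact more detailed than the paper's, which leaves the final identification $\sum_k \chi_k d_k = K_\mu$ implicit; your testing-on-weight-vectors argument makes that step explicit.
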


\begin{proof}
(1) This follows from the formula $\tmapS(x \otimes y) = \sum_i x b_i \otimes a_i \triangleright y$ defining the map $\tmapS$, since we have $\calQ_S = \sum_{\beta \geq 0} \calQ_{S, \beta}$ and $\calQ_{S, \beta} \in \UqlS_{- \beta} \otimes \UqlS_\beta$.

(2) Consider weight vectors $X_\mu$ and $Y_\nu$, where $X_\mu$ is a lowest weight vector with respect to $\triangleright$ as above.
Recalling the given characterization of $\calQ_S$, we have
\[
\calQ_S \triangleright (X_\mu \otimes Y_\nu) = \calQ_{S, 0} \triangleright (X_\mu \otimes Y_\nu) = X_\mu \otimes Y_\nu.
\]
Then for the universal R-matrix we obtain
\[
\calR_S \triangleright (X_\mu \otimes Y_\nu) = \kappa (X_\mu \otimes Y_\nu) = q^{(\mu, \nu)} X_\mu \otimes Y_\nu = X_\mu \otimes K_\mu \triangleright Y_\nu.
\]
From this identity it follows that $\tmapS(1 \otimes X_\mu) = K_\mu \otimes X_\mu$.
\end{proof}

\begin{remark}
We remark that for the universal R-matrix $\calR_S$ we use $\kappa(x_\mu \otimes y_\nu) = q^{(\mu, \nu)} x_\mu \otimes y_\nu$, where $(\cdot, \cdot)$ is the inner product corresponding to the weight lattice of $\lieg$ (instead of the corresponding inner product for the semisimple part of $\liel_S$).
This is possible since the notions of weights for $\lieg$ and $\liel_S$ coincide, as the latter contains the Cartan subalgebra $\lieh$.
This choice guarantees that $\kappa (X_\mu \otimes Y_\nu) = X_\mu \otimes K_\mu \triangleright Y_\nu$, which we used in the proof above.

We also observe that one can realize $\calR_S$ by first considering the universal R-matrix of $\Uqg$ of a similar form and then projecting it down to the subspace $\UqlS \otimes \UqlS$.
\end{remark}

The previous result implies the following properties.

\begin{corollary}
\label{cor:action-T-simple}
Let $V \subset \Uqg$ be a simple $\UqlS$-module under the left adjoint action. Then
\[
\tmapS(V \otimes 1) = V \otimes 1, \quad
\tmapS(1 \otimes V) \subset \UqlS^{\geq 0} \otimes V,
\]
where $\UqlS^{\geq 0} = \UqlS \cap \Uqb$ is the non-negative part of $\UqlS$.
\end{corollary}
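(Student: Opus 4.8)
The plan is to derive both statements from Proposition~\ref{prop:properties-T} together with the equivariance of $\tmapS$, using the fact that a simple $\UqlS$-module $V \subset \Uqg$ under $\triangleright$ is generated from its lowest weight vector by the non-negative part of $\UqlS$.

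For the equality $\tmapS(V \otimes 1) = V \otimes 1$ I would first observe that $\tmapS$ is in fact the identity on all of $\Uqg \otimes \bbC$. Indeed, from the defining formula and $a_i \triangleright 1 = \varepsilon(a_i) 1$ one has $\tmapS(x \otimes 1) = \sum_i \varepsilon(a_i)\, x b_i \otimes 1 = x\,(\varepsilon \otimes \id)(\calR_S) \otimes 1$, and the standard counit identity $(\varepsilon \otimes \id)(\calR_S) = 1$ for a universal R-matrix gives $\tmapS(x \otimes 1) = x \otimes 1$; restricting to $x \in V$ yields the claim.

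For the inclusion $\tmapS(1 \otimes V) \subset \UqlS^{\geq 0} \otimes V$ I would argue as follows. Pick a lowest weight vector $X_\mu \in V$ of weight $\mu$ (with respect to $\triangleright$), which exists by simplicity, and recall that then $V = \UqlS^{\geq 0} \triangleright X_\mu$, since a simple lowest weight module is spanned by the images of its lowest weight vector under the raising operators $E_i$ with $i \in S$. By Proposition~\ref{prop:properties-T}(2) we have $\tmapS(1 \otimes X_\mu) = K_\mu \otimes X_\mu$, which lies in $\UqlS^{\geq 0} \otimes V$. For $z \in \UqlS^{\geq 0}$, using $z \triangleright (1 \otimes X_\mu) = 1 \otimes z \triangleright X_\mu$ and the fact that $\tmapS$ intertwines $\triangleright$ on the source with $\blacktriangleright$ on the target, one computes $\tmapS(1 \otimes z \triangleright X_\mu) = z \blacktriangleright (K_\mu \otimes X_\mu) = z_{(1)} K_\mu S(z_{(3)}) \otimes z_{(2)} \triangleright X_\mu$. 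Now $\UqlS^{\geq 0} = \UqlS \cap \Uqb$ is a Hopf subalgebra of $\Uqb$ — it is generated by the $K_i^{\pm 1}$ together with the $E_i$ for $i \in S$, hence visibly closed under $\Delta$ and $S$ — and it contains $K_\mu$, so the first tensor leg stays in $\UqlS^{\geq 0}$, while the second stays in $V$ by $\UqlS$-invariance. Letting $z$ range over $\UqlS^{\geq 0}$ then gives $\tmapS(1 \otimes V) \subset \UqlS^{\geq 0} \otimes V$.

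I do not anticipate a genuine obstacle: the only points requiring a brief justification are that $V = \UqlS^{\geq 0} \triangleright X_\mu$ (standard representation theory of the quantized Levi factor) and that $\UqlS \cap \Uqb$ is a Hopf subalgebra (a PBW argument identifying it with the subalgebra generated by the Cartan elements and the $E_i$ with $i \in S$). In fact the second inclusion can alternatively be read off directly from $\tmapS(1 \otimes y) = \sum_i b_i \otimes a_i \triangleright y$, since the second leg of $\calR_S$ lies in $\UqlS_\beta$ with $\beta \geq 0$, hence in $\UqlS^{\geq 0}$, and $a_i \triangleright y \in V$ by invariance; I would record this as a remark, as it makes clear that simplicity of $V$ is not strictly needed for this part.
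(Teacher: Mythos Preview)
Your proof is correct and follows essentially the same route as the paper: the first identity is obtained from $\tmapS(x\otimes 1)=x\otimes 1$ via the counit property of $\calR_S$, and the second by applying \cref{prop:properties-T}(2) to a lowest weight vector and then using the $\UqlS$-equivariance of $\tmapS$ together with the fact that $\UqlS^{\geq 0}$ is a Hopf subalgebra generating $V$ from $X_\mu$. Your closing remark that the inclusion also follows directly from $\tmapS(1\otimes y)=\sum_i b_i\otimes a_i\triangleright y$ and $b_i\in\UqlS^{\geq 0}$ is a valid shortcut (this is essentially the argument behind \cref{prop:properties-T}(1), specialized to $x=1$), and indeed shows that simplicity of $V$ is not needed for that inclusion.
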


\begin{proof}
The first property is obvious, since $\tmapS(X \otimes 1) = X \otimes 1$ for any $X \in \Uqg$.
For the second property, let $X_\mu$ be a lowest weight vector of $V$ of weight $\mu$.
Then $\tmapS(1 \otimes X_\mu) = K_\mu \otimes X_\mu$ by the previous result.
Given any $Y \in \UqlS$ we compute
\[
\tmapS(Y \triangleright (1 \otimes X_\mu))
= Y \blacktriangleright \tmapS(1 \otimes X_\mu)
= Y_{(1)} K_\mu S(Y_{(3)}) \otimes Y_{(2)} \triangleright X_\mu.
\]
Here we have used the equivariance of $\tmapS$ with respect to $\UqlS$ and the definition of the left action $\blacktriangleright$.
Since $X_\mu$ generates $V$ as a $\UqlS^{\geq 0}$-module, we obtain
\[
\tmapS(1 \otimes Y \triangleright X_\mu)
= \tmapS(Y \triangleright (1 \otimes X_\mu))
\in \UqlS^{\geq 0} \otimes V. \qedhere
\]
\end{proof}

We are going to use these results to analyze the coproducts of elements in $\qnil$.

\subsection{Degree one}

For the rest of this section we consider a subset of the form $S = \simpleroots \backslash \{\alpha_s\}$.
Then, as in \cref{prop:equivariant-quantizations}, we have a quantization $\qnil_S$ of the positive nilradical $\lien_S$, which is invariant under the left adjoint action of the quantized Levi factor $\UqlS$ and $\bbN$-graded by $\alpha_s$.
Our goal is to show that we can choose $\qnil_S$ in such a way that $\bbC \oplus \qnil_S$ is a left $\Uqb$-coideal.
Recall that the components $\qnil_{S, 1}$ and $\qnil_{S, 2}$ of degrees one and two are uniquely determined by \cref{cor:uniqueness-quantization}, so the only possible choices appear from degree $\geq 3$.

We begin by showing that $\bbC \oplus \qnil_{S, 1}$ is a left coideal.
We note that this result could be proven by more elementary methods, but we state it in such a form that is well-suited to performing a similar analysis for the components of higher degree.

\begin{proposition}
\label{prop:coideal-degree-one}
For any $X \in \lien^q_{S, 1}$ we have
\[
\Delta(X) = \tmapS(X \otimes 1 + 1 \otimes X).
\]
This implies the property $\Delta(\qnil_{S, 1}) \subseteq \Uqb \otimes (\bbC \oplus \qnil_{S, 1})$.
\end{proposition}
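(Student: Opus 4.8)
The plan is to reduce everything to a direct computation with the explicit form of $\Delta(E_{i_s}) = E_{i_s} \otimes 1 + K_{i_s} \otimes E_{i_s}$ and the key facts about $\tmapS$ from \cref{prop:properties-T} and \cref{cor:action-T-simple}. First I would recall that by \cref{lem:action-degree-one} the component $\qnil_{S, 1}$ equals $\UqlS \triangleright E_{i_s}$, and that $E_{i_s}$ is its lowest weight vector (of weight $\alpha_s$). Since $\Delta(E_{i_s}) = E_{i_s} \otimes 1 + K_{\alpha_s} \otimes E_{i_s}$ and, by \cref{prop:properties-T}(2) applied to $V = \qnil_{S,1}$, we have $\tmapS(1 \otimes E_{i_s}) = K_{\alpha_s} \otimes E_{i_s}$, together with $\tmapS(E_{i_s} \otimes 1) = E_{i_s} \otimes 1$, the asserted identity $\Delta(X) = \tmapS(X \otimes 1 + 1 \otimes X)$ holds for the lowest weight vector $X = E_{i_s}$.

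Next I would propagate this identity to all of $\qnil_{S,1}$ by $\UqlS$-equivariance. On the left-hand side, the point is that $\Delta(Y \triangleright X) = Y \blacktriangleright \Delta(X)$ for $Y \in \UqlS$ and $X \in \Uqg$; this is exactly property (3) of the action $\blacktriangleright$ established in the proposition preceding \cref{thm:transmutation}, applied in the relative setting. On the right-hand side, $\tmapS$ intertwines $\triangleright$ on $\Uqg \otimes_{\sigma_S} \Uqg$ with $\blacktriangleright$ on $\Uqg \otimes \Uqg$. So if $\Delta(X) = \tmapS(X \otimes 1 + 1 \otimes X)$ for some $X$, then
\[
\Delta(Y \triangleright X) = Y \blacktriangleright \tmapS(X \otimes 1 + 1 \otimes X) = \tmapS\bigl(Y \triangleright (X \otimes 1 + 1 \otimes X)\bigr) = \tmapS\bigl((Y \triangleright X) \otimes 1 + 1 \otimes (Y \triangleright X)\bigr),
\]
where in the last step I used that $X \otimes 1 + 1 \otimes X = \pmap^{(1)}(X)$ is "primitive" for the relevant action, i.e. that $Y \triangleright (X \otimes 1 + 1 \otimes X) = (Y \triangleright X) \otimes 1 + 1 \otimes (Y \triangleright X)$ — this needs a small check: it amounts to $Y \triangleright 1 = \varepsilon(Y) 1$ together with the tensor product action, which is straightforward. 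Since $E_{i_s}$ generates $\qnil_{S,1}$ under $\triangleright$, the identity extends to all $X \in \qnil_{S,1}$.

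Finally I would deduce the coideal statement. Applying $\tmapS$ to $X \otimes 1 + 1 \otimes X$ with $X \in \qnil_{S,1}$: the term $\tmapS(X \otimes 1) = X \otimes 1 \in \Uqn \otimes \bbC \subset \Uqb \otimes (\bbC \oplus \qnil_{S,1})$, and the term $\tmapS(1 \otimes X)$ lies in $\UqlS^{\geq 0} \otimes \qnil_{S,1} \subseteq \Uqb \otimes \qnil_{S,1}$ by \cref{cor:action-T-simple} (using that $\qnil_{S,1}$ is a simple $\UqlS$-module). Adding these gives $\Delta(X) \in \Uqb \otimes (\bbC \oplus \qnil_{S,1})$, as claimed. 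I do not expect a serious obstacle here: the only mild care is in the bookkeeping of the completed R-matrix (harmless since one leg is locally finite, as noted in the text) and in the elementary verification that $\pmap^{(1)}(X)$ transforms correctly under $\triangleright$; the real content is already packaged in \cref{prop:properties-T} and \cref{cor:action-T-simple}.
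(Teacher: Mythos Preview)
Your proposal is correct and follows essentially the same approach as the paper: verify the identity on the lowest weight vector $E_s$ using \cref{prop:properties-T}, then propagate to all of $\qnil_{S,1}$ via the equivariance $\Delta(Y \triangleright X) = Y \blacktriangleright \Delta(X)$ and the intertwining property of $\tmapS$, and finally deduce the coideal inclusion from \cref{cor:action-T-simple}. The only cosmetic point is that your notation $E_{i_s}$ should just be $E_s$ in this section's conventions (where $S = \simpleroots \setminus \{\alpha_s\}$).
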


\begin{proof}
Since $S = \simpleroots \backslash \{\alpha_s\}$, we have that $E_s$ is a lowest weight vector for the left adjoint action of $\UqlS$, as in \cref{lem:action-degree-one}.
Then using \cref{prop:properties-T} we get
\[
\Delta(E_s) = E_s \otimes 1 + K_s \otimes E_s = \tmapS (E_s \otimes 1 + 1 \otimes E_s).
\]
This proves the first claim for the lowest weight vector $E_s$.

Next, recall that the coproduct satisfies the property $\Delta(x \triangleright y) = x \blacktriangleright \Delta(y)$ for any $x, y \in \Uqg$.
Then for any $Y \in \UqlS$ we obtain the identity
\[
\Delta(Y \triangleright E_s) = Y \blacktriangleright \Delta(E_s) = Y \blacktriangleright \tmapS (E_s \otimes 1 + 1 \otimes E_s).
\]
Similarly, the map $\tmapS$ satisfies the equivariance property $x \blacktriangleright \tmapS(y \otimes z) = \tmapS(x \triangleright (y \otimes z))$ for any $x \in \UqlS$ and $y, z \in \Uqg$.
Using this in the identity above, we get
\[
\Delta(Y \triangleright E_s) = \tmapS (Y \triangleright (E_s \otimes 1 + 1 \otimes E_s)) = \tmapS (Y \triangleright E_s \otimes 1 + 1 \otimes Y \triangleright E_s).
\]
Since $E_s$ generates $\qnil_{S, 1}$ as an $\UqlS$-module, this proves the first claim.
The second claim immediately follows from \cref{cor:action-T-simple}.
\end{proof}

This result shows that $\bbC \oplus \qnil_{S, 1}$ is a left $\Uqb$-coideal.
We note that here we could replace $\Uqb$ with $\UqlS^{\geq 0}$, but this is not going to be the case for the higher degrees.

Using the previous result we can also prove the following identity, which is going to be useful for studying the coideal property in general.

\begin{lemma}
\label{lem:delta-shuffle}
Let $X_1, \cdots, X_n \in \qnil_{S, 1}$. Then we have
\[
\Delta(X_1 \cdots X_n) = \tmapS \circ \pmap^{(n)} (X_1, \cdots, X_n),
\]
where the map $\pmap^{(n)}$ is defined in \cref{prop:shuffle-product}.
\end{lemma}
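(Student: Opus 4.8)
The plan is to prove the identity by induction on $n$, assembling three ingredients that are all already available: the multiplicativity of $\Delta$, the multiplicativity of $\tmapS$, and the recursive description of $\pmap^{(n)}$ obtained inside the proof of \cref{prop:shuffle-product}.

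For the base case $n = 1$ there is essentially nothing to do: by definition $\pmap^{(1)}(X_1) = X_1 \otimes 1 + 1 \otimes X_1$, so the asserted identity $\Delta(X_1) = \tmapS(X_1 \otimes 1 + 1 \otimes X_1)$ is precisely the content of \cref{prop:coideal-degree-one}, valid for every $X_1 \in \qnil_{S, 1}$.

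For the inductive step I would start from the recursion
\[
\pmap^{(n + 1)}(X_1, \cdots, X_{n + 1}) = \pmap^{(n)}(X_1, \cdots, X_n) \cdot_\sigma (X_{n + 1} \otimes 1 + 1 \otimes X_{n + 1})
\]
established inside the proof of \cref{prop:shuffle-product}, where $\cdot_\sigma$ denotes the product of the braided tensor product algebra $\Uqg \otimes_{\sigma_S} \Uqg$. Since $\tmapS$ is an isomorphism of $\UqlS$-module algebras from $(\Uqg \otimes_{\sigma_S} \Uqg, \triangleright)$ onto $(\Uqg \otimes \Uqg, \blacktriangleright)$, and the target carries the ordinary tensor product multiplication, applying $\tmapS$ to the recursion gives
\[
\tmapS \circ \pmap^{(n + 1)}(X_1, \cdots, X_{n + 1}) = \bigl( \tmapS \circ \pmap^{(n)}(X_1, \cdots, X_n) \bigr) \cdot \tmapS(X_{n + 1} \otimes 1 + 1 \otimes X_{n + 1}),
\]
the product on the right now being the ordinary one. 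The inductive hypothesis identifies the first factor with $\Delta(X_1 \cdots X_n)$ and \cref{prop:coideal-degree-one} identifies the second with $\Delta(X_{n + 1})$; since $\Delta$ is an algebra homomorphism, the right-hand side equals $\Delta(X_1 \cdots X_n)\Delta(X_{n + 1}) = \Delta(X_1 \cdots X_{n + 1})$, which closes the induction.

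I do not expect any genuine obstacle in this argument; the two things requiring a word of care are purely bookkeeping. First, $\tmapS$ is only ever applied to elements whose second tensor leg lies in the finite-dimensional, and hence locally finite, subspace spanned by products of elements of $\qnil_{S, 1}$, so the defining expression $\sum_i x b_i \otimes a_i \triangleright y$ is a finite sum and the completion subtleties surrounding $\calR_S$ play no role. Second, one must keep track of the fact that the multiplicativity of $\tmapS$ being invoked is with respect to the braided product $\cdot_\sigma$ on the source, which is exactly the product occurring in the recursion for $\pmap^{(n)}$, rather than the componentwise multiplication of $\Uqg \otimes \Uqg$.
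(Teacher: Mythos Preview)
Your argument is correct and is essentially the same as the paper's, only packaged as an induction. The paper observes directly that $\Delta(X_1\cdots X_n)=\Delta(X_1)\cdots\Delta(X_n)=\tmapS(X_1\otimes 1+1\otimes X_1)\cdots\tmapS(X_n\otimes 1+1\otimes X_n)$, then uses the algebra homomorphism property of $\tmapS$ to pull it inside as a single $\cdot_\sigma$-product, which is by definition $\pmap^{(n)}(X_1,\ldots,X_n)$; your induction unrolls this same computation one factor at a time, so no recursion from the proof of \cref{prop:shuffle-product} is actually needed.
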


\begin{proof}
Clearly $\Delta(X_1 \cdots X_n) = \Delta(X_1) \cdots \Delta(X_n)$.
We have $\Delta(X) = \tmapS(X \otimes 1 + 1 \otimes X)$ for any $X \in \qnil_{S, 1}$ by \cref{prop:coideal-degree-one}.
From this we get
\[
\Delta(X_1 \cdots X_n) = \tmapS(X_1 \otimes 1 + 1 \otimes X_1) \cdots \tmapS(X_n \otimes 1 + 1 \otimes X_n).
\]
Now recall that $\tmapS$ is an algebra isomorphism from $\Uqg \otimes_\sigma \Uqg$ to $\Uqg \otimes \Uqg$, where for the first algebra we have the product $\cdot_\sigma$ which involves the braiding $\sigma = \sigma_S$ corresponding to $S$.
This means that we can rewrite
\[
\Delta(X_1 \cdots X_n) = \tmapS \left( (X_1 \otimes 1 + 1 \otimes X_1) \cdot_\sigma \cdots \cdot_\sigma (X_n \otimes 1 + 1 \otimes X_n) \right).
\]
Then the result follows from the definition of the map $\pmap^{(n)}$.
\end{proof}

\subsection{Degree two}

Next we consider the degree-two component $\lien^q_{S, 2} \subset \lien^q_{S, 1} \cdot \lien^q_{S, 1}$, which is uniquely determined as observed in \cref{cor:uniqueness-quantization}.
In the following result we make crucial use of the previously introduced map $\tmapS$, making the proof quite simple.

\begin{proposition}
\label{prop:coideal-degree-two}
We have
\[
\Delta(\lien^q_{S, 2}) \subseteq \Uqb \otimes (\bbC \oplus \qnil_{S, 1} \oplus \qnil_{S, 2}).
\]
\end{proposition}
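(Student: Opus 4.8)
The strategy is to run the same machinery as in the degree-one case, now applied to products of two degree-one elements, using \cref{lem:delta-shuffle} as the starting point. Concretely, write a general element of $\qnil_{S, 2}$ as a linear combination of products $X_1 X_2$ with $X_1, X_2 \in \qnil_{S, 1}$ (this is possible since $\qnil_{S, 2} \subset \qnil_{S, 1} \cdot \qnil_{S, 1}$). By \cref{lem:delta-shuffle} we have $\Delta(X_1 X_2) = \tmapS \circ \pmap^{(2)}(X_1, X_2)$, and by \cref{prop:shuffle-product}
\[
\pmap^{(2)}(X_1, X_2) = X_1 X_2 \otimes 1 + \mu_2 \sigma_1 (X_1 \otimes X_2 \otimes 1) \text{-type terms} + 1 \otimes X_1 X_2,
\]
more precisely $\pmap^{(2)}(X_1, X_2) = X_1 X_2 \otimes 1 + 1 \otimes X_1 X_2 + (X_1 \otimes X_2) + \sigma(X_1 \otimes X_2)$, where the middle two terms land in $\qnil_{S,1} \otimes \qnil_{S,1}$ together with the lower/higher weight corrections coming from the braiding. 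So the task reduces to understanding where $\tmapS$ sends each piece.

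**Key steps, in order.** First, the term $X_1 X_2 \otimes 1$: since $\tmapS(Z \otimes 1) = Z \otimes 1$ for all $Z$, this contributes $X_1 X_2 \otimes 1 \in (\bbC \oplus \qnil_{S,2}) $ tensored into the right leg only after we note it actually belongs in the left leg — wait, more carefully, we want $\Delta(\qnil_{S,2}) \subset \Uqb \otimes (\bbC \oplus \qnil_{S,1} \oplus \qnil_{S,2})$, so I need every term to have its \emph{right} leg in $\bbC \oplus \qnil_{S,1} \oplus \qnil_{S,2}$. The term $X_1 X_2 \otimes 1$ has right leg $1 \in \bbC$, fine. Second, the term $1 \otimes X_1 X_2$: by \cref{cor:action-T-simple} (second property) applied to the simple module $V = \qnil_{S,2}$, we get $\tmapS(1 \otimes \qnil_{S,2}) \subset \UqlS^{\geq 0} \otimes \qnil_{S,2} \subset \Uqb \otimes \qnil_{S,2}$. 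But the projection of $1 \otimes X_1 X_2$ onto $1 \otimes \qnil_{S,2}$ is only part of it; one must handle the full $1 \otimes X_1 X_2$, which decomposes under $\UqlS$ into $\qnil_{S,2}$ plus possibly other components of $\qnil_{S,1}^{\otimes 2}$ mapped into $\Uqn$ — here I would invoke that $\qnil_{S,1} \cdot \qnil_{S,1}$, as a $\UqlS$-module, decomposes multiplicity-freely (\cref{prop:classical-module}(3), or rather \cref{cor:uniqueness-quantization}) with $\qnil_{S,2}$ the unique degree-two piece, and the degree-zero piece (if any) lies in $\bbC$ while there is no degree-one piece in a product of two degree-one elements. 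Actually the product $X_1X_2$ has $\alpha_s$-degree exactly $2$, so $1 \otimes X_1 X_2$ already lies in $1 \otimes (\text{degree-}2\text{ part of } \Uqn)$, and I need that this degree-$2$ part, intersected with $\qnil_{S,1}\cdot\qnil_{S,1}$, equals $\qnil_{S,2}$ — which is essentially the definition. So step two gives $\tmapS(1 \otimes X_1 X_2) \in \UqlS^{\geq 0} \otimes \qnil_{S,2}$.

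**The main term.** The remaining contribution is $\tmapS$ applied to $(X_1 \otimes X_2) + \sigma(X_1 \otimes X_2)$, i.e. the "cross terms" living in the degree-$1 \otimes$ degree-$1$ bgraded piece (plus lower$\otimes$higher corrections from $\sigma$, which by weight reasons stay in degree $\leq 1 \otimes$ degree $\geq 1$, hence are degree-$0 \otimes$ degree-$2$ or degree-$1 \otimes$ degree-$1$). By \cref{prop:properties-T}(1), $\tmapS$ preserves $\Uqb \otimes V$ for $V$ a $\UqlS$-submodule; applying this with $V = \qnil_{S,1}$ handles the degree-$1 \otimes$ degree-$1$ part — its right leg stays in $\qnil_{S,1}$, good — and with $V = \qnil_{S,2}$ (or $\bbC$) handles the degree-$0 \otimes$ degree-$2$ part. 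So in all cases the right leg lands in $\bbC \oplus \qnil_{S,1} \oplus \qnil_{S,2}$, and the left leg lands in $\Uqb$ because $\tmapS$ multiplies the left leg by elements of $\UqlS_{-\beta} \subset \Uqb$ and the original left legs $X_1 X_2$, $X_1$, $1$ are all in $\Uqb$ (being in $\Uqn$ or $\bbC$). Assembling, $\Delta(X_1 X_2) \in \Uqb \otimes (\bbC \oplus \qnil_{S,1} \oplus \qnil_{S,2})$, and by linearity the same holds for all of $\qnil_{S,2}$.

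**Expected main obstacle.** The delicate point is controlling the cross terms: one must be sure that after braiding, $\sigma(X_1 \otimes X_2)$ and the lower-weight corrections do not produce a right-leg component outside $\bbC \oplus \qnil_{S,1} \oplus \qnil_{S,2}$ — in particular no spurious degree-$2$ elements of $\Uqn$ lying outside $\qnil_{S,2}$. The resolution is the weight/grading bookkeeping together with \cref{prop:properties-T}(1): every term has total $\alpha_s$-degree $2$ split as $(0,2)$ or $(1,1)$ across the tensor legs, the $(1,1)$ right legs are forced into $\qnil_{S,1}$ since $\tmapS$ preserves $\Uqb \otimes \qnil_{S,1}$ and the braiding $\sigma$ maps $\qnil_{S,1} \otimes \qnil_{S,1}$ into itself (both legs remaining degree-$1$ weight vectors inside the $\UqlS$-module $\qnil_{S,1}$, up to the correction terms which only raise the right leg's weight \emph{within} $\qnil_{S,1}$), and the $(0,2)$ right legs are forced into $\qnil_{S,2}$ by the multiplicity-free decomposition of $\qnil_{S,1}\cdot\qnil_{S,1}$. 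I would present this as: decompose $\pmap^{(2)}(X_1,X_2)$ according to the $\alpha_s \times \alpha_s$-bigrading, apply \cref{prop:properties-T}(1) and \cref{cor:action-T-simple} termwise, and collect.
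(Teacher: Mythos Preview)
Your approach is essentially the paper's, but there is a genuine gap in how you treat the term $1 \otimes X_1 X_2$. You assert that the degree-$2$ part of $\Uqn$ intersected with $\qnil_{S,1} \cdot \qnil_{S,1}$ equals $\qnil_{S,2}$, ``which is essentially the definition''. It is not: every element of $\qnil_{S,1} \cdot \qnil_{S,1}$ already has $\alpha_s$-degree exactly $2$, so that intersection is the whole product $\qnil_{S,1} \cdot \qnil_{S,1}$, which is in general strictly larger than the simple $\UqlS$-submodule $\qnil_{S,2}$. Consequently $\tmapS(1 \otimes X_1 X_2)$ need not have its right leg in $\qnil_{S,2}$, and your conclusion $\Delta(X_1 X_2) \in \Uqb \otimes (\bbC \oplus \qnil_{S,1} \oplus \qnil_{S,2})$ for \emph{arbitrary} $X_1, X_2 \in \qnil_{S,1}$ is false in general. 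The final ``by linearity'' step then does not repair anything, since you are trying to deduce the special from the (false) general.

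The fix is exactly what the paper does: do not analyze individual products. First take the linear combination $X = \sum_{i,j} c_{ij} X_i X_j \in \qnil_{S,2}$; then $\sum_{i,j} c_{ij}\,(1 \otimes X_i X_j) = 1 \otimes X$ already lies in $1 \otimes \qnil_{S,2}$, and now \cref{cor:action-T-simple} applies directly to give $\tmapS(1 \otimes X) \in \UqlS^{\geq 0} \otimes \qnil_{S,2}$. The other two pieces may indeed be handled termwise: $\tmapS(X_i X_j \otimes 1) = X_i X_j \otimes 1$, and $(\id + \sigma)(X_i \otimes X_j) \in \qnil_{S,1} \otimes \qnil_{S,1} \subset \Uqb \otimes \qnil_{S,1}$, which $\tmapS$ preserves by \cref{prop:properties-T}(1). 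Incidentally, your worry about $(0,2)$-bigraded corrections coming out of $\sigma$ is unnecessary: the braiding is a $\UqlS$-module endomorphism of $\qnil_{S,1} \otimes \qnil_{S,1}$, so both legs remain in $\qnil_{S,1}$.
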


\begin{proof}
Let $\{X_i\}_i$ be a basis of $\qnil_{S, 1}$.
Then we have $\Delta(X_i X_j) = \tmapS \circ \pmap^{(2)} (X_i, X_j)$ by \cref{lem:delta-shuffle}.
It easily follows from \cref{prop:shuffle-product} that
\[
\pmap^{(2)} (X_i, X_j) = X_i X_j \otimes 1 + (\id + \sigma) (X_i \otimes X_j) + 1 \otimes X_i X_j.
\]
Here we note that $(\id + \sigma) (X_i \otimes X_j) \in \qnil_{S, 1} \otimes \qnil_{S, 1} \subset \Uqb \otimes \qnil_{S, 1}$. By \cref{prop:properties-T} we know that $\tmapS$ maps $\Uqb \otimes \qnil_{S, 1}$ into itself, which gives
\[
\tmapS \circ (\id + \sigma) (X_i \otimes X_j) \in \Uqb \otimes \qnil_{S, 1}.
\]
Now consider any $X \in \qnil_{S, 2}$, which we can write in the form $X = \sum_{i, j} c_{i j} X_i X_j$. Then
\[
\tmapS(X \otimes 1) \in \qnil_{S, 2} \otimes \bbC, \quad
\tmapS(1 \otimes X) \in \Uqb \otimes \qnil_{S, 2},
\]
by \cref{cor:action-T-simple}. This gives the result.
\end{proof}

In other words, the subspace $\bbC \oplus \qnil_{S, 1} \oplus \qnil_{S, 2}$ is a left $\Uqb$-coideal.

\subsection{Degree three}

The case of degree three is significantly more complicated.
One issue is that $\qnil_{S, 3}$ is not uniquely determined in this case.
Indeed, we get at least two simple components of this type in $(\qnil_{S, 1})^{\otimes 3}$, arising from the decompositions of
\[
\qnil_{S, 2} \otimes \qnil_{S, 1}, \quad
\qnil_{S, 1} \otimes \qnil_{S, 2}.
\]
We begin by spelling out the product formula from \cref{prop:shuffle-product} in this case.

\begin{lemma}
\label{lem:shuffle-three}
For any $x_1, x_2, x_3 \in \Uqg$ we have
\[
\begin{split}
\pmap^{(3)} (x_1, x_2, x_3)
& = x_1 x_2 x_3 \otimes 1 + 1 \otimes x_1 x_2 x_3 \\
& + \mu_1 (1 + \sigma_2 + \sigma_2 \sigma_1) (x_1 \otimes x_2 \otimes x_3) \\
& + \mu_2 (1 + \sigma_1 + \sigma_1 \sigma_2) (x_1 \otimes x_2 \otimes x_3).
\end{split}
\]
\end{lemma}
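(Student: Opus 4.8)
The plan is to read off the $n=3$ instance of the quantum shuffle product formula in \cref{prop:shuffle-product}. Specialising that formula, the outer sum ranges over the pairs $(i,j)$ with $i+j=3$, namely $(3,0)$, $(2,1)$, $(1,2)$ and $(0,3)$. The two extreme pairs are immediate: $S_{3,0}$ and $S_{0,3}$ each consist of the identity permutation only, so $\sigma_s = \id$, while $\mu_{3,0}(x_1 \otimes x_2 \otimes x_3) = x_1 x_2 x_3 \otimes 1$ and $\mu_{0,3}(x_1 \otimes x_2 \otimes x_3) = 1 \otimes x_1 x_2 x_3$. These produce the first line of the asserted identity.

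For the pair $(2,1)$ one lists the permutations $s \in S_3$ with $s^{-1} \in S_{2,1}$, that is, those with $s^{-1}(1) < s^{-1}(2)$; these are $e$, the simple transposition $s_2$, and the length-two element $s_2 s_1$ (all three of length at most two, so the given expressions are reduced). Reading off $\sigma_s$ for each and recalling $\mu_{2,1} = \mu_1$, this pair contributes $\mu_1 (\id + \sigma_2 + \sigma_2 \sigma_1)(x_1 \otimes x_2 \otimes x_3)$. Symmetrically, for $(1,2)$ the relevant permutations are $e$, $s_1$ and $s_1 s_2$, and since $\mu_{1,2} = \mu_2$ this pair contributes $\mu_2(\id + \sigma_1 + \sigma_1 \sigma_2)(x_1 \otimes x_2 \otimes x_3)$. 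Summing the four contributions yields the claim.

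An alternative is to run a single step of the recursion established inside the proof of \cref{prop:shuffle-product}, namely $\pmap^{(3)} = (\mu_1 \sigma_2 + \mu_2)(\pmap^{(2)} \otimes \id)$, starting from $\pmap^{(2)}(x_1, x_2) = x_1 x_2 \otimes 1 + (\id + \sigma)(x_1 \otimes x_2) + 1 \otimes x_1 x_2$. Applying $\mu_1 \sigma_2$ and $\mu_2$ to the four summands of $\pmap^{(2)}(x_1, x_2) \otimes x_3$, using $\sigma(1 \otimes x) = x \otimes 1$ for the summands involving the unit and expanding $\sigma(x_1 x_2 \otimes x_3) = \mu_2 \sigma_1 \sigma_2 (x_1 \otimes x_2 \otimes x_3)$ via naturality of the braiding with respect to the multiplication, one again collects exactly the four terms above.

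I do not expect a genuine obstacle here: the statement is a purely combinatorial unpacking of \cref{prop:shuffle-product}, and the only point requiring care is bookkeeping — keeping the leg-numbering of $\mu_1, \mu_2, \sigma_1, \sigma_2$ consistent with the definitions of $\mu_{i,j}$ and $\sigma_s$, and handling the boundary pairs $(3,0)$, $(0,3)$ where $\mu_{i,j}$ inserts a unit. The explicit formula is recorded because it is precisely what enters the analysis of $\Delta$ on the degree-three component $\qnil_{S,3}$ via \cref{lem:delta-shuffle}.
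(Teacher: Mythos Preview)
Your proof is correct and follows essentially the same approach as the paper: list the shuffles $S_{i,j}$ for $i+j=3$, take inverses, and plug into the formula from \cref{prop:shuffle-product}. The paper's proof is even terser than your first paragraph, and it does not include your alternative recursive derivation, but the substance is identical.
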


\begin{proof}
The relevant shuffles $S_{i, j}$ with $i + j = 3$ are
\[
S_{0, 3} = \{ 1 \}, \quad
S_{1, 2} = \{ 1, s_1, s_2 s_1 \}, \quad
S_{2, 1} = \{ 1, s_2, s_1 s_2 \}, \quad
S_{3, 0} = \{ 1 \}.
\]
Taking their inverses and inserting them into $\pmap^{(3)}$ gives the result.
\end{proof}

We begin by giving an equivalent condition for $\qnil_{S, 3}$ to give rise to a left coideal.

\begin{lemma}
\label{lem:degree-three-property}
Let $\tilde{\lien}^q_{S, 3} \subset (\qnil_{S, 1})^{\otimes 3}$ be an $\UqlS$-module corresponding to the $\liel_S$-module $\lien_{S, 3}$.
Denote by $\qnil_{S, 3}$ its image in $\Uqn$ by the multiplication map.
Then we have
\[
\Delta(\lien^q_{S, 3}) \subseteq \Uqb \otimes (\bbC \oplus \qnil_{S, 1} \oplus \qnil_{S, 2} \oplus \qnil_{S, 3}).
\]
if and only if we have the property
\[
\mu_2 (1 + \sigma_1 + \sigma_1 \sigma_2) (\tilde{\lien}^q_{S, 3}) \subseteq \qnil_{S, 1} \otimes \qnil_{S, 2}.
\]
\end{lemma}

\begin{proof}
We denote by $\{X_i\}_i$ a basis of $\qnil_{S, 1}$, as before.
Then by \cref{lem:delta-shuffle} we have
\[
\Delta(X_i X_j X_k) = \tmapS \circ \pmap^{(3)} (X_i, X_j, X_k),
\]
where $\pmap^{(3)}$ is given explicitly in \cref{lem:shuffle-three}.
We note that
\[
\mu_1 (1 + \sigma_2 + \sigma_2 \sigma_1) (X_i \otimes X_j \otimes X_k) \in (\qnil_{S, 1})^2 \otimes \qnil_{S, 1} \subset \Uqb \otimes \qnil_{S, 1}.
\]
The same is true after applying $\tmapS$, by \cref{prop:properties-T}.
Now consider any element $X = \sum_{i, j, k} c_{i j k} X_i X_j X_k \in \qnil_{S, 3}$.
Then by \cref{cor:action-T-simple} we get
\[
\tmapS(X \otimes 1) \in \Uqb \otimes \bbC, \quad
\tmapS(1 \otimes X) \in \Uqb \otimes \qnil_{S, 3}.
\]
This accounts for three out of the four terms appearing in \cref{lem:shuffle-three}.

The only term left to consider is the one with the map $\mu_2 (1 + \sigma_1 + \sigma_1 \sigma_2)$. We have
\[
\mu_2 (1 + \sigma_1 + \sigma_1 \sigma_2) (\tilde{\lien}^q_{S, 3}) \subseteq \qnil_{S, 1} \otimes (\qnil_{S, 1})^2 \subset \Uqb \otimes (\qnil_{S, 1})^2.
\]
Decompose $(\qnil_{S, 1})^2$ into simple components, among which we have $\qnil_{S, 2}$ with multiplicity one.
Let $V$ be any such component and note that it must have degree two.
By \cref{prop:properties-T} we get $\tmapS(\Uqb \otimes V) \subseteq \Uqb \otimes V$.
Since $\tmapS$ is invertible, we have
\[
\tmapS(\Uqb \otimes V) \subset \Uqb \otimes (\bbC \oplus \qnil_{S, 1} \oplus \qnil_{S, 2} \oplus \qnil_{S, 3})
\]
if and only if $V = \qnil_{S, 2}$.
This gives the condition $\mu_2 (1 + \sigma_1 + \sigma_1 \sigma_2) (\tilde{\lien}^q_{S, 3}) \subseteq \qnil_{S, 1} \otimes \qnil_{S, 2}.$
\end{proof}

Hence the problem boils down to finding a simple component $\tilde{\lien}^q_{S, 3} \subset (\qnil_{S, 1})^{\otimes 3}$, corresponding to the $\liel_S$-module $\lien_{S, 3}$, such that the property from \cref{lem:degree-three-property} is satisfied.

As we discuss below, this is easy to achieve if $(\qnil_{S, 1})^{\otimes 3}$ has exactly two components of this type. However, we do not know of any a priori reason why this should be the case.
Hence we have to resort to some tedious case-by-case checking in \cref{lem:degree-three-multiplicity} (which is perhaps to be expected, since this situation only arises for exceptional Lie algebras).

\begin{proposition}
\label{prop:coideal-degree-three}
There exists $\lien^q_{S, 3} \subset \Uqn$ such that we have
\[
\Delta(\lien^q_{S, 3}) \subseteq \Uqb \otimes (\bbC \oplus \qnil_{S, 1} \oplus \qnil_{S, 2} \oplus \qnil_{S, 3}).
\]
Moreover this choice is uniquely determined.
\end{proposition}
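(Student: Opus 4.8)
The plan is to derive everything from \cref{lem:degree-three-property}. That lemma reduces existence to finding a copy $\tilde{\lien}^q_{S, 3}$ of the $\liel_S$-module $\lien_{S, 3}$ inside $(\qnil_{S, 1})^{\otimes 3}$ lying in the kernel of the $\UqlS$-equivariant map
\[
\phi := (\id \otimes \pi) \circ \mu_2(1 + \sigma_1 + \sigma_1 \sigma_2) : (\qnil_{S, 1})^{\otimes 3} \longrightarrow \qnil_{S, 1} \otimes \big( (\qnil_{S, 1} \cdot \qnil_{S, 1}) / \qnil_{S, 2} \big),
\]
where $\pi$ is the projection modulo the degree-two component (unique by \cref{cor:uniqueness-quantization}); one then sets $\lien^q_{S, 3} = \mu_1^2(\tilde{\lien}^q_{S, 3})$. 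For uniqueness I would also record the converse: if a degree-three component $\lien^q_{S, 3}$ (hence a copy of $\lien_{S, 3}$ contained in the triple product of $\qnil_{S, 1}$) has the coideal property, then any lift $\tilde{\lien}^q_{S, 3} \subset (\qnil_{S, 1})^{\otimes 3}$ of it satisfies $\phi(\tilde{\lien}^q_{S, 3}) = 0$. Indeed, by \cref{lem:shuffle-three} the four summands of $\pmap^{(3)}$ occupy the distinct $\alpha_s$-bidegrees $(3, 0)$, $(0, 3)$, $(2, 1)$, $(1, 2)$; the map $\tmapS$ preserves this bidegree, since $\UqlS$ has $\alpha_s$-degree zero; the $(3, 0)$-, $(0, 3)$- and $(2, 1)$-pieces of $\Delta(\lien^q_{S, 3})$ land automatically in $\Uqb \otimes (\bbC \oplus \qnil_{S, 1} \oplus \qnil_{S, 3})$, exactly as in the proof of \cref{lem:degree-three-property}; and since the $(*, 2)$-part of $\Uqb \otimes (\bbC \oplus \qnil_{S, 1} \oplus \qnil_{S, 2} \oplus \qnil_{S, 3})$ is simply $\Uqb \otimes \qnil_{S, 2}$, the coideal property forces $\tmapS\big( \mu_2(1 + \sigma_1 + \sigma_1 \sigma_2)(\tilde{\lien}^q_{S, 3}) \big) \subseteq \Uqb \otimes \qnil_{S, 2}$, which is equivalent to $\phi(\tilde{\lien}^q_{S, 3}) = 0$ because $\tmapS$ and $\tmapS^{-1}$ both preserve $\Uqb \otimes \qnil_{S, 2}$ (\cref{prop:properties-T} and its proof). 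So the proposition reduces to showing that the kernel of $\phi$, restricted to the $\lien_{S, 3}$-isotypic component $M$ of $(\qnil_{S, 1})^{\otimes 3}$, is a single copy of $\lien_{S, 3}$, and that its image under $\mu_1^2$ is nonzero.

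Existence is then a multiplicity count. By \cref{lem:degree-three-multiplicity} the module $M$ consists of two copies of $\lien_{S, 3}$. Writing $(\qnil_{S, 1})^{\otimes 2} = \tilde{\lien}_{S, 2} \oplus R$ with $\tilde{\lien}_{S, 2}$ the degree-two component, we have $\mu(\tilde{\lien}_{S, 2}) = \qnil_{S, 2} \neq 0$ (by $[\lien_{S, 1}, \lien_{S, 1}] = \lien_{S, 2}$, \cref{prop:classical-module}), so $\pi$ annihilates $\mu(\tilde{\lien}_{S, 2})$ and the target of $\phi|_M$ is a subquotient of $\qnil_{S, 1} \otimes R$. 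The multiplicity of $\lien_{S, 3}$ in $\qnil_{S, 1} \otimes R$ equals its multiplicity in $(\qnil_{S, 1})^{\otimes 3}$ minus its multiplicity in $\qnil_{S, 1} \otimes \tilde{\lien}_{S, 2}$, i.e. $2 - 1 = 1$, where the value $1$ comes from the multiplicity-freeness in \cref{prop:classical-module} together with the fact that $\lien_{S, 3}$ does occur there (by $[\lien_{S, 1}, \lien_{S, 2}] = \lien_{S, 3}$). Hence $\phi|_M$ is a $\UqlS$-homomorphism $\lien_{S, 3}^{\oplus 2} \to \lien_{S, 3}^{\oplus m}$ with $m \leq 1$, which by Schur's lemma cannot be injective; its kernel contains a copy of $\lien_{S, 3}$, which we take for $\tilde{\lien}^q_{S, 3}$, and \cref{lem:degree-three-property} supplies the coideal property for $\lien^q_{S, 3} = \mu_1^2(\tilde{\lien}^q_{S, 3})$. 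This image is nonzero because $\mu_1^2$ is injective on $M$ for generic $q$: already at $q = 1$ the two copies of $\lien_{S, 3}$ appearing in $\mu_1^2(M)$ — the one of PBW-degree two exhibited in \cref{subsec:comments-result} and the genuine $\lien_{S, 3} \subset \lien$ of PBW-degree one — are linearly independent in $U(\lien)$, so $\tilde{\lien}^q_{S, 3}$, being simple, maps isomorphically onto its image.

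The main obstacle is the uniqueness, that is the non-vanishing of $\phi|_M$; granting it, $\ker(\phi|_M)$ is a proper and hence, being $\UqlS$-stable, exactly one-copy submodule of $\lien_{S, 3}^{\oplus 2}$, so $\tilde{\lien}^q_{S, 3}$ is forced, and by the converse above any degree-three component with the coideal property equals $\mu_1^2(\ker(\phi|_M))$. The difficulty is that $\phi|_M \neq 0$ is genuinely a statement about $q \neq 1$: the discussion in \cref{subsec:comments-result} shows that at $q = 1$ every copy of $\lien_{S, 3}$ in $M$ gives a coideal, so $\phi|_M$ vanishes identically there and no specialization argument helps. I would prove $\phi|_M \neq 0$ by computing on the lowest-weight vector $v = Y E_s$, where $Y$, $E_s$ are the lowest-weight vectors of $\qnil_{S, 2}$, $\qnil_{S, 1}$: expanding $\Delta(v) = \Delta(Y)\Delta(E_s)$ with \cref{prop:coideal-degree-two} and extracting the $\alpha_s$-bidegree-$(1, 2)$ contribution of the shape $(\text{middle of }\Delta(Y)) \cdot (K_s \otimes E_s)$, one checks that its component in $\Uqb \otimes \big( (\qnil_{S, 1} \cdot \qnil_{S, 1}) / \qnil_{S, 2} \big)$ is nonzero, so that $C_1 \not\subset \ker(\phi|_M)$ and a fortiori $\phi|_M \neq 0$. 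This is precisely where the exceptional types must be inspected one by one — the finitely many pairs $(\lieg, \alpha_s)$ with $\lien_{S, 3} \neq 0$ occur only for $G_2, F_4, E_6, E_7, E_8$ — since I see no uniform reason for the non-vanishing; this bookkeeping, together with the multiplicity statement used above, is carried out in \cref{lem:degree-three-multiplicity}.
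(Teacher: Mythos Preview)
Your existence argument coincides with the paper's: both invoke the multiplicity-two input from \cref{lem:degree-three-multiplicity} together with $\UqlS$-equivariance of $1+\sigma_1+\sigma_1\sigma_2$ to locate a copy of $\lien_{S,3}$ satisfying the hypothesis of \cref{lem:degree-three-property}. The paper frames the condition as $(1+\sigma_1+\sigma_1\sigma_2)(C)\subseteq C_2$, where $C_1,C_2$ are the two copies sitting in $\tilde{\lien}_{S,2}\otimes\qnil_{S,1}$ and $\qnil_{S,1}\otimes\tilde{\lien}_{S,2}$; your formulation via $\ker\phi$ amounts to the same thing. Your explicit converse --- that the coideal property on $\mu_1^2(C)$ forces $\phi(C)=0$, via the $\alpha_s$-bidegree splitting and the fact that $\tmapS$ respects the $\UqlS$-decomposition of the second tensor factor --- is correct and a genuine sharpening the paper leaves implicit.

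The real divergence is uniqueness. The paper appeals to specialization at $q=1$ and \cref{subsec:comments-result}; your objection to this is well founded. At $q=1$ the braiding becomes the flip $\tau$, and since $C_1$ is antisymmetric in the first two factors one has $(1+\tau_1+\tau_1\tau_2)|_{C_1}=\tau_1\tau_2$ with image in $\qnil_{S,1}\otimes\tilde{\lien}_{S,2}$, while $(1+\tau_1+\tau_1\tau_2)|_{C_2}=\id$; hence $\phi|_M=0$ identically at $q=1$ and \emph{every} choice of degree-three component is a coideal classically. What \cref{subsec:comments-result} singles out is primitivity $\Delta(z)=z\otimes 1+1\otimes z$, which is strictly stronger than the coideal condition, so the specialization argument as stated does not pin down $\qnil_{S,3}$. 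Your alternative --- verify $\phi|_M\neq 0$ directly for the finitely many exceptional pairs $(\lieg,\alpha_s)$ --- is the honest fix, but two points in your write-up are not yet closed. First, \cref{lem:degree-three-multiplicity} contains only the multiplicity statement, not the non-vanishing you defer to it. Second, your test vector $\tilde Y\otimes E_s$ lies in $C_1$ only when the lowest root of $\alpha_s$-degree three equals the lowest root of degree two plus $\alpha_s$, which is itself a case check. The $G_2$ computation in \cref{sec:example-G2} does confirm $\phi|_M\neq 0$ for $q\neq 1$ (the coefficient $1-q^{-4}$ on $L_1$ is exactly the obstruction), so your strategy should go through once the remaining verifications are supplied.
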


\begin{proof}
To simplify the notation we write $\lieu_k = \lien_{S, k}$ for the classical $\liel_S$-modules and $\lieu^q_k = \qnil_{S, k}$ for the $\UqlS$-modules in $\Uqn$ (where $k = 1, 2, 3$).
Let us also write $\liec^q_3$ for the (abstract) $\UqlS$-module quantizing the classical $\liel_S$-module $\lieu_3$ for the decomposition under consideration.
Then we are looking for a (concrete) $\UqlS$-module $\lieu^q_3 \subset \Uqn$ satisfying the condition in the claim.
We have already determined the (unique) components $\lieu^q_1$ and $\lieu^q_2$ of degrees one and two.
We also write $\tilde{\lieu}^q_2 \subset \lieu^q_1 \otimes \lieu^q_1$ for the unique simple component such that $\mu(\tilde{\lieu}^q_2) = \lieu^q_2$.

Both tensor products $\tilde{\lieu}^q_2 \otimes \lieu^q_1$ and $\lieu^q_1 \otimes \tilde{\lieu}^q_2$ contain a unique simple component of type $\liec^q_3$, by the quantum analogue of \cref{prop:classical-module} (since both modules are weight multiplicity-free).
Hence we have at least two components of type $\liec^q_3$ inside $(\lieu^q_1)^{\otimes 3}$, which we write as
\[
C^q_1 \subset \tilde{\lieu}^q_2 \otimes \lieu^q_1 \subset (\lieu^q_1)^{\otimes 3}, \quad
C^q_2 \subset \lieu^q_1 \otimes \tilde{\lieu}^q_2 \subset (\lieu^q_1)^{\otimes 3}.
\]
Denote the corresponding $\UqlS$-module maps by $\iota^q_1$ and $\iota^q_2$, so that $C^q_n = \iota^q_n(\liec^q_3)$ for $n = 1, 2$.

By the quantum analogue of \cref{lem:degree-three-multiplicity}, there are exactly two components of type $\liec^q_3$ inside $(\lieu^q_1)^{\otimes 3}$.
Hence any $\UqlS$-module map $\liec^q_3 \to (\lieu^q_1)^{\otimes 3}$ can be written as a linear combination of $\iota^q_1$ and $\iota^q_2$. Equivalently, any $\UqlS$-module isomorphic to $\liec^q_3$ in $(\lieu^q_1)^{\otimes 3}$ is of the form
\[
C^q_{c_1, c_2} = \iota^q_{c_1, c_2} (\liec^q_3), \quad
\iota^q_{c_1, c_2} = c_1 \iota^q_1 + c_2 \iota^q_2,
\]
provided the map $\iota^q_{c_1, c_2}$ is non-zero (which is the case if either $c_1$ or $c_2$ is non-zero).

We claim that the image of $C^q_{c_1, c_2}$ in $\Uqn$ is non-zero when $\iota^q_{c_1, c_2}$ is non-zero (for $q$ an indeterminate or transcendental).
To see this we consider the specialization at $q = 1$ (for which we drop the decoration $q$ everywhere). We can choose $\iota_2$ and $\iota_1$ such that $\iota_2 = \tau_1 \tau_2 \iota_1$, where $\tau$ denotes the flip map.
Then $C_{c_1, c_2} \subset \lieu_1^{\otimes 3}$ is of the form
\[
c_1 \sum_i \tilde{y}_i \otimes x_i + c_2 \sum_i x_i \otimes \tilde{y}_i,
\]
where $\sum_i \tilde{y}_i \otimes x_i \in C_1 \subset \tilde{\lieu}_2 \otimes \lieu_1$.
Writing $y_i = \mu(\tilde{y}_i)$ for the image of $\tilde{y}_i$ in $U(\lien)$, we see that the image $\mu_1^2(C_{c_1, c_2})$ of $C_{c_1, c_2}$ in $U(\lien)$ consists of elements of the form
\[
c_1 \sum_i y_i x_i + c_2 \sum_i x_i y_i = (c_1 + c_2) \sum_i y_i x_i + c_2 \sum_i [x_i, y_i].
\]
Recall that $[\lieu_1, \lieu_2] = \lieu_3$ by \cref{prop:classical-module}.
Using this and the PBW theorem for $U(\lien)$, it follows that the image of $C_{c_1, c_2}$ is non-zero, provided either $c_1$ or $c_2$ is non-zero.
Hence any $\UqlS$-module $\lieu^q_3 \subset (\lieu^q_1)^3$ is of the form $\lieu^q_3 = \mu_1^2 (C^q_{c_1, c_2})$ for some $c_1$ and $c_2$.

Next, we investigate for which choice of coefficients $c_1$ and $c_2$ this gives rise to a left coideal.
By \cref{lem:degree-three-property} this is the case if and only if
\[
\mu_2 (1 + \sigma_1 + \sigma_1 \sigma_2) (C^q_{c_1, c_2}) \subset \lieu^q_1 \otimes \lieu^q_2.
\]
Note that $\mu_2(C^q_2) \subset \lieu^q_1 \otimes \lieu^q_2$ by definition of $C^q_2$, and this is the only component of type $\liec^q_3$ in $\lieu^q_1 \otimes \lieu^q_2$.
Since $\mu_2 (1 + \sigma_1 + \sigma_1 \sigma_2)$ is a $\UqlS$-module map, the condition above becomes
\begin{equation}
\label{eq:coideal-condition}
\mu_2 (1 + \sigma_1 + \sigma_1 \sigma_2) (C^q_{c_1, c_2}) \subseteq \mu_2(C^q_2).
\end{equation}

We can study this condition in terms the maps $\iota^q_1$ and $\iota^q_2$.
Let us write
\[
(1 + \sigma_1 + \sigma_1 \sigma_2) \iota^q_n = t_{n, 1} \iota^q_1 + t_{n, 2} \iota^q_2 = \iota^q_{t_{n, 1}, t_{n, 2}}, \quad n = 1, 2.
\]
Then $(1 + \sigma_1 + \sigma_1 \sigma_2) \iota^q_{c_1, c_2} = \iota^q_{d_1, d_2}$, where $d_n = c_1 t_{1, n} + c_2 t_{2, n}$ for $n = 1, 2$.
In particular we have $(1 + \sigma_1 + \sigma_1 \sigma_2) \iota^q_{c_1, c_2} (\liec^q_3) = C^q_{d_1, d_2}$.
As discussed above, the image of $C^q_{d_1, d_2}$ in $\Uqn$ is non-zero if either $d_1$ or $d_2$ is non-zero.
In particular, it follows that the $\UqlS$-module maps $\mu_2 \iota_1$ and $\mu_2 \iota_2$ are linearly independent.
Then let us consider
\[
\mu_2 (1 + \sigma_1 + \sigma_1 \sigma_2) \iota_{c_1, c_2} = d_1 \mu_2 \iota^q_1 + d_2 \mu_2 \iota^q_2.
\]
Since $\mu_2(C^q_2) = \mu_2 \iota_2 (\liec^q_3)$, we see that \eqref{eq:coideal-condition} is satisfied if and only if $d_1 = c_1 t_{1, 1} + c_2 t_{2, 1} = 0$.
We can find a solution with either $c_1$ or $c_2$ non-zero, which takes care of the existence of a $\UqlS$-module $\lieu^q_3 \subset \Uqn$ satisfying the condition on the coproduct.
Hence the only thing left to show is uniqueness of $\lieu^q_3$, for which we need some additional preparation.

The braiding $\sigma$ acts on the simple component $\tilde{\lieu}^q_2 \subset \lieu^q_1 \otimes \lieu^q_1$ by a scalar, by Schur's lemma.
Its value can be computed along the lines of \cite[Section 8.4, Corollary 23]{klsc} and takes the form $- q^s$ for some rational number $s$.
Here the minus sign corresponding to the classical antisymmetry of the component (as in \cref{prop:classical-module}), while the exponent $s$ can be computed explicitly in terms of Lie algebraic data (in our conventions we have $s = c(\lambda_2) / 2 - c(\lambda_1)$, where $\lambda_1$ and $\lambda_2$ are the highest weights of $\lieu_1$ and $\lieu_2$, and $c(\lambda) = (\lambda, \lambda + 2 \rho)$ is the classical value of the Casimir).
All we need to know here is that $s \neq 0$ for the cases under consideration, as in \cref{lem:degree-three-multiplicity} (we omit the details of this verification).

Given this fact, it follows from the definition of $C^q_1$ and $C^q_2$ that we have
\[
\sigma_1 \iota^q_1 = - q^s \iota^q_1, \quad
\sigma_2 \iota^q_2 = - q^s \iota^q_2.
\]
Using the braid equation, it is not difficult to show that $\sigma_1 \sigma_2$ maps $C^q_1$ to $C^q_2$ and $\sigma_2 \sigma_1$ maps $C^q_2$ to $C^q_1$.
Then there are non-zero coefficients $a_1$ and $a_2$ such that
\[
\sigma_1 \sigma_2 \iota^q_1 = a_1 \iota^q_2, \quad
\sigma_2 \sigma_1 \iota^q_2 = a_2 \iota^q_1.
\]
Finally let us write $\sigma_1 \iota^q_2 = b_1 \iota^q_1 + b_2 \iota^q_2$ for the decomposition of this $\UqlS$-module map.

Then, taking into account these relations, we find the identities
\[
\begin{split}
(1 + \sigma_1 + \sigma_1 \sigma_2) \iota^q_1
& = (1 - q^s) \iota^q_1 + a_1 \iota^q_2, \\
(1 + \sigma_1 + \sigma_1 \sigma_2) \iota^q_2
& = (1 - q^s) b_1 \iota^q_1 + (1 + (1 - q^s) b_2) \iota^q_2.
\end{split}
\]
Using the previously introduced notation, this means that $t_{1, 1} = 1 - q^s$ and $t_{2, 1} = (1 - q^s) b_1$.
Recall that we need to satisfy the condition $d_1 = c_1 t_{1, 1} + c_2 t_{2, 1} = 0$.
We see that this can be written as $(1 - q^s) (c_1 + c_2 b_1) = 0$, which is equivalent to $c_1 + c_2 b_1 = 0$.

If $b_1 \neq 0$ then both $c_1$ and $c_2$ must be non-zero (since we are excluding the case $c_1 = c_2 = 0$). In this case $C^q_{c_1, c_2} = C^q_{- b_1, 1}$ is uniquely determined.
Suppose on the contrary that $b_1 = 0$, so that $\sigma_1 \iota^q_2 = b_2 \iota^q_2$. Then we have $\sigma_2 \sigma_1 \iota^q_2 = b_2 \sigma_2 \iota^q_2 = - q^s b_2 \iota^q_2$.
On the other hand, we have $\sigma_2 \sigma_1 \iota^q_2 = a_2 \iota_1$ with $a_2 \neq 0$. This would imply that $\iota^q_1$ and $\iota^q_2$ are linearly dependent, which is a contradiction (another possibility it to use specialization at $q = 1$).
\end{proof}

\subsection{Main result}

We now come back to the equivariant quantizations $\qnil_1 \oplus \cdots \oplus \qnil_r$ from \cref{prop:equivariant-quantizations} to investigate the corresponding coideal property.
We recall that each summand $\qnil_k$ is $\bbN$-graded by some simple root $\alpha_{i_k}$.
In the next result we show that, if each $\qnil_k$ has at most three graded components, then it can be uniquely chosen in such a way that $\bbC \oplus \qnil_k$ is a left $\Uqb$-coideal.
This restriction can be reformulated in terms of the simple components appearing in the semisimple Lie algebra $\lieg$, as we do below.

\begin{theorem}
\label{thm:coideal-quantizations}
Let $\lieg$ be a complex semisimple Lie algebra of rank $r$, which does not contain components of type $F_4$, $E_7$, $E_8$.
Let $S_1 \subset \cdots \subset S_r$ be an increasing sequence of proper subsets of $\simpleroots$, with $\lien = \lien_1 \oplus \cdots \oplus \lien_r$ the corresponding decomposition of the positive nilradical.

Then there exists a unique finite-dimensional subspace
\[
\qnil = \qnil_1 \oplus \cdots \oplus \qnil_r \subset \Uqn
\]
such that each summand $\qnil_k$ satisfies the following properties:
\begin{itemize}
\item $\qnil_k$ is an $U_q(\liel_k)$-module under $\triangleright$, corresponding to the $\liel_k$-module $\lien_k$,
\item each graded component $\qnil_{k, n}$ is contained in the $n$-fold product of $\qnil_{k, 1}$,
\item $\bbC \oplus \qnil_k$ is a left $\Uqb$-coideal, that is $\Delta(\bbC \oplus \qnil_k) \subset \Uqb \otimes (\bbC \oplus \qnil_k)$.
\end{itemize}
\end{theorem}

\begin{proof}
We have already shown in \cref{prop:equivariant-quantizations} that an equivariant quantization of $\lien = \lien_1 \oplus \cdots \oplus \lien_r$ exists (but is not unique, in general), that is the first two properties are satisfied.
For the coideal property we first recall that, as given in \eqref{eq:lie-algebra-inclusion}, the sequence $S_1 \subset \cdots \subset S_r$ determines a sequence of complex semisimple Lie algebras
\[
\{0\} = \lieg_1 \subset \lieg_2 \subset \cdots \subset \lieg_r \subset \lieg_{r + 1} = \lieg.
\]
We have that $\lien_k \subset \lieg_{k + 1}$ is the positive nilradical corresponding to the inclusion $S_k \subset S_{k + 1}$, more precisely $S_k = S_{k + 1} \backslash \{ \alpha_{i_k} \}$.
In other words, each summand $\lien_k$ appears as the positive nilradical $\lien_S$ of a semisimple Lie algebra corresponding to a proper subset $S$ obtained by removing one simple root, which is the setting we considered above.

If $\lieg$ does not contain simple components of type $F_4$, $E_7$, $E_8$, then each summand $\lien_k$ has graded components of degree at most three, as can be easily deduced by inspecting the highest roots of the various complex simple Lie algebras.
Then by combining \cref{prop:coideal-degree-one}, \cref{prop:coideal-degree-two} and \cref{prop:coideal-degree-three}, which correspond respectively to the cases of degree one, two and three, we deduce that $\qnil_k$ can be chosen in such a way that $\bbC \oplus \qnil_k$ is a left $\Uqb$-coideal.
This choice is necessarily unique in degrees one and two, while \cref{prop:coideal-degree-three} guarantees that this is the case in degree three as well.
\end{proof}

This result refines \cref{prop:equivariant-quantizations} and gives a satisfactory quantization of the decomposition $\lien = \lien_1 \oplus \cdots \oplus \lien_r$ together with the action of the various Levi factors.
Indeed, under the conditions stated in the result, we have a unique quantization within $\Uqn$ satisfying an analogue of the existence of a Lie algebra structure on each summand $\lien_k$.

We strongly suspect that it should be possible to remove the restriction that $\lieg$ should not contain simple components of type $F_4$, $E_7$ and $E_8$.
Doing so would require the analysis of the graded components $\qnil_{k, n}$ up to degree $n = 6$ (but we note that the case of degree four would suffice for $F_4$ and $E_7$).
In principle this analysis can performed following the strategy employed in the degree-three case, but the details get significantly harder.
In particular, it would be desirable to have a more conceptual understanding for the multiplicity of $\lien_{k, n}$ inside the tensor product $\lien_{k, 1}^{\otimes n}$, rather than resorting to explicit case-by-case checks as in \cref{lem:degree-three-multiplicity}.
In any case, this leads us to formulate the following conjecture.

\begin{conjecture}
\label{conj:coideal}
The results of \cref{thm:coideal-quantizations} hold for any finite-dimensional complex semisimple Lie algebra $\lieg$, without any restriction on the simple components.
\end{conjecture}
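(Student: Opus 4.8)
The plan is to push the induction on the $\alpha_s$-degree underlying \cref{thm:coideal-quantizations} past degree three. It suffices to take $\lieg$ simple, so that $\lien_k = \lien_S$ for $S = \simpleroots \setminus \{\alpha_s\}$; since the argument in the excerpt already disposes of every graded component up to degree three, and since the only simple types whose highest root contains a simple root with coefficient exceeding three are $F_4$, $E_7$ (coefficient four) and $E_8$ (coefficient six), what remains is a finite list: degree four for suitable nodes of $F_4$ and $E_7$, and degrees four, five and six for suitable nodes of $E_8$. Fix such a pair $(S,n)$ and assume inductively that $\qnil_{S,1},\dots,\qnil_{S,n-1}$ have been constructed with $\bbC \oplus \qnil_{S,1} \oplus \cdots \oplus \qnil_{S,n-1}$ a left $\Uqb$-coideal. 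By \cref{lem:delta-shuffle} and the quantum shuffle product \cref{prop:shuffle-product} one has $\Delta(X_1 \cdots X_n) = \tmapS \circ \pmap^{(n)}(X_1,\dots,X_n)$ for $X_i \in \qnil_{S,1}$; splitting $\pmap^{(n)}$ into its summands $\mu_{i,j} B_{i,j}$, the contributions with $j \le 1$ or $j = n$ are automatically of the required form, because $\tmapS(V \otimes 1) = V \otimes 1$, $\tmapS(1 \otimes V) \subset \UqlS^{\geq 0} \otimes V$ by \cref{cor:action-T-simple}, and $\tmapS$ preserves $\Uqb \otimes W$ for every $\UqlS$-submodule $W$ by \cref{prop:properties-T}. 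The only genuine constraints come from the summands with $2 \le j \le n-1$: the copy $\tilde{\lien}^q_{S,n} \subset \qnil_{S,1}^{\otimes n}$ of the $\liel_S$-module $\lien_{S,n}$ must be chosen so that, for each such $j$, the second-leg part of $B_{n-j,j}$ applied to $\tilde{\lien}^q_{S,n}$ lands, after iterated multiplication, inside the already-fixed $\qnil_{S,j}$. This is the degree-$n$ analogue of \cref{lem:degree-three-property}, and verifying it is bookkeeping in the shuffle formula.

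The problem then becomes finite-dimensional linear algebra, exactly as in \cref{prop:coideal-degree-three}. The space of $\UqlS$-equivariant embeddings $\lien_{S,n} \hookrightarrow \qnil_{S,1}^{\otimes n}$ has dimension $m_n$, the multiplicity of $\lien_{S,n}$ in $\lien_{S,1}^{\otimes n}$, and since $\mu$ and the braidings $B_{i,j}$ are $\UqlS$-equivariant, each constraint above translates, via Schur's lemma, into finitely many scalar linear conditions on the embedding; one must show that this system has a one-dimensional solution space. As in degree three I would get existence by checking that the system stays underdetermined, and extract uniqueness from the classical limit $q = 1$: there the coideal-compatible embedding is the one built from iterated brackets, for which $\Delta(z) = z \otimes 1 + 1 \otimes z$, and it is essentially unique because the primitive elements of $U(\lien)$ are exactly $\lien$; lower semicontinuity of the rank of the (Laurent-polynomial) coefficient matrix then transports one-dimensionality to $q$ transcendental, and the same specialisation shows the chosen $\tilde{\lien}^q_{S,n}$ maps isomorphically under iterated multiplication, so that one sets $\qnil_{S,n} := \mu^{n-1}(\tilde{\lien}^q_{S,n})$. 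Inducting over $n$ and then over the summands $\qnil_1,\dots,\qnil_r$ yields the conjecture.

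Thus the two ingredients needed beyond \cref{thm:coideal-quantizations} are: the multiplicity $m_n$ (and the ranks of the associated condition maps) for the finitely many exceptional pairs $(S,n)$, extending \cref{lem:degree-three-multiplicity}; and the comparison with the classical limit. The main obstacle is the former. For $E_8$ in degree six the tensor power $\qnil_{S,1}^{\otimes 6}$ is large, $m_6$ can exceed two, and one must control the four second-leg conditions $j = 2,3,4,5$ simultaneously, so a brute-force check is heavy and, more importantly, sheds no light on why the ranks come out right; a secondary subtlety is that passing \emph{existence} (as opposed to uniqueness) from $q = 1$ to generic $q$ is not formal and relies on the ``underdetermined'' bound above. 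I expect the decisive new idea to be an intrinsic description of $\bbC \oplus \qnil_S$ — for instance as the span, in each degree, of the $q$-Lie elements inside the twisted quantum Schubert cell $\tschub(w_S)$ (the image of a suitable $q$-analogue of the Dynkin idempotent), or as a braided left coideal inside the transmutation $\underline{\Uqg}$ whose image under $\tmapS$ is automatically a $\Uqb$-coideal. Such a description would make the coideal property structural and eliminate the degree-by-degree and type-by-type casework altogether.
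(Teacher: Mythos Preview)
This statement is left as an open \emph{conjecture} in the paper; there is no proof to compare against. The paper's own commentary after \cref{thm:coideal-quantizations} says essentially what you do: the degree-three strategy should in principle extend to degrees four through six in the exceptional types, but ``the details get significantly harder,'' and a conceptual replacement for the case-by-case \cref{lem:degree-three-multiplicity} would be desirable. Your proposal is an honest outline of that same strategy rather than a proof, and you flag the gaps yourself.

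The decisive gap is existence. Your semicontinuity argument correctly transports \emph{uniqueness} from $q=1$ to transcendental $q$: the generic rank of the constraint matrix is at least its rank at any special value, so the generic solution space has dimension at most one. But the same inequality runs the wrong way for existence---nothing you have written rules out the rank jumping to $m_n$ at generic $q$ and collapsing the solution space to zero. In degree three this is avoided precisely because \cref{lem:degree-three-multiplicity} forces $m_3 = 2$ while there is a \emph{single} scalar constraint (the $C_1$-component of $1+\sigma_1+\sigma_1\sigma_2$ must vanish), so the rank is squeezed between $1$ and $1$. For $n \ge 4$ there are $n-2$ constraint families indexed by $j \in \{2,\dots,n-1\}$, each potentially contributing several scalar conditions (one per copy of $\lien_{S,n}$ outside the allowed target), and you have not shown their total is at most $m_n - 1$. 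Without that upper bound---whether by the direct exceptional-type computation you allude to or by the structural reformulation via $q$-Lie elements or transmutation you sketch at the end---the argument does not close, which is exactly why the paper records the statement as a conjecture rather than a theorem.
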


As partial evidence beyond \cref{thm:coideal-quantizations}, we point out that for any semisimple Lie algebra $\lieg$ of rank $r$ we have the following: for any $k \in \{ 1, \cdots, r \}$ and $n \in \{ 1, 2, 3 \}$, we have that $\bbC \oplus \qnil_{k, 1} \oplus \cdots \oplus \qnil_{k, n}$ is a left $\Uqb$-coideal.
Indeed, in the proof of \cref{prop:coideal-degree-three} there was no restriction on the semisimple Lie algebra being considered.

\section{The example of $G_2$}
\label{sec:example-G2}

In this section we illustrate the results obtained above in the case of the exceptional Lie algebra $G_2$.
In particular, we consider the decomposition $\lien = \lien_1 \oplus \lien_2$ such that $\lien_2$ has graded components up to degree three, which is the most complicated case for the purpose of checking the coideal property.
This is well-suited to illustrating the complexity of the situation, while keeping computations manageable.
In fact, we are going to skip most of the relevant details, only giving indications for how they can be performed.

\subsection{Decomposition}

We consider the simple Lie algebra $\lieg = G_2$.
We denote by $\alpha_1$ the short root and by $\alpha_2$ the long root, which means that
\[
(\alpha_1, \alpha_2^\vee) = -1, \quad
(\alpha_2, \alpha_1^\vee) = -3.
\]
The corresponding positive roots are given by
\[
\roots^+ = \{ \alpha_1, \ \alpha_2, \ \alpha_1 + \alpha_2, \ 2 \alpha_1 + \alpha_2, \ 3 \alpha_1 + \alpha_2, \ 3 \alpha_1 + 2 \alpha_2 \}.
\]

We consider the decomposition $\lien = \lien_1 \oplus \lien_2$ with respect to the sequence $S_1 \subset S_2$ with $S_2 = \{ \alpha_2 \}$.
The corresponding Levi factors $\liel_1$ and $\liel_2$ are given by
\[
\liel_1 = \lieh, \quad
\liel_2 = \lieh \oplus \lspan \{ e_{\alpha_2}, f_{\alpha_2} \}.
\]
We note that the semisimple part of $\liel_2$ can be identified with $\mathfrak{sl}_2$.

For the positive nilradical we have $\lien_1 = \bbC e_{\alpha_2}$ and $\lien_2 = \lien_{2, 1} \oplus \lien_{2, 2} \oplus \lien_{2, 3}$, where the grading in this case is with respect to $\alpha_1$. These three components are given by
\[
\lien_{2, 1} = \lspan \{ e_{\alpha_1 + \alpha_2}, e_{\alpha_1} \}, \quad
\lien_{2, 2} = \lspan \{ e_{2 \alpha_1 + \alpha_2} \}, \quad
\lien_{2, 3} = \lspan \{ e_{3 \alpha_1 + 2 \alpha_2}, e_{3 \alpha_1 + \alpha_2} \}.
\]
These are all $\mathfrak{sl}_2$-modules by the adjoint action of the semisimple part of $\liel_2$. We note that the components of degrees one and three correspond to the fundamental module, while the component of degree two corresponds to the trivial module.

\subsection{Quantum root vectors}

Our goal is to exhibit explicitly the equivariant quantization $\qnil = \qnil_1 \oplus \qnil_2$ corresponding to the decomposition above.
We begin by introducing quantum root vectors that are adapted to this situation, as in \cref{prop:schubert-component}.
This means that the longest word $w_0$ should be factorized as $w_0 = w_{0, S_2} w_{S_2}$ where $S_2 = \{ \alpha_2 \}$.
In other words, $w_0$ should begin with the simple reflection $s_2$, giving  the following enumeration.

\begin{lemma}
Consider $w_0 = s_2 s_1 s_2 s_1 s_2 s_1$. The corresponding positive roots are
\[
\begin{gathered}
\beta_1 = \alpha_2, \
\beta_2 = \alpha_1 + \alpha_2, \
\beta_3 = 3 \alpha_1 + 2 \alpha_2, \\
\beta_4 = 2 \alpha_1 + \alpha_2, \ 
\beta_5 = 3 \alpha_1 + \alpha_2, \
\beta_6 = \alpha_1.
\end{gathered}
\]
\end{lemma}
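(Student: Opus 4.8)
The plan is to read off the roots $\beta_k$ directly from the definition $\beta_k = s_{i_1} \cdots s_{i_{k-1}}(\alpha_{i_k})$ recorded in the subsection on the Weyl group, specialized to the word $w_0 = s_2 s_1 s_2 s_1 s_2 s_1$, so that $i_1 = 2$, $i_2 = 1$, $i_3 = 2$, and so on alternately. The only inputs are the actions of the two simple reflections on the root lattice, which are forced by the Cartan data $(\alpha_1, \alpha_2^\vee) = -1$ and $(\alpha_2, \alpha_1^\vee) = -3$: writing $s_j(\lambda) = \lambda - (\lambda, \alpha_j^\vee)\alpha_j$ one gets $s_1(\alpha_1) = -\alpha_1$, $s_1(\alpha_2) = \alpha_2 + 3\alpha_1$, $s_2(\alpha_2) = -\alpha_2$, $s_2(\alpha_1) = \alpha_1 + \alpha_2$, extended linearly.

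First I would tabulate $s_1$ and $s_2$ on each of the six positive roots listed at the start of the section; this is immediate from the formulas above, and it is convenient to note that $s_1$ fixes $3\alpha_1 + 2\alpha_2$ while $s_2$ fixes $2\alpha_1 + \alpha_2$, which shortens the bookkeeping. Then I would compute $\beta_1,\dots,\beta_6$ in order, applying the relevant prefix of the word from the inside out: $\beta_1 = \alpha_2$; $\beta_2 = s_2(\alpha_1) = \alpha_1 + \alpha_2$; $\beta_3 = s_2 s_1(\alpha_2) = s_2(3\alpha_1 + \alpha_2) = 3\alpha_1 + 2\alpha_2$; $\beta_4 = s_2 s_1 s_2(\alpha_1) = s_2 s_1(\alpha_1 + \alpha_2) = s_2(2\alpha_1 + \alpha_2) = 2\alpha_1 + \alpha_2$; $\beta_5 = s_2 s_1 s_2 s_1(\alpha_2) = 3\alpha_1 + \alpha_2$; and $\beta_6 = s_2 s_1 s_2 s_1 s_2(\alpha_1) = \alpha_1$, the last two each obtained by a couple more applications of the table.

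Finally I would observe that the resulting $\beta_1,\dots,\beta_6$ are pairwise distinct and coincide with the full list $\roots^+$ recorded above; since a reduced word of length $n$ for a Weyl group element produces exactly $n$ distinct positive roots, this also re-confirms that $s_2 s_1 s_2 s_1 s_2 s_1$ is reduced of length $6 = |\roots^+|$, hence equals $w_0$. There is no genuine obstacle: the argument is a short, finite computation. The only point that needs attention is consistency of conventions — in particular the labeling of $\alpha_1$ as the short and $\alpha_2$ as the long root, and the fact (from the earlier material) that the weight of the quantum root vector $E_{\beta_k} = T_{a_1}\cdots T_{a_{k-1}}(E_{a_k})$ is exactly $s_{a_1}\cdots s_{a_{k-1}}(\alpha_{a_k})$, so that the combinatorial enumeration above indeed governs the quantum root vectors adapted to $S_2 = \{\alpha_2\}$.
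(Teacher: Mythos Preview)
Your proposal is correct: it is the straightforward inside-out computation of $\beta_k = s_{i_1}\cdots s_{i_{k-1}}(\alpha_{i_k})$ using the reflection formulas dictated by the Cartan data, and all six values check. The paper in fact states this lemma without proof, treating it as a routine verification, so your approach is exactly the intended one.
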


We define the quantum root vectors $E_{\beta_1}, \cdots, E_{\beta_6}$ with respect to $w_0$ as above.
We also note that, with respect to this enumeration, the decomposition of $\lien$ can be written as
\[
\lien_{1, 1} = \lien_{\beta_1}, \quad
\lien_{2, 1} = \lien_{\beta_2} \oplus \lien_{\beta_6}, \quad
\lien_{2, 2} = \lien_{\beta_4}, \quad
\lien_{2, 3} = \lien_{\beta_3} \oplus \lien_{\beta_5}.
\]
It follows from \cref{prop:schubert-component} that the degree-one components are given by
\[
\qnil_{1, 1} = \lspan \{ E_{\beta_1} \}, \quad
\qnil_{2, 1} = \lspan \{ E_{\beta_2}, E_{\beta_6} \}.
\]
So it remains to find the higher-degree components $\qnil_{2, 2}$ and $\qnil_{2, 3}$.

Next, we are going to determine the $q$-commutators $[E_{\beta_i}, E_{\beta_j}]_q = E_{\beta_i} E_{\beta_j} - q^{(\beta_i, \beta_j)} E_{\beta_j} E_{\beta_i}$ in the case $i < j$.
We recall that, in our conventions for the coproduct, we have the identity $E_i \triangleright X = [E_i, X]_q$ for the generators of the quantized Levi factor, as in \cref{prop:adjoint-action-schubert}.

In the next result we use some standard notations related to quantized enveloping algebras, such as the $q$-numbers $[n]_q := \frac{q^n - q^{-n}}{q - q^{-1}}$ and the divided powers $E_\beta^{(n)} := E_\beta^n / [n]_{q_\beta}!$.

\begin{lemma}
\label{lem:G2-qcommutators}
The non-zero $q$-commutators $[E_{\beta_i}, E_{\beta_j}]_q$ with $i < j$ are given by
\[
\begin{gathered}
[E_{\beta_1}, E_{\beta_3}]_q = (q^6 - q^4 - q^2 + 1) E_{\beta_2}^{(3)}, \quad
[E_{\beta_1}, E_{\beta_4}]_q = (q^3 - q^{-1}) E_{\beta_2}^{(2)}, \\
[E_{\beta_1}, E_{\beta_5}]_q = - (q + q^{-1}- q^{-3}) E_{\beta_3} + (q - q^{-1}) E_{\beta_2} E_{\beta_4}, \\
[E_{\beta_1}, E_{\beta_6}]_q = E_{\beta_2}, \quad
[E_{\beta_2}, E_{\beta_4}]_q = [3]_q E_{\beta_3}, \quad
[E_{\beta_2}, E_{\beta_5}]_q = (q^3 - q^{-1}) E_{\beta_4}^{(2)}, \\
[E_{\beta_2}, E_{\beta_6}]_q = [2]_q E_{\beta_4}, \quad
[E_{\beta_3}, E_{\beta_5}]_q = (q^6 - q^4 - q^2 + 1) E_{\beta_4}^{(3)}, \\
[E_{\beta_3}, E_{\beta_6}]_q = (q^3 - q^{-1}) E_{\beta_4}^{(2)}, \quad
[E_{\beta_4}, E_{\beta_6}]_q = [3]_q E_{\beta_5}.
\end{gathered}
\]
\end{lemma}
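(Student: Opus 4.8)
The plan is to prove the lemma by direct computation: realize each quantum root vector $E_{\beta_i}$ explicitly as an element of $\Uqn$, and then evaluate the fifteen $q$-commutators $[E_{\beta_i}, E_{\beta_j}]_q$ with $i < j$. First I would fix conventions, normalizing the invariant form by $(\alpha_1, \alpha_1) = 2$, $(\alpha_2, \alpha_2) = 6$ and $(\alpha_1, \alpha_2) = -3$, so that $q_1 = q$ and $q_2 = q^3$, and using \eqref{eq:lusztig-relations} together with the formulas for $T_i$ on the other generators from \cite[Section I.6.7]{brown-goodearl}. A useful observation that drastically cuts the bookkeeping is that the Coxeter number of $G_2$ is $6$, so that $(s_2 s_1)^3 = w_0$ and hence $T_2 T_1 T_2 T_1 T_2 T_1 = (T_2 T_1)^3 = T_{w_0}$; consequently $E_{\beta_1} = E_2$, $E_{\beta_3} = (T_2 T_1)(E_{\beta_1})$, $E_{\beta_5} = (T_2 T_1)^2(E_{\beta_1})$, while $E_{\beta_2} = T_2(E_1)$, $E_{\beta_4} = (T_2 T_1)(E_{\beta_2})$ and $E_{\beta_6} = (T_2 T_1)^2(E_{\beta_2})$. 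Thus only the two seed vectors $E_2$ and $T_2(E_1)$ (which is proportional to $[E_2, E_1]_q$), together with two iterations of the operator $T_2 T_1$, need to be worked out, after which every $E_{\beta_i}$ is an explicit polynomial in $E_1$ and $E_2$. One should also record that $E_{\beta_6}$ has weight $\alpha_1$ and so is a scalar multiple of $E_1$; this scalar equals $\pm 1$ in the chosen convention and can be normalized to $1$ via the rescaling discussed in \cref{rem:lusztig-automorphisms}, which does not alter the form of the relations.

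With the $E_{\beta_i}$ in hand, each $q$-commutator is computed and reduced to the PBW basis of $\Uqn$ using the quantum Serre relations of $G_2$. The shape of the answer is almost entirely determined before any computation: by the convexity of the root enumeration and \cref{prop:q-commutation-relations}, $[E_{\beta_i}, E_{\beta_j}]_q$ lies in the span of ordered monomials $E_{\beta_{i+1}}^{k_{i+1}} \cdots E_{\beta_{j-1}}^{k_{j-1}}$, and imposing that such a monomial have weight $\beta_i + \beta_j$ leaves, in all but one case, a single possibility. In particular this forces the five adjacent commutators $[E_{\beta_k}, E_{\beta_{k+1}}]_q$ to vanish and reduces each remaining identity to the determination of one scalar; the only genuinely two-term case is $[E_{\beta_1}, E_{\beta_5}]_q$, of weight $3\alpha_1 + 2\alpha_2$, where both the root vector $E_{\beta_3}$ and the product $E_{\beta_2} E_{\beta_4}$ are admissible. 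To cross-check the scalars and to organize the calculation, I would use the $\UqlS$-module structure with $S = \{\alpha_2\}$: since $E_2 = E_{\beta_1}$ and $F_2$ generate the $\mathfrak{sl}_2$-part of the Levi and act by $E_2 \triangleright X = [E_2, X]_q$ and $F_2 \triangleright X = [F_2, X] K_2$, the $q$-Leibniz rule $E_2 \triangleright (XY) = (E_2 \triangleright X) Y + q^{(\alpha_2, \wt X)} X (E_2 \triangleright Y)$ propagates each identity along the $\mathfrak{sl}_2$-strings, and specialization at $q = 1$ must return the structure constants of $G_2$.

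The main obstacle is purely the bulk of the straightening, concentrated in the higher-degree relations: $[E_{\beta_1}, E_{\beta_3}]_q = (q^6 - q^4 - q^2 + 1) E_{\beta_2}^{(3)}$, $[E_{\beta_2}, E_{\beta_5}]_q = (q^3 - q^{-1}) E_{\beta_4}^{(2)}$, $[E_{\beta_3}, E_{\beta_6}]_q = (q^3 - q^{-1}) E_{\beta_4}^{(2)}$, and above all $[E_{\beta_3}, E_{\beta_5}]_q = (q^6 - q^4 - q^2 + 1) E_{\beta_4}^{(3)}$, of weights up to $6\alpha_1 + 3\alpha_2$. These require careful tracking of the divided powers $E_\beta^{(n)} = E_\beta^n / [n]_{q_\beta}!$, with $q_\beta$ depending on whether $\beta$ is short or long, and of the $q$-integer prefactors $[2]_q$ and $[3]_q$. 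A secondary subtlety is to fix the overall normalizations of $E_{\beta_3}$, $E_{\beta_4}$ and $E_{\beta_5}$ consistently across all identities; this is exactly where the $\mathfrak{sl}_2$-equivariance is most useful, as it rigidifies the relative scalings within the fundamental-type pairs $\{E_{\beta_2}, E_{\beta_6}\}$ and $\{E_{\beta_3}, E_{\beta_5}\}$ and the trivial-type vector $E_{\beta_4}$. Once the conventions are pinned down, verifying the displayed table is then a finite, if lengthy, check.
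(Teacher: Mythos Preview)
Your plan is sound: this lemma is a finite verification, and you have organized it sensibly by exploiting the $(T_2 T_1)$-periodicity, the convexity constraint from \cref{prop:q-commutation-relations}, and the $\UqlS$-module structure with $S = \{\alpha_2\}$ to cut down the work. The paper, by contrast, gives no argument at all beyond invoking the computer algebra package \emph{QuaGroup} in GAP (with a warning that one must override the default reduced expression for $w_0$). So both proofs are direct verification; yours is by hand with structural shortcuts, the paper's is by machine. Your approach has the advantage of being self-contained and of explaining \emph{why} most of the relations are one-term (weight and convexity), while the paper's has the advantage of being instantaneous and immune to arithmetic slips.

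One caveat on your write-up: the remark that the scalar relating $E_{\beta_6}$ to $E_1$ ``can be normalized to $1$ via the rescaling discussed in \cref{rem:lusztig-automorphisms}, which does not alter the form of the relations'' is slightly misleading for the present purpose. \cref{rem:lusztig-automorphisms} guarantees only that the \emph{shape} of the $q$-commutation relations is preserved under such rescalings; the specific numerical coefficients in the lemma would change. Since the lemma asserts exact coefficients in a fixed convention, you must compute $E_{\beta_6}$ in that convention and use whatever scalar you actually get, rather than rescaling it away. In the Brown--Goodearl convention this scalar does turn out to be $1$, so no harm is done in practice, but the justification you give is not quite the right one.
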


\begin{proof}
These relations can be checked using the package \emph{QuaGroup} available in GAP \cite{GAP4}.
We warn the reader that the reduced decomposition $w_0 = s_2 s_1 s_2 s_1 s_2 s_1$ is not the default one for this package, so that it should be specified to reproduce the results above.
\end{proof}

Using the $q$-commutators above, we see explicitly that $\qnil_{2, 1} = \lspan \{ E_{\beta_2}, E_{\beta_6} \}$ is invariant under the left adjoint action of (the positive part of) $U_q(\liel_2)$, in agreement with the general result from \cref{prop:adjoint-action-schubert}.
On the other hand, we also see that $[E_{\beta_1}, E_{\beta_4}]_q \neq 0$ in the quantum setting, which means that $E_{\beta_4}$ is not a highest weight vector for the left adjoint action of $U_q(\liel_2)$. It follows that $E_{\beta_4}$ is not going to be an element of $\qnil_{2, 2}$.
Similar remarks apply to $E_{\beta_3}$ and $E_{\beta_5}$ with respect to the degree-three component $\qnil_{2, 3}$.

\subsection{Braiding and coideal}

Before discussing the higher-degree components $\qnil_{2, 2}$ and $\qnil_{2, 3}$, we determine the braiding on the degree-one component $\qnil_{2, 1} = \lspan \{ E_{\beta_2}, E_{\beta_6} \}$.

\begin{lemma}
The braiding $\sigma: \qnil_{2, 1} \otimes \qnil_{2, 1} \to \qnil_{2, 1} \otimes \qnil_{2, 1}$ is given by
\[
\begin{split}
\sigma(E_{\beta_2} \otimes E_{\beta_2}) & = q^2 E_{\beta_2} \otimes E_{\beta_2}, \\
\sigma(E_{\beta_2} \otimes E_{\beta_6}) & = q^{-1} E_{\beta_6} \otimes E_{\beta_2} + (q^3 - q^{-3}) q^{-1} E_{\beta_2} \otimes E_{\beta_6}, \\
\sigma(E_{\beta_6} \otimes E_{\beta_2}) & = q^{-1} E_{\beta_2} \otimes E_{\beta_6}, \\
\sigma(E_{\beta_6} \otimes E_{\beta_6}) & = q^2 E_{\beta_6} \otimes E_{\beta_6}.
\end{split}
\]
\end{lemma}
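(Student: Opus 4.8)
The plan is to realize $\sigma = \sigma_{S}$ with $S = \simpleroots \setminus \{\alpha_1\} = \{\alpha_2\}$ as the braiding of the copy of $U_q(\mathfrak{sl}_2)$ inside $\UqlS$ generated by $E_{\alpha_2}, F_{\alpha_2}, K_{\alpha_2}^{\pm 1}$ acting on $\qnil_{2,1}$, and then to read off its action from the weight estimates built into the normalization of $\sigma$ recalled in \cref{sec:quantum-preliminaries} and \cref{sec:coideal}, together with the fact that $\sigma$ is a morphism of $U_q(\mathfrak{sl}_2)$-modules.

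First I would record the relevant module structure. Since $\beta_6 = \alpha_1$ and $\beta_2 = \alpha_1 + \alpha_2$, \cref{lem:action-degree-one} identifies $\qnil_{2,1} = \lspan\{E_{\beta_2}, E_{\beta_6}\}$ with the simple $U_q(\mathfrak{sl}_2)$-module generated by the lowest weight vector $E_{\beta_6} = E_1$; from \cref{lem:G2-qcommutators} one reads $E_{\alpha_2} \triangleright E_{\beta_6} = [E_{\beta_1}, E_{\beta_6}]_q = E_{\beta_2}$ while $[E_{\beta_1}, E_{\beta_2}]_q = 0$, so $\qnil_{2,1}$ is the two-dimensional vector module, $E_{\beta_2}$ is its highest weight vector, and $F_{\alpha_2} \triangleright E_{\beta_2}$ is a nonzero multiple of $E_{\beta_6}$. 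Consequently $\qnil_{2,1}^{\otimes 2}$ decomposes, as a $U_q(\mathfrak{sl}_2)$-module, into a three-dimensional simple summand $V_3$ containing $E_{\beta_2} \otimes E_{\beta_2}$ and a one-dimensional summand $V_1$ of weight $\beta_2 + \beta_6$; in particular $\sigma$ preserves $\qnil_{2,1}^{\otimes 2}$ and, by Schur's lemma, acts by a scalar on each of $V_3$ and $V_1$.

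Next I would exploit the explicit form $\sigma(x_\mu \otimes y_\nu) = q^{(\mu,\nu)} y_\nu \otimes x_\mu + \sum_i y_i \otimes x_i$ with $\wt(y_i) > \nu$, $\wt(x_i) < \mu$, and with the inner product that of $G_2$ (so $(\beta_2, \beta_2) = (\beta_6, \beta_6) = 2$ and $(\beta_2, \beta_6) = -1$). Since $\beta_2$ and $\beta_6$ are, respectively, the top and bottom weights of $\qnil_{2,1}$, three of the four values carry no correction term, giving $\sigma(E_{\beta_2} \otimes E_{\beta_2}) = q^2\, E_{\beta_2} \otimes E_{\beta_2}$, $\sigma(E_{\beta_6} \otimes E_{\beta_6}) = q^2\, E_{\beta_6} \otimes E_{\beta_6}$ and $\sigma(E_{\beta_6} \otimes E_{\beta_2}) = q^{-1}\, E_{\beta_2} \otimes E_{\beta_6}$; for the remaining value the only admissible correction lies along $E_{\beta_2} \otimes E_{\beta_6}$, so $\sigma(E_{\beta_2} \otimes E_{\beta_6}) = q^{-1}\, E_{\beta_6} \otimes E_{\beta_2} + c\, E_{\beta_2} \otimes E_{\beta_6}$ for some scalar $c$ still to be determined. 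The first identity shows $\sigma$ acts by $q^2$ on $V_3$. To find $c$, restrict $\sigma$ to the two-dimensional weight-$(\beta_2 + \beta_6)$ subspace of $\qnil_{2,1}^{\otimes 2}$, which is spanned by $E_{\beta_2} \otimes E_{\beta_6}$ and $E_{\beta_6} \otimes E_{\beta_2}$ and meets $V_3$ and $V_1$ each in a line; on it $\sigma$ has trace $c$ and determinant $-q^{-2}$, so its eigenvalues are $q^2$ and the $V_1$-scalar, whence that scalar equals $-q^{-4}$ and $c = q^2 + (-q^{-4}) = q^2 - q^{-4} = (q^3 - q^{-3}) q^{-1}$, which is the asserted value.

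I do not expect a genuine obstacle: the argument is essentially bookkeeping once the normalizations are pinned down. The one delicate point is exactly that normalization --- one must use the precise choice $\sigma = \sigma_S = \tau \circ \calR_S$ with $\calR_S = \kappa \circ \calQ_S$ fixed in \cref{sec:coideal}, so that $\kappa$ multiplies by $q^{(\mu,\nu)}$ in the $G_2$-weights and $\calQ_{S,0} = 1 \otimes 1$, since a different universal R-matrix would rescale the off-diagonal coefficient. As a cross-check one can compute $\calQ_S$ directly on $\qnil_{2,1}^{\otimes 2}$: only its degree-$0$ and degree-$1$ parts act nontrivially there, because $E_{\alpha_2}^2 \triangleright = F_{\alpha_2}^2 \triangleright = 0$ on the two-dimensional module, and the degree-$1$ part is a multiple of $F_{\alpha_2} \otimes E_{\alpha_2}$ with coefficient $q_{\alpha_2} - q_{\alpha_2}^{-1} = q^3 - q^{-3}$, using $q_{\alpha_2} = q^{(\alpha_2, \alpha_2)/2} = q^3$; combined with the $U_q(\mathfrak{sl}_2)$ structure constants this reproduces the same formulas and makes the origin of $q^3 - q^{-3}$ transparent.
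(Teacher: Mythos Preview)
Your proof is correct and follows the same strategy the paper sketches: using that $E_{\beta_2}$ and $E_{\beta_6}$ are the highest and lowest weight vectors of the two-dimensional $U_q(\mathfrak{sl}_2)$-module $\qnil_{2,1}$ to read off three of the four values directly from the normalization $\sigma(x_\mu \otimes y_\nu) = q^{(\mu,\nu)} y_\nu \otimes x_\mu + (\text{higher terms})$. The paper leaves the fourth value as a ``routine computation''; your eigenvalue argument via trace and determinant on the middle weight space, together with Schur's lemma on the two simple summands, is a clean way to extract the remaining coefficient $c = q^2 - q^{-4}$ without writing out $\calQ_S$ explicitly, and your cross-check via the degree-one part of $\calQ_S$ is consistent with it.
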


\begin{proof}
Follows from routine computations.
Note that in our conventions for the braiding $\sigma$ we have $\sigma(E_{\beta_i} \otimes E_{\beta_2}) = q^{(\beta_i, \beta_2)} E_{\beta_2} \otimes E_{\beta_i}$ and $\sigma(E_{\beta_6} \otimes E_{\beta_i}) = q^{(\beta_6, \beta_i)} E_{\beta_i} \otimes E_{\beta_6}$, since $E_{\beta_2}$ and $E_{\beta_6}$ are respectively the highest and lowest weight vectors.
\end{proof}

To determine the component $\qnil_{2, 2}$ of degree two, we look for a simple component corresponding to the trivial module $\lien_{2, 2}$ inside the tensor product $\qnil_{2, 1} \otimes \qnil_{2, 1}$.
Also recall that this component is uniquely determined, as discussed in \cref{cor:uniqueness-quantization}.

\begin{lemma}
The element
\[
Y_{\beta_4} = E_{\beta_2} \otimes E_{\beta_6} - q^3 E_{\beta_6} \otimes E_{\beta_2}
\]
is invariant under the left adjoint action of $U_q(\liel_2)$ and satisfies $\sigma(Y_{\beta_4}) = -q^{-4} Y_{\beta_4}$.
\end{lemma}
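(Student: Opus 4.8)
The plan is to obtain both assertions quickly from the description of $\qnil_{2,1}$ as a $U_q(\liel_2)$-module together with the braiding formulas already computed above. First I would set up the module structure: since $S_2 = \{\alpha_2\}$ we have $E_2 = E_{\beta_1}$, and by \cref{lem:G2-qcommutators} one has $E_2 \triangleright E_{\beta_6} = [E_{\beta_1}, E_{\beta_6}]_q = E_{\beta_2}$ while $E_2 \triangleright E_{\beta_2} = [E_{\beta_1}, E_{\beta_2}]_q = 0$, so $\qnil_{2,1} = \lspan\{E_{\beta_2}, E_{\beta_6}\}$ is the two-dimensional fundamental module for the copy of $\mathfrak{sl}_2$ inside $\liel_2$, with $E_{\beta_2}$ a highest and $E_{\beta_6}$ a lowest weight vector. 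Hence $\qnil_{2,1} \otimes \qnil_{2,1}$ decomposes as a three-dimensional module plus a one-dimensional trivial module, the latter being precisely the unique degree-two component isolated in \cref{cor:uniqueness-quantization}. I would also note that $E_{\beta_2} \otimes E_{\beta_6}$ and $E_{\beta_6} \otimes E_{\beta_2}$ both have weight $\beta_2 + \beta_6 = 2\alpha_1 + \alpha_2 = \beta_4$, a weight which pairs trivially with $\alpha_2^\vee$.

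Next I would compute the braiding eigenvalue directly. Applying $\sigma$ to the two terms of $Y_{\beta_4}$ using the formulas
\[
\sigma(E_{\beta_2} \otimes E_{\beta_6}) = q^{-1} E_{\beta_6} \otimes E_{\beta_2} + (q^3 - q^{-3}) q^{-1} E_{\beta_2} \otimes E_{\beta_6}, \qquad \sigma(E_{\beta_6} \otimes E_{\beta_2}) = q^{-1} E_{\beta_2} \otimes E_{\beta_6}
\]
established above and collecting terms, the coefficient of $E_{\beta_6} \otimes E_{\beta_2}$ is $q^{-1}$ and the coefficient of $E_{\beta_2} \otimes E_{\beta_6}$ is $(q^3 - q^{-3}) q^{-1} - q^3 \cdot q^{-1} = -q^{-4}$; factoring out $-q^{-4}$ recovers $Y_{\beta_4}$, which gives $\sigma(Y_{\beta_4}) = -q^{-4} Y_{\beta_4}$.

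For the invariance I would then argue as follows. The braiding $\sigma$ is a $U_q(\liel_2)$-module endomorphism of $\qnil_{2,1} \otimes \qnil_{2,1}$, so it acts as a scalar on each of its two isotypic components; since $\sigma(E_{\beta_2} \otimes E_{\beta_2}) = q^2 E_{\beta_2} \otimes E_{\beta_2}$ and $E_{\beta_2} \otimes E_{\beta_2}$ generates the three-dimensional summand, that scalar equals $q^2$ there. As $-q^{-4} \neq q^2$, the eigenvector $Y_{\beta_4}$ must lie in the one-dimensional trivial summand, hence is annihilated by $E_2$ and $F_2$, i.e.\ invariant under $U_q(\liel_2)$. (Alternatively one checks directly that $E_2 \triangleright Y_{\beta_4} = q^3\, E_{\beta_2} \otimes E_{\beta_2} - q^3\, E_{\beta_2} \otimes E_{\beta_2} = 0$ from the adjoint action, and since $Y_{\beta_4}$ has $\mathfrak{sl}_2$-weight zero while $E_2$ is injective on the weight-zero subspace of the three-dimensional summand, this again forces $Y_{\beta_4}$ into the trivial summand.) There is no genuine obstacle here: the only point requiring care is bookkeeping with the conventions — that $E_2 = E_{\beta_1}$ for the reduced word $w_0 = s_2 s_1 s_2 s_1 s_2 s_1$, the precise form of $E_i \triangleright (-)$ and of the $K_i$-action on weight vectors, and the normalization of the braiding recorded above — after which both assertions reduce to the elementary manipulations indicated.
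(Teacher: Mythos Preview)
Your proposal is correct. The paper does not give a proof for this lemma --- it is stated as a routine computational check within the illustrative $G_2$ section --- and your argument supplies exactly the kind of direct verification the paper has in mind: apply the recorded braiding formulas to obtain the eigenvalue $-q^{-4}$, and deduce invariance either via Schur's lemma (since $\sigma$ is a $U_q(\liel_2)$-module map acting by the distinct scalars $q^2$ and $-q^{-4}$ on the two simple summands of $\qnil_{2,1}\otimes\qnil_{2,1}$) or by the direct check $E_2 \triangleright Y_{\beta_4} = 0$ that you also indicate.
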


Next, we denote by $C^q_1$ and $C^q_2$ the two simple components corresponding to $\lien_{2, 3}$ inside the tensor product $(\qnil_{2, 1})^{\otimes 3}$, using the same notation as in the proof of \cref{prop:coideal-degree-three}.
Then it is immediate to check that we have the lowest weight vectors
\[
L_1 = Y_{\beta_4} \otimes E_{\beta_6} \in C^q_1, \quad
L_2 = E_{\beta_6} \otimes Y_{\beta_4} \in C^q_2.
\]
As discussed in \cref{prop:coideal-degree-three}, we have that $1 + \sigma_1 + \sigma_1 \sigma_2$ maps $C^q_1 \oplus C^q_2$ into itself.
More precisely, we have the following identities.

\begin{lemma}
We have
\[
\begin{split}
(1 + \sigma_1 + \sigma_1 \sigma_2) (L_1) & = (1 - q^{-4}) L_1 + q L_2, \\
(1 + \sigma_1 + \sigma_1 \sigma_2) (L_2) & = (q^{-1} - q^{-5}) L_1 + (q^2 + 1 - q^{-2}) L_2.
\end{split}
\]
\end{lemma}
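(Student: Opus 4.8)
The plan is to reduce the statement to a short computation with the braiding matrix recorded in the preceding lemma, organized so as to avoid expanding everything in the full basis of $(\qnil_{2,1})^{\otimes 3}$. Throughout, $\sigma_1$ denotes the braiding acting on the first two tensor legs and $\sigma_2$ the braiding acting on the last two, in the leg-numbering convention of \cref{prop:shuffle-product}.

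First I would exploit the block structure of $L_1$ and $L_2$. Since $L_1 = Y_{\beta_4} \otimes E_{\beta_6}$ with $Y_{\beta_4}$ occupying the first two legs, we have $\sigma_1(L_1) = \sigma(Y_{\beta_4}) \otimes E_{\beta_6} = -q^{-4} L_1$ by the identity $\sigma(Y_{\beta_4}) = -q^{-4} Y_{\beta_4}$ from the lemma above; symmetrically, writing $L_2 = E_{\beta_6} \otimes Y_{\beta_4}$ gives $\sigma_2(L_2) = -q^{-4} L_2$, and hence $\sigma_1\sigma_2(L_2) = -q^{-4}\sigma_1(L_2)$. Thus only $\sigma_2(L_1)$, $\sigma_1\sigma_2(L_1)$ and $\sigma_1(L_2)$ remain to be computed, and each is obtained by applying the four braiding rules of the preceding lemma monomial-by-monomial to the explicit expansions $L_1 = E_{\beta_2}\otimes E_{\beta_6}\otimes E_{\beta_6} - q^3 E_{\beta_6}\otimes E_{\beta_2}\otimes E_{\beta_6}$ and $L_2 = E_{\beta_6}\otimes E_{\beta_2}\otimes E_{\beta_6} - q^3 E_{\beta_6}\otimes E_{\beta_6}\otimes E_{\beta_2}$.

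Second I would identify the outputs inside $C_1 \oplus C_2$. Each of $\sigma_1$ and $\sigma_1\sigma_2$ is a $U_q(\liel_2)$-module endomorphism of $(\qnil_{2,1})^{\otimes 3}$ preserving $C_1 \oplus C_2$, as in the proof of \cref{prop:coideal-degree-three}; since $L_1, L_2$ are lowest weight vectors spanning the lowest weight spaces of $C_1$ and $C_2$ respectively, the vectors $(1 + \sigma_1 + \sigma_1\sigma_2)(L_k)$ are again lowest weight vectors in $C_1 \oplus C_2$ of the same weight, and therefore lie in $\lspan\{L_1, L_2\}$. Hence it suffices to read off the coefficients of the two monomials $E_{\beta_2}\otimes E_{\beta_6}\otimes E_{\beta_6}$ and $E_{\beta_6}\otimes E_{\beta_6}\otimes E_{\beta_2}$, which occur in $L_1$ with coefficients $(1,0)$ and in $L_2$ with coefficients $(0,-q^3)$; this pins down the two expansion coefficients, and matching the coefficient of the remaining monomial $E_{\beta_6}\otimes E_{\beta_2}\otimes E_{\beta_6}$ provides a consistency check. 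Carrying this out yields precisely the two stated identities.

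There is no genuine obstacle here: the content is a finite bookkeeping computation with rank-two matrices. The only points that require care are fixing the leg-numbering convention consistently and keeping track of the off-diagonal term $(q^3 - q^{-3})q^{-1}\, E_{\beta_2}\otimes E_{\beta_6}$ in $\sigma(E_{\beta_2}\otimes E_{\beta_6})$, which is the sole source of the higher powers of $q$ appearing in the final answer.
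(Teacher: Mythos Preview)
Your proposal is correct and is essentially the computation the paper leaves implicit (the paper states the lemma without proof, in keeping with the declared intent to ``skip most of the relevant details''). Your use of $\sigma(Y_{\beta_4}) = -q^{-4} Y_{\beta_4}$ to immediately obtain $\sigma_1(L_1) = -q^{-4}L_1$ and $\sigma_1\sigma_2(L_2) = -q^{-4}\sigma_1(L_2)$, together with the observation that the result must lie in $\lspan\{L_1,L_2\}$, is a clean way to organize the bookkeeping; carrying it through yields $\sigma_1\sigma_2(L_1) = q L_2$ and $\sigma_1(L_2) = q^{-1}L_1 + q^2 L_2$, from which the stated identities follow at once.
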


In particular, using these identities we obtain
\[
(1 + \sigma_1 + \sigma_1 \sigma_2) (L_1 - q L_2) = - (q^3 - q^{-1}) L_2 \in C^q_2.
\]
Denote by $\qnil_{2, 3}$ the $U_q(\liel_2)$-module generated by the lowest weight vector $L_1 - q L_2$.
Then \cref{prop:coideal-degree-three} guarantees that $\bbC \oplus \qnil_2$ is a left coideal.

\subsection{Results}

We summarize the computations above in the next proposition.

\begin{proposition}
Define the elements
\begin{align*}
X_{\beta_1} & = E_{\beta_1}, &
X_{\beta_2} & = E_{\beta_2}, &
X_{\beta_3} & = E_{\beta_1} X_{\beta_5} - q^{-3} X_{\beta_5} E_{\beta_1}, \\
X_{\beta_4} & = E_{\beta_2} E_{\beta_6} - q^3 E_{\beta_6} E_{\beta_2}, &
X_{\beta_5} & = X_{\beta_4} E_{\beta_6} - q E_{\beta_6} X_{\beta_4}, &
X_{\beta_6} & = E_{\beta_6}.
\end{align*}
Furthermore set $\qnil_1 = \qnil_{1, 1}$ and $\qnil_2 = \qnil_{2, 1} \oplus \qnil_{2, 2} \oplus \qnil_{2, 3}$, where
\begin{align*}
\qnil_{1, 1} & = \lspan \{ X_{\beta_1} \}, &
\qnil_{2, 1} & = \lspan \{ X_{\beta_2}, X_{\beta_6} \}, \\
\qnil_{2, 2} & = \lspan \{ X_{\beta_4} \}, &
\qnil_{2, 3} & = \lspan \{ X_{\beta_3}, X_{\beta_5} \}.
\end{align*}
Then $\qnil = \qnil_1 \oplus \qnil_2$ is the unique equivariant quantization from \cref{thm:coideal-quantizations} corresponding to the sequence $S_1 \subset S_2$ with $S_2 = \{ \alpha_2 \}$.
\end{proposition}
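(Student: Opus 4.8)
The plan is to reduce everything to the general existence-and-uniqueness statement \cref{thm:coideal-quantizations} and then to check, by inspecting the explicit formulas recorded above, that the subspace $\qnil = \qnil_1 \oplus \qnil_2$ written down here has the three defining properties; the uniqueness part of \cref{thm:coideal-quantizations} will then force it to be the equivariant quantization attached to $S_1 \subset S_2$. The theorem does apply, since the highest root of $G_2$ is $3\alpha_1 + 2\alpha_2$, so that $\lien_2$ has graded components (with respect to $\alpha_1$) only in degrees one, two and three.

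For the degree-one and degree-two components I would simply unwind the relevant uniqueness statements. By \cref{lem:action-degree-one} and \cref{prop:schubert-component} we have $\qnil_{1, 1} = U_q(\liel_1) \triangleright E_{\beta_1} = \lspan\{X_{\beta_1}\}$, as $\liel_1 = \lieh$, while $\qnil_{2, 1}$ is generated from the lowest weight vector $E_{\beta_6}$ by the positive part of $U_q(\liel_2)$; the relations $[E_{\beta_1}, E_{\beta_6}]_q = E_{\beta_2}$ and $[E_{\beta_1}, E_{\beta_2}]_q = 0$ from \cref{lem:G2-qcommutators} then give $\qnil_{2, 1} = \lspan\{X_{\beta_2}, X_{\beta_6}\}$, a two-dimensional module matching $\lien_{2, 1}$. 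In degree two the component is unique by \cref{cor:uniqueness-quantization}; the $U_q(\liel_2)$-invariant $Y_{\beta_4} = E_{\beta_2} \otimes E_{\beta_6} - q^3 E_{\beta_6} \otimes E_{\beta_2}$ of the braiding lemma above realizes it inside $(\qnil_{2, 1})^{\otimes 2}$, and its image $X_{\beta_4}$ under the multiplication map is non-zero (most cleanly seen by specializing at $q = 1$, where $X_{\beta_4}$ becomes a non-zero multiple of the classical root vector $e_{\beta_4}$, as $\beta_2 + \beta_6 = \beta_4$ is a root). Hence $\qnil_{2, 2} = \lspan\{X_{\beta_4}\}$ is contained in the twofold product of $\qnil_{2, 1}$, and the left $\Uqb$-coideal property of $\bbC \oplus \qnil_{k, 1}$ and of $\bbC \oplus \qnil_{2, 1} \oplus \qnil_{2, 2}$ is given by \cref{prop:coideal-degree-one} and \cref{prop:coideal-degree-two}.

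The degree-three step carries the actual content, and here I would follow the argument in the proof of \cref{prop:coideal-degree-three}. The two copies of $\lien_{2, 3}$ inside $(\qnil_{2, 1})^{\otimes 3}$ have lowest weight vectors $L_1 = Y_{\beta_4} \otimes E_{\beta_6} \in C_1$ and $L_2 = E_{\beta_6} \otimes Y_{\beta_4} \in C_2$; the braiding computations above give $(1 + \sigma_1 + \sigma_1 \sigma_2)(L_1 - q L_2) = -(q^3 - q^{-1}) L_2 \in C_2$, which is exactly the hypothesis of \cref{lem:degree-three-property} (multiplication carries $\lspan\{Y_{\beta_4}\}$ into $\qnil_{2, 2}$). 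So the $U_q(\liel_2)$-module generated by the multiplication image of $L_1 - q L_2$ is an admissible choice of $\qnil_{2, 3}$ that makes $\bbC \oplus \qnil_2$ a left $\Uqb$-coideal; what is left is to match it with the stated span. That image is $X_{\beta_4} E_{\beta_6} - q E_{\beta_6} X_{\beta_4} = X_{\beta_5}$, its lowest weight vector, and applying the positive generator $E_{\beta_1}$ of $U_q(\liel_2)$ under $\triangleright$ gives $E_{\beta_1} \triangleright X_{\beta_5} = [E_{\beta_1}, X_{\beta_5}]_q = E_{\beta_1} X_{\beta_5} - q^{-3} X_{\beta_5} E_{\beta_1} = X_{\beta_3}$; since $\lien_{2, 3}$ is two-dimensional this exhausts the module, so $\qnil_{2, 3} = \lspan\{X_{\beta_5}, X_{\beta_3}\}$, with both generators non-zero (again by $q = 1$, since $\beta_4 + \beta_6 = \beta_5$ and $\beta_1 + \beta_5 = \beta_3$ are roots) and lying in the threefold product of $\qnil_{2, 1}$.

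Once the three properties are in hand, the uniqueness in \cref{thm:coideal-quantizations} identifies $\qnil = \qnil_1 \oplus \qnil_2$ with the equivariant quantization associated to the sequence $S_1 \subset S_2$ with $S_2 = \{\alpha_2\}$, which is the assertion. The only real obstacle is the degree-three bookkeeping: locating the combination $L_1 - q L_2$ selected by \cref{lem:degree-three-property} inside the explicit tensor cube, matching its multiplication image with $X_{\beta_5}$ (and then $X_{\beta_3}$ with $E_{\beta_1} \triangleright X_{\beta_5}$), and keeping track of the successive non-vanishings; everything else is a routine assembly of the lemmas in this section with the general results of \cref{sec:coideal}.
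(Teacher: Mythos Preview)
Your proposal is correct and follows essentially the same approach as the paper: the paper's own proof is the terse ``Follows from the computations illustrated above,'' together with the observation that $X_{\beta_5}$ is the multiplication image of $L_1 - q L_2$ and $X_{\beta_3} = E_{\beta_1} \triangleright X_{\beta_5}$, which is exactly the degree-three bookkeeping you carry out. Your write-up simply spells out in more detail how the lemmas of the section assemble into a verification of the three defining properties, after which uniqueness from \cref{thm:coideal-quantizations} finishes the argument.
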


\begin{proof}
Follows from the computations illustrated above.
Here we note that $X_{\beta_5} = L_1 - q L_2$ is the lowest weight vector of $\qnil_{2, 3}$, while the highest weight vector $X_{\beta_3}$ is obtained using the left adjoint action as $X_{\beta_3} = E_2 \triangleright X_{\beta_5} = E_{\beta_1} X_{\beta_5} - q^{-3} X_{\beta_5} E_{\beta_1}$.
\end{proof}

We note that in the classical limit $X_{\beta_3}$, $X_{\beta_4}$, $X_{\beta_5}$ reduce to commutators of root vectors, as it should be for the positive nilradical $\lien$.
On the other hand, we note that in the quantum setting these are \emph{not} given by $q$-commutators, although they take a similar form.
For example, we see that $[E_{\beta_2}, E_{\beta_6}]_q = E_{\beta_2} E_{\beta_6} - q^{-1} E_{\beta_6} E_{\beta_2}$ does not coincide with $X_{\beta_4}$.

Furthermore, we see that $\qnil$ consists of three elements belonging to the chosen PBW-basis, namely $E_{\beta_1}$, $E_{\beta_2}$, $E_{\beta_6}$, together with three other elements that are not of this type.
Below we give their expressions in terms of the PBW-basis.

\begin{corollary}
With respect to the chosen basis of $\Uqn$, we have
\[
\begin{gathered}
X_{\beta_3} = - q^3 [3]_q! E_{\beta_3} + (q - q^{-1}) q^2 [2]_q E_{\beta_2} E_{\beta_4}, \quad
X_{\beta_4} = q^4 [2]_q E_{\beta_4} - (q^4 - 1) E_{\beta_2} E_{\beta_6}, \\
X_{\beta_5} = q^4 [3]_q! E_{\beta_5} - (q - q^{-1}) q^4 [2]_q^2 E_{\beta_4} E_{\beta_6} + (q - q^{-1})^2 q^3 [2]_q^2 E_{\beta_2} E_{\beta_6}^{(2)}.
\end{gathered}
\]
\end{corollary}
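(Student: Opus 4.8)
The plan is to expand the nested definitions of $X_{\beta_3}$, $X_{\beta_4}$, $X_{\beta_5}$ directly, at each stage rewriting products of quantum root vectors into the PBW basis $E^{\mathbf{M}} = E_{\beta_1}^{(m_1)} \cdots E_{\beta_6}^{(m_6)}$ by means of the $q$-commutation relations recorded in \cref{lem:G2-qcommutators}. Since $X_{\beta_4}$ appears in the definition of $X_{\beta_5}$, which in turn appears in that of $X_{\beta_3}$, I would carry out the computation in the order $X_{\beta_4} \to X_{\beta_5} \to X_{\beta_3}$.

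For $X_{\beta_4} = E_{\beta_2} E_{\beta_6} - q^3 E_{\beta_6} E_{\beta_2}$, one uses that $(\beta_2, \beta_6) = -1$ in our normalization, so that \cref{lem:G2-qcommutators} gives $E_{\beta_2} E_{\beta_6} - q^{-1} E_{\beta_6} E_{\beta_2} = [2]_q E_{\beta_4}$; solving for $E_{\beta_6} E_{\beta_2}$ and substituting yields $X_{\beta_4} = q^4 [2]_q E_{\beta_4} - (q^4 - 1) E_{\beta_2} E_{\beta_6}$, which is the claimed identity. For $X_{\beta_5} = X_{\beta_4} E_{\beta_6} - q E_{\beta_6} X_{\beta_4}$, I would insert this PBW form of $X_{\beta_4}$, expand, and reorder the resulting monomials using $[E_{\beta_4}, E_{\beta_6}]_q = [3]_q E_{\beta_5}$ together with the relation between $E_{\beta_2}$ and $E_{\beta_6}$ already used; after collecting the divided power $E_{\beta_6}^{(2)}$ this produces the stated expression for $X_{\beta_5}$. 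Finally, $X_{\beta_3} = E_{\beta_1} X_{\beta_5} - q^{-3} X_{\beta_5} E_{\beta_1}$ is treated the same way: substitute the PBW form of $X_{\beta_5}$ and move each $E_{\beta_1}$ to the left using the $q$-commutators $[E_{\beta_1}, E_{\beta_5}]_q$, $[E_{\beta_1}, E_{\beta_4}]_q = (q^3 - q^{-1}) E_{\beta_2}^{(2)}$, $[E_{\beta_1}, E_{\beta_3}]_q = (q^6 - q^4 - q^2 + 1) E_{\beta_2}^{(3)}$ and $[E_{\beta_1}, E_{\beta_6}]_q = E_{\beta_2}$; a number of terms cancel (notably those yielding pure powers of $E_{\beta_2}$ and the $E_{\beta_2}^2 E_{\beta_6}$ contribution), leaving only $-q^3 [3]_q! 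E_{\beta_3} + (q - q^{-1}) q^2 [2]_q E_{\beta_2} E_{\beta_4}$.

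The argument is purely mechanical, so the only genuine difficulty is bookkeeping the $q$-power coefficients accurately through the nested substitutions; one should in particular keep in mind the point made after the proposition, namely that the elements $X_{\beta_k}$ are deliberately \emph{not} the $q$-commutators $[\,\cdot\,,\,\cdot\,]_q$ of quantum root vectors, so the cross terms above do not simply drop out. As an alternative, all three identities can be verified directly in the PBW basis with the \emph{QuaGroup} package in GAP \cite{GAP4}, using the reduced decomposition $w_0 = s_2 s_1 s_2 s_1 s_2 s_1$ as specified in \cref{lem:G2-qcommutators}.
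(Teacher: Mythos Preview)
Your approach is correct and coincides with the paper's own proof, which simply states that the identities follow from the relations in \cref{lem:G2-qcommutators}. You have in fact supplied considerably more detail than the paper does, but the underlying method is the same.
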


\begin{proof}
These can be obtained by using the relations from \cref{lem:G2-qcommutators}.
\end{proof}

We observe that they are all given as $\bbZ[q, q^{-1}]$-linear combinations of the fixed PBW-basis, which suggests some integrality property of this construction.

\section{Covariant differential calculi}
\label{sec:covariant-calculi}

Consider the unique equivariant quantization $\qnil = \qnil_1 \oplus \cdots \oplus \qnil_r$ corresponding to the sequence $S_1 \subset \cdots \subset S_r$ constructed in the previous sections.
We show how this construction can be used to produce covariant first-order differential calculi on the quantum flag manifolds $\Cq[G / P_{S_k}]$, in a manner compatible with the fixed sequence.
Moreover, we show that these calculi reduce to those introduced by Heckenberger-Kold in the irreducible case.

\subsection{Quantum homogeneous spaces}

We briefly recall the class of quantum homogeneous spaces considered in \cite{heko-homogeneous}, although we are only concerned with the special case of quantum flag manifolds, which are defined with respect to Hopf subalgebras.

Let $U$ be a Hopf algebra over $\bbC$ with bijective antipode and $K \subset U$ be a right coideal subalgebra.
Consider a tensor category $\calC$ of finite-dimensional left $U$-modules.
Let $\calA = U^\circ_\calC$ denote the dual Hopf algebra generated by the matrix coefficients of all left $U$-modules in $\calC$. Assume that $\calA$ separates the elements of $U$ and that its antipode is bijective.

In this setting, we define a left coideal subalgebra $\calB \subset \calA$ by
\[
\calB = \{ b \in \calA : b_{(1)} b_{(2)}(k) = \varepsilon(k) b, \ \forall k \in K \}.
\]
We assume furthermore that $K$ is $\calC$-semisimple, which implies that $\calA$ is a faithfully flat $\calB$-module. Then we call $\calB$ a \emph{quantum homogeneous space}.

\subsection{Differential calculi}

The notion of differential calculus on an algebra is fairly standard.
Here we are only concerned with the first-order part of such a calculus, as follows.

\begin{definition}
A \emph{first-order differential calculus} (FODC) over an algebra $\calB$ is a $\calB$-bimodule $\Gamma$ together with a linear map $\diff: \calB \to \Gamma$ such that $\Gamma = \lspan \{ a \diff b c : a, b, c \in \calB \}$ and $\diff$ satisfies the Leibnitz rule $\diff(a b) = \diff a b + a \diff b$.
\end{definition}

Suppose furthermore that $\calA$ is a Hopf algebra and that $\calB$ is a left $\calA$-comodule algebra, where we write $\Delta_{\calB}: \calB \to \calA \otimes \calB$ for the coaction.
In this setting we have the notion of covariant differential calculus, as defined by Woronowicz \cite{woronowicz}.

\begin{definition}
With notation as above, a FODC $\Gamma$ over $\calB$ is \emph{left-covariant} if $\Gamma$ possesses the structure of a left $\calA$-comodule $\Delta_\Gamma: \Gamma \to \calA \otimes \Gamma$ such that
\[
\Delta_\Gamma(a \diff b c) = \Delta_{\calB}(a) \cdot (\id \otimes \diff) \Delta_{\calB}(b) \cdot \Delta_{\calB}(c).
\]
\end{definition}

In the classical setting this corresponds to $\calA = \bbC[G]$ and $\calB = \bbC[G / K]$, with the coaction arising by dualizing the corresponding to the action of $G$ on forms of $G / K$.

\subsection{Tangent spaces}

By a result of Hermisson \cite{hermisson}, (finite-dimensional) left-covariant FODCs over $\calB$ are in one-to-one correspondence with certain right ideals of $\calB$.
For various reasons, it is more convenient to work with the corresponding \emph{quantum tangent spaces}, which are certain subspaces of the dual coalgebra $\calB^\circ$, as discussed in \cite[Section 4]{heko-homogeneous}.

Rather than spelling out their precise definition, we recall the following result.

\begin{proposition}[{\cite[Corollary 5]{heko-homogeneous}}]
\label{prop:calculi-correspondence}
Let $\calB \subset \calA$ be as described above. Then there is a canonical one-to-one correspondence between $n$-dimensional left-covariant FODCs over $\calB$ and $(n + 1)$-dimensional subspaces $T^\varepsilon \subset \calB^\circ$ such that
\[
\varepsilon \in T^\varepsilon, \quad
\Delta(T^\varepsilon) \subset T^\varepsilon \otimes \calB^\circ, \quad
K T^\varepsilon \subset T^\varepsilon.
\]
\end{proposition}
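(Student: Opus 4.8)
The plan is to reduce the statement to the classification of left-covariant first-order differential calculi in terms of right ideals, and then to dualise this classification to the dual coalgebra $\calB^\circ$. Throughout I would write $\calB^+ = \calB \cap \ker\varepsilon$.

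\emph{Step 1: calculi versus right ideals.} First I would recall the universal first-order calculus $\Omega^1_u(\calB) = \ker(m \colon \calB \otimes \calB \to \calB)$ with $\diff_u b = 1 \otimes b - b \otimes 1$, together with the fact that every FODC over $\calB$ is a quotient of $\Omega^1_u(\calB)$ by a sub-bimodule. Under the left $\calB$-module isomorphism $\Omega^1_u(\calB) \cong \calB \otimes \calB^+$ given by $a\,\diff_u b \mapsto a \otimes (b - \varepsilon(b) 1)$, such sub-bimodules correspond bijectively to right ideals $R \subseteq \calB^+$, and $\dim \Gamma = \dim(\calB^+/R)$. A short computation with the Leibniz rule and the coaction $\Delta_\calB \colon \calB \to \calA \otimes \calB$ then shows that the calculus is left-covariant precisely when $R$ is a left $\calA$-subcomodule of $\calB$; since the elements of $\calB$ are $K$-coinvariants, this subcomodule condition is governed by the induced action of $K$. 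This part is Hermisson's theorem together with its comodule refinement, which I would invoke from the literature rather than reprove.

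\emph{Step 2: dualising.} Given a cofinite right ideal $R \subseteq \calB^+$, set $T^\varepsilon := \{ f \in \calB^* : f(R) = 0 \}$. Because $\calB/R$ is finite-dimensional and $R \subseteq \ker\varepsilon$, one checks that $T^\varepsilon$ lies inside $\calB^\circ$, contains $\varepsilon$, and has dimension $1 + \dim(\calB^+/R)$. The comultiplication of $\calB^\circ$ is determined by $\langle \Delta f, a \otimes b \rangle = f(ab)$, so $\Delta(T^\varepsilon) \subseteq T^\varepsilon \otimes \calB^\circ$ holds exactly because $R$ is a \emph{right} ideal: then $f(rb) = 0$ for all $r \in R$ and $b \in \calB$. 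Similarly $K T^\varepsilon \subseteq T^\varepsilon$, with $K$ acting on $\calB^\circ$ through the dual of $\Delta_\calB$, is the annihilator form of the statement that $R$ is a left $\calA$-subcomodule. Conversely, from an $(n+1)$-dimensional $T^\varepsilon$ satisfying the three listed properties one recovers $R$ as the pre-annihilator $\{ b \in \calB^+ : f(b) = 0 \text{ for all } f \in T^\varepsilon \}$, a cofinite right ideal which is a subcomodule; the two assignments are mutually inverse because a cofinite subspace is closed in the finite topology on $\calB$. Composing the bijections of Steps 1 and 2 yields the asserted one-to-one correspondence with matching dimensions, and its canonicity is built into the construction.

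The step I expect to be the main obstacle is the bookkeeping in Step 2: verifying that $T^\varepsilon = R^\perp$ genuinely consists of functionals vanishing on a cofinite \emph{two-sided} ideal, so that it is a subspace of the dual coalgebra $\calB^\circ$ and the identity $\langle \Delta f, a \otimes b\rangle = f(ab)$ is actually available, and checking that the pre-annihilator recovers $R$ exactly. Both points need the right-ideal and subcomodule properties used in tandem rather than purely formal linear algebra, and it is here that the $\calC$-semisimplicity of $K$ — equivalently, the faithful flatness of $\calA$ over $\calB$ — enters.
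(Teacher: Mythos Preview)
The paper does not prove this proposition: it is quoted verbatim as \cite[Corollary 5]{heko-homogeneous}, with the preceding sentence pointing to Hermisson's ideal classification and to \cite[Section 4]{heko-homogeneous} for the passage to tangent spaces. So there is no ``paper's own proof'' to compare against beyond these references.

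Your sketch is exactly the route taken in those references: Hermisson's bijection between finite-dimensional left-covariant FODCs and $\calA$-subcomodule right ideals $R \subset \calB^+$ of finite codimension, followed by the annihilator/pre-annihilator duality $R \leftrightarrow T^\varepsilon$. The obstacle you flag is the genuine one. A priori $R$ is only a right ideal, so $R^\perp$ need not lie in $\calB^\circ$; what saves the situation is precisely the faithful-flatness/$\calC$-semisimplicity hypothesis, which in \cite{heko-homogeneous} is used to show that the relevant cofinite right ideals are automatically subcomodules and that $\calB^+/R$ sits inside a finite-dimensional $\calA$-comodule, forcing the annihilator into $\calB^\circ$. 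Your Step~2 would go through once this is made explicit; as written, the phrase ``a short computation with the Leibniz rule \ldots\ shows that the calculus is left-covariant precisely when $R$ is a left $\calA$-subcomodule'' in Step~1 already presupposes faithful flatness (this equivalence is part of Hermisson's theorem in the faithfully flat setting, not a general fact), so you should be explicit about where the hypothesis enters rather than deferring it to the end.
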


With respect to this correspondence, the quantum tangent space to a given FODC is the subspace of $T^\varepsilon$ consisting of elements vanishing at the unit of $\calB$.

Furthermore, this result shows that the classification of finite-dimensional left-covariant FODCs over $\calB$ requires the determination of the vector space $\{ f \in \calB^\circ : \dim K f < \infty \}$, which is called the \emph{locally-finite part} of the dual coalgebra $\calB^\circ$ in \cite{heko-homogeneous}.

In general it can be fairly complicated to work with the dual coalgebra $\calB^\circ$.
It would be more convenient to work with subspaces of $U$ satisfying analogous properties to \cref{prop:calculi-correspondence}, where we note that each $u \in U$ can be considered as a linear functional on $\calB$ by $b \mapsto b(u)$.
We can do so at the cost of losing the one-to-one correspondence, so that different subspaces $T$ and $T'$ of $U$ might give rise to the same left-covariant FODC over $\calB$.
Since this is not an issue for our purposes, we consider this setting in the following, as in the next lemma.

\begin{lemma}
\label{lem:tangent-covariant}
With notation as above, suppose that $K$ is a Hopf subalgebra of $U$.
Let $T \subset U$ be a finite-dimensional subspace satisfying the properties
\[
1 \in T, \quad
\Delta(T) \subset T \otimes U, \quad
K \triangleright T \subset T.
\]
Then mapping $T$ to $\calB^\circ$ gives rise to a left-covariant FODC over $\calB$.
\end{lemma}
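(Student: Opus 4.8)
The plan is to reduce the statement to \cref{prop:calculi-correspondence}. Write $\pi \colon U \to \calB^\circ$ for the map sending $u \in U$ to the linear functional $b \mapsto b(u)$ on $\calB$, and set $T^\varepsilon := \pi(T) \subseteq \calB^\circ$. The first task is to check that $\pi$ is well defined, i.e. that these functionals really lie in the restricted dual $\calB^\circ$ and not merely in $\calB^*$, and that $\pi$ is a morphism of coalgebras. Both follow from the duality between $U$ and $\calA$: for $u \in U$ and $a, b \in \calA$ one has $\langle u, ab \rangle = \langle u_{(1)}, a \rangle \langle u_{(2)}, b \rangle$, and since $\Delta(u)$ is a finite sum of simple tensors this exhibits $a \mapsto \langle u, a \rangle$ as an element of $\calA^\circ$ with $\Delta_{\calA^\circ}(\langle u, - \rangle) = \sum \langle u_{(1)}, - \rangle \otimes \langle u_{(2)}, - \rangle$; restriction of functionals along the algebra inclusion $\calB \hookrightarrow \calA$ is a coalgebra map $\calA^\circ \to \calB^\circ$, so $\pi$ is a coalgebra map with $\Delta_{\calB^\circ}(\pi(u)) = \pi(u_{(1)}) \otimes \pi(u_{(2)})$. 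Moreover $\pi(1_U) = \varepsilon$, the counit of $\calB$, because $\langle 1_U, b \rangle = \varepsilon(b)$.

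With $\pi$ in hand the three conditions of \cref{prop:calculi-correspondence} are verified as follows. The assumption $1 \in T$ gives $\varepsilon = \pi(1_U) \in T^\varepsilon$. Writing $\Delta(t) = \sum_i a_i \otimes u_i$ with $a_i \in T$, which is possible since $\Delta(T) \subseteq T \otimes U$, the coalgebra-morphism property gives $\Delta_{\calB^\circ}(\pi(t)) = \sum_i \pi(a_i) \otimes \pi(u_i) \in T^\varepsilon \otimes \calB^\circ$, hence $\Delta(T^\varepsilon) \subseteq T^\varepsilon \otimes \calB^\circ$. For the remaining condition one must match the left adjoint action $\triangleright$ of $K$ on $U$ with the left $K$-action on $\calB^\circ$ entering \cref{prop:calculi-correspondence}: here the hypothesis that $K$ is a Hopf subalgebra of $U$ is used, together with the fact that $\calB$ consists of $K$-coinvariants, to see that this $K$-action is intertwined by $\pi$ with $\triangleright$. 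Granting this, $K \triangleright T \subseteq T$ yields $K T^\varepsilon \subseteq T^\varepsilon$, and \cref{prop:calculi-correspondence} produces a left-covariant FODC over $\calB$; its dimension is $\dim T^\varepsilon - 1 \le \dim T - 1$, which is why $\pi$ failing to be injective leads to the loss of the one-to-one correspondence noted above.

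The genuinely formal parts are the two properties of $\pi$ in the first paragraph and the coideal condition on $T^\varepsilon$, which are immediate from the duality of $U$ and $\calA$ and from $\calB$ being a subalgebra of $\calA$. The step I expect to be the main obstacle, and the one I would write out in detail, is the $K$-equivariance of $\pi$: it requires recalling the precise left $U$-module structure on $\calB^\circ$ used in \cite{heko-homogeneous} and checking that, under the realization of $T$ inside $\calB^\circ$, it becomes the left adjoint action of $U$ on itself — and it is exactly at this point that one needs $K$ to be a Hopf subalgebra rather than merely a right coideal subalgebra.
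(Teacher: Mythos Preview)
Your plan matches the paper's proof exactly: map $T$ into $\calB^\circ$ via the natural coalgebra map and verify the three hypotheses of \cref{prop:calculi-correspondence}. The only point you leave open --- the $K$-equivariance of $\pi$ --- is settled in the paper by a two-line computation, not by invoking any external description of the $K$-action on $\calB^\circ$: the relevant action is simply $k \cdot \pi(u) := \pi(k u)$ (left multiplication in $U$), and one checks that this coincides with $\pi(k \triangleright u)$ on $\calB$. Indeed, using the Hopf-algebra identity $k u = (k_{(1)} \triangleright u)\, k_{(2)}$ one has, for $b \in \calB$,
\[
b(k u) = b\bigl((k_{(1)} \triangleright u)\, k_{(2)}\bigr)
       = b_{(1)}(k_{(1)} \triangleright u)\, b_{(2)}(k_{(2)})
       = \varepsilon(k_{(2)})\, b(k_{(1)} \triangleright u)
       = b(k \triangleright u),
\]
where the third equality is exactly the defining invariance of $\calB$ under $K$. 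The Hopf-subalgebra hypothesis is used precisely to guarantee $k_{(2)} \in K$ so that this invariance applies; with this in hand, $K \triangleright T \subseteq T$ transports to $K\, T^\varepsilon \subseteq T^\varepsilon$ and the proof is complete.
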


\begin{proof}
As described above, we can consider any $u \in U$ as a linear functional on $\calB$ by $b \mapsto b(u)$.
In this way we obtain a map from $U$ to the dual of $\calB$, which is not injective in general.
It is a general fact that the image of this map is in $\calB^\circ$, due to $U$ being a coalgebra.

Under this map, the unit $1 \in U$ corresponds to the counit $\varepsilon$ on $\calB$.
Furthermore, writing $\tilde{T}$ for the image of $T$ in $\calB^\circ$, the condition $\Delta(T) \subset T \otimes U$ translates to $\Delta(\tilde{T}) \subset \tilde{T} \otimes \calB^\circ$.

For the last condition, consider $k \in K$, $u \in U$ and $b \in \calB$.
Using the identity $x y = (x_{(1)} \triangleright y) x_{(2)}$ and the fact that $K$ is a Hopf subalgebra, we compute
\[
b(k u) = b((k_{(1)} \triangleright u) k_{(2)})
= \varepsilon(k_{(2)}) b(k_{(1)} \triangleright u)
= b(k \triangleright u).
\]
Here we used the fact that $\calB$ is invariant under $K$.
This shows that $k u = k \triangleright u$ as elements of $\calB^\circ$, which gives $K \tilde{T} \subset \tilde{T}$.
Then the result follows from \cref{prop:calculi-correspondence}.
\end{proof}

This formulation brings us in close contact with the corresponding properties satisfied by the equivariant quantizations of $\lien = \lien_1 \oplus \cdots \oplus \lien_r$.

\subsection{Main result}

In this section we discuss how to use our equivariant quantizations of $\lien$ to produce covariant FODCs on $\CqG$ and certain quantum flag manifolds.
We consider the setting of \cite{heko-homogeneous} and compare our results with \cite{heko}, where the authors introduce canonical covariant differential calculi on quantum \emph{irreducible} flag manifolds.

In order to do this we need to switch conventions, since these references work with right coideals and also use the opposite coproduct with respect to ours, that is
\[
\Delta^\op(K_i) = K_i \otimes K_i, \quad
\Delta^\op(E_i) = E_i \otimes K_i + 1 \otimes E_i, \quad
\Delta^\op(F_i) = F_i \otimes 1 + K_i^{-1} \otimes F_i.
\]
To switch to this setting, we consider the quantum analogue of the Chevalley involution, which is the Hopf algebra isomorphism $\omega: \Uqg \to \Uqg^{\mathrm{cop}}$ given by
\[
\omega(K_i) = K_i^{-1}, \quad
\omega(E_i) = F_i, \quad
\omega(F_i) = E_i.
\]
Here $\Uqg^{\mathrm{cop}}$ denotes $\Uqg$ with the opposite coproduct.
We also denote by $\Uqnm$ the quantization corresponding to the negative nilradical $\lien_-$, or more concretely the subalgebra of $\Uqg$ generated by the elements $\{ F_i \}_{i = 1}^r$.

\begin{lemma}
The map $\omega: \Uqg \to \Uqg^{\mathrm{cop}}$ sends left $\Uqg$-coideals of $\Uqn \subset \Uqg$ to right $\Uqg$-coideals of $\Uqnm \subset \Uqg^{\mathrm{cop}}$.
\end{lemma}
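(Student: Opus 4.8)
The plan is to read the statement off from the two structural features of the map $\omega$: it is an algebra isomorphism carrying $\Uqn$ onto $\Uqnm$, and as a coalgebra map into $\Uqg^{\mathrm{cop}}$ it intertwines $\Delta$ with $\Delta^{\op}$. Both are essentially immediate once written down, so the whole argument is formal.

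First I would note that $\omega$ restricts to an algebra isomorphism $\Uqn \to \Uqnm$. Indeed $\omega$ is an algebra automorphism of the underlying algebra of $\Uqg$ and it sends the generating set $\{ E_i \}_{i = 1}^r$ of $\Uqn$ bijectively onto the generating set $\{ F_i \}_{i = 1}^r$ of $\Uqnm$; hence $\omega(\Uqn) = \Uqnm$, with equality rather than mere inclusion. This is the point that guarantees the image of a subspace of $\Uqn$ lands in $\Uqnm$ and not merely in some larger subalgebra.

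Next comes the coalgebra bookkeeping. Since $\omega \colon \Uqg \to \Uqg^{\mathrm{cop}}$ is a morphism of coalgebras (which one checks on generators), it intertwines the comultiplications: $\Delta^{\op} \circ \omega = (\omega \otimes \omega) \circ \Delta$. Let $V \subseteq \Uqn$ be a left $\Uqg$-coideal, that is $\Delta(V) \subseteq \Uqg \otimes V$. Applying $\omega \otimes \omega$, using the intertwining relation and $\omega(\Uqg) = \Uqg$, we obtain
\[
\Delta^{\op}(\omega(V)) = (\omega \otimes \omega)(\Delta(V)) \subseteq (\omega \otimes \omega)(\Uqg \otimes V) = \Uqg \otimes \omega(V).
\]
Since $\Delta^{\op} = \tau \circ \Delta$ with $\tau$ the flip, this is equivalent to $\Delta(\omega(V)) \subseteq \omega(V) \otimes \Uqg$, which together with $\omega(V) \subseteq \Uqnm$ from the first step is exactly the assertion that $\omega(V)$ is a right $\Uqg$-coideal of $\Uqnm \subseteq \Uqg^{\mathrm{cop}}$. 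Running the same computation with $\omega^{-1}$ shows moreover that $\omega$ induces a bijection between left $\Uqg$-coideals of $\Uqn$ and right $\Uqg$-coideals of $\Uqnm$.

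There is no substantive obstacle here. The only points that need a little care are keeping track of left versus right coideals when passing through the flip $\tau$ (equivalently, through the opposite coproduct $\Delta^{\op}$), and the elementary but necessary observation that $\omega(\Uqn)$ equals $\Uqnm$ precisely, so that the image of a coideal is again of the required form.
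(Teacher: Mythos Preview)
Your proof is correct and follows essentially the same approach as the paper, which is equally brief: the paper simply notes that $\omega$ sends left coideals to right coideals because it is a coalgebra map (into $\Uqg^{\mathrm{cop}}$, so the sides swap), and that $\omega(\Uqn) = \Uqnm$ by definition. Your version spells out the intertwining relation $\Delta^{\op}\circ\omega = (\omega\otimes\omega)\circ\Delta$ and the flip more carefully, which is a welcome clarification of what the paper leaves implicit.
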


\begin{proof}
It is clear that $\omega$ sends left coideals to right coideals, since it is a coalgebra map.
Finally $\omega(\Uqn) = \Uqnm$ by definition of $\omega$.
\end{proof}

With these preparations in place, we can state our main result for this section.
We note that the restrictions on $\lieg$ are exactly those from \cref{thm:coideal-quantizations}, which guarantee that $\bbC \oplus \qnil$ is a left $\Uqg$-coideal (the general result would follow from \cref{conj:coideal}).

\begin{theorem}
\label{thm:covariant-fodc}
Let $\lieg$ be a complex semisimple Lie algebra of rank $r$, which does not contain components of type $F_4, E_7, E_8$.
For any increasing sequence $S_1 \subset \cdots \subset S_r$ of proper subsets of $\simpleroots$, consider the equivariant quantization
\[
\qnil = \qnil_1 \oplus \cdots \oplus \qnil_r
\]
of the positive nilradical as in \cref{thm:coideal-quantizations}.

Write $\tang_- = \bbC \oplus \omega(\qnil)$ and $\tang_+ = \bbC \oplus \omega(\qnil)^*$. Then:
\begin{enumerate}
\item $\tang_-$ and $\tang_+$ correspond to left-covariant FODCs on $\Cq[G]$.
\item They also give left-covariant FODCs on $\Cq[G / P_{S_k}]$ for any $k \in \{ 1, \cdots, r \}$.
\end{enumerate}
\end{theorem}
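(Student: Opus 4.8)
The plan is to reduce Theorem~\ref{thm:covariant-fodc} to \cref{lem:tangent-covariant} applied to the Hopf algebra $U = \Uqg$ (with the opposite coproduct) and the Hopf subalgebras $K = \Uqg$ for part (1) and $K = \Uqlk$ for part (2), using $\tang_-$ and $\tang_+$ as the candidate subspaces $T$. Thus I must verify three conditions for each of $\tang_\pm$: that it contains the unit, that it is a right $\Uqg$-coideal (for the opposite coproduct), and that it is invariant under the left adjoint action of the relevant $K$. The unit condition is immediate from the definitions $\tang_- = \bbC \oplus \omega(\qnil)$ and $\tang_+ = \bbC \oplus \omega(\qnil)^*$.

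For the coideal condition on $\tang_-$: \cref{thm:coideal-quantizations} gives that $\bbC \oplus \qnil_k$ is a left $\Uqb$-coideal for each $k$, hence $\bbC \oplus \qnil = \bbC \oplus \qnil_1 \oplus \cdots \oplus \qnil_r$ is a left $\Uqb$-coideal, a fortiori a left $\Uqg$-coideal. Applying the Hopf algebra isomorphism $\omega: \Uqg \to \Uqg^{\mathrm{cop}}$, which is a coalgebra map sending left coideals to right coideals (as recorded in the lemma preceding the theorem), we get that $\tang_- = \bbC \oplus \omega(\qnil)$ is a right $\Uqg$-coideal inside $\Uqnm \subset \Uqg^{\mathrm{cop}}$. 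For $\tang_+$ I would use that the antipode $S$ (or rather the $*$-structure, as in the statement of \cref{thm:intro-calculi}) converts right coideals to right coideals of the appropriate type; concretely, if $V$ is a finite-dimensional right coideal then $V^*$ — meaning the image under the $*$-operation, which is a conjugate-linear coalgebra anti-automorphism composed with the antipode — is again a right coideal. I would spell out that $\Delta(\omega(\qnil)^*) \subset \omega(\qnil)^* \otimes \Uqg$ follows from $\Delta(\omega(\qnil)) \subset \Uqg \otimes \omega(\qnil)$ by applying $* \otimes *$ and using that $*$ is a coalgebra map for the compact real form convention, together with the unit being $*$-invariant.

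For the adjoint-invariance condition: in part (1), $K = \Uqg$ and the condition $\Uqg \triangleright \tang_\pm \subset \tang_\pm$ is \emph{not} what \cref{lem:tangent-covariant} requires for $\CqG$ — there the relevant subalgebra is trivial in the sense that $\CqG$ itself is the quantum group, so the adjoint-invariance condition is vacuous (one takes $K = \bbC$). So part (1) follows directly from the unit and coideal conditions alone. In part (2), the quantum flag manifold $\Cq[G / P_{S_k}]$ is defined by invariance under the Hopf subalgebra $\Uqlk$, so I must check $\Uqlk \triangleright \tang_\pm \subset \tang_\pm$. By \cref{thm:coideal-quantizations}, $\qnil_k$ is a $\Uqlk$-module under $\triangleright$; I need the stronger statement that each $\qnil_j$ with $j \neq k$ is also $\Uqlk$-stable, or at least that the full $\bbC \oplus \qnil$ is $\Uqlk$-stable. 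This should follow from the nesting $S_1 \subset \cdots \subset S_r$: for $j \le k$ we have $\liel_j \subset \liel_k$, so $\qnil_j$ being $\Uqlk$-stable is stronger than being $\Uq(\liel_j)$-stable and needs a separate argument; for $j > k$ one uses that $\qnil_j$ is contained in a twisted quantum Schubert cell $\tschub(w_j)$ which, by \cref{prop:adjoint-action-schubert}, is $\Uq(\liel_{S_j})$-stable and hence $\Uqlk$-stable since $\liel_k \subset \liel_j$. Transporting through $\omega$ (which intertwines $\triangleright$ on $\Uqg$ with $\triangleright$ on $\Uqg^{\mathrm{cop}}$ up to the induced automorphism of $\Uqlk$) and through the $*$-operation gives the claim for $\tang_\pm$.

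The main obstacle I anticipate is precisely the verification that $\bbC \oplus \qnil$ (and not merely each individual $\qnil_k$) is invariant under the left adjoint action of $\Uqlk$ for the \emph{specific} index $k$, across all summands $\qnil_j$. For $j > k$ the Schubert-cell argument above works cleanly, but for $j < k$ one needs that the quantization $\qnil_j$ of the smaller nilradical $\lien_j$ carries not just its own Levi action but is stable under the larger $\Uqlk$. Classically this holds because $\lien_j \subset \lien$ and $\lien$ is $\liel_k$-stable, with $\lien_j$ being exactly the part of $\lien$ in degrees determined by roots not involving $\alpha_{i_1}, \ldots, \alpha_{i_{j-1}}$; the quantum analogue should follow from the uniqueness in \cref{thm:coideal-quantizations} together with \cref{prop:schubert-component}, identifying $\qnil_j$ inside $\tschub(w_j) \subset \Uq(\lieg_{j+1})$ and noting that $\lieg_{j+1}$ sits inside the Levi $\liel_{S_{j+1}}$, but making this precise and checking compatibility with all the conventions (the $\omega$ twist, the $*$-structure, the opposite coproduct) is where the bookkeeping becomes delicate. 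Once that invariance is established, both parts follow by a direct application of \cref{lem:tangent-covariant}.
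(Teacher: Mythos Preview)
Your overall strategy matches the paper's: reduce everything to \cref{lem:tangent-covariant}, handle part (1) with $K = \bbC$ (so the adjoint-invariance is vacuous), and for part (2) check $\Uqlk$-invariance summand by summand. Your treatment of the unit, the coideal conditions via $\omega$ and $*$, and the case $j \geq k$ (where $\Uqlk \subset U_q(\liel_j)$ so $\qnil_j$ is automatically $\Uqlk$-stable) are all correct and essentially identical to the paper.

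The gap is in the case $j < k$. You correctly flag this as the obstacle, but the resolution you sketch cannot work: the statement that $\qnil_j$ is invariant under $\Uqlk \triangleright$ for $j < k$ is simply \emph{false}. Already classically, take $\lieg = \liesl_3$ with $S_2 = \{\alpha_1\}$: then $\lien_1 = \bbC e_{\alpha_1}$, while $f_{\alpha_1} \in \liel_2$ and $[f_{\alpha_1}, e_{\alpha_1}] = -h_{\alpha_1} \notin \lien_1$. So no amount of uniqueness from \cref{thm:coideal-quantizations} or Schubert-cell bookkeeping will produce the invariance you want inside $\Uqg$.

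What the paper does instead is observe that for $j < k$ one has the \emph{inclusion} $\qnil_j \subset \Uqlk$ (since $\lien_j \subset \lieg_{j+1} \subset \liel_{j+1} \subseteq \liel_k$, and the quantum analogue holds by construction). Combined with $\varepsilon(\qnil_j) = 0$, this means every element of $\qnil_j$ acts as zero on $\calB = \Cq[G / P_{S_k}]$, hence vanishes in $\calB^\circ$. Therefore the image of $\bbC \oplus \qnil$ in $\calB^\circ$ equals the image of $\bbC \oplus \bigoplus_{j \geq k} \qnil_j$, and \emph{this} is $\Uqlk$-invariant by your own argument for $j \geq k$. You already noted the relevant inclusion $\lieg_{j+1} \subset \liel_{S_{j+1}}$ in your obstacle paragraph; the missing step is to use it for vanishing in $\calB^\circ$ rather than for invariance in $\Uqg$.
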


\begin{proof}
(1) By \cref{thm:coideal-quantizations} the finite-dimensional subspace $\bbC \oplus \qnil \subset \Uqn$ is a left $\Uqg$-coideal.
Hence $\tang_- = \bbC \oplus \omega(\qnil) \subset \Uqnm$ is a right $\Uqg$-coideal containing the unit.
By \cref{lem:tangent-covariant} this gives a left-covariant FODC over $\Cq[G] = \Uqg^\circ$ (with respect to the trivial Hopf subalgebra $K = \bbC$).
The situation is similar for $\tang_+ = \bbC \oplus \omega(\qnil)^* \subset \Uqb$, where in this case we use the fact that the coproduct is a $*$-homomorphism.

(2) We begin by recalling the relevant classical facts concerning the decomposition $\lien = \lien_1 \oplus \cdots \oplus \lien_r$, as discussed in \cref{sec:decompositions}.
We write $\liel_k = \liel_{S_k}$ for the Levi factor corresponding to the subset $S_k \subset \simpleroots$ and recall that $\lien_{k - 1} \subset \liel_k$.
Since $\liel_1 \subset \cdots \subset \liel_r$, we find that:
\begin{itemize}
\item for $j \geq k$, we have that $\lien_j$ is a $\liel_k$-module,
\item for $j < k$, we have the inclusion $\lien_j \subset \liel_k$.
\end{itemize}
The situation is similar in the quantum setting.
We have $U_q(\liel_1) \subset \cdots \subset U_q(\liel_r)$, since the quantized Levi factor $\UqlS$ is generated by the elements $\{ K_i^{\pm} \}_{i = 1}^r$ and $\{ E_i, F_i \}_{i \in S}$.
Keeping in mind the construction of $\qnil_k$ from \cref{prop:equivariant-quantizations}, we find that:
\begin{itemize}
\item for $j \geq k$, we have that $\qnil_j$ is a $\Uqlk$-module under $\triangleright$,
\item for $j < k$, we have the inclusion $\qnil_j \subset \Uqlk$.
\end{itemize}

Now consider the quantum flag manifold $\calB = \Cq[G / P_{S_k}]$, which is defined as the subalgebra of $\calA = \Cq[G]$ invariant under the Hopf subalgebra $K = \Uqlk$.
As observed above, for $j \geq k$ we have $\Uqlk \triangleright \qnil_j \subseteq \qnil_j$.
On the other hand, for $j < k$ we have $\qnil_j \equiv 0$ as linear functionals on $\calB$, since $\qnil_j \subset \Uqlk$ and $\varepsilon(\qnil_j) = 0$.
It follows that the image of $\bbC \oplus \qnil$ in $\calB^\circ$ is invariant under the left-adjoint action of $K = \Uqlk$.
The same is true for $\tang_-$, since $\omega$ is an algebra map, and for $\tang_+$, since $\Uqlk$ is a Hopf $*$-subalgebra (and we have $x \triangleright y^* = (S(x)^* \triangleright y)^*$).
Then $\tang_-$ and $\tang_+$ give rise to left-covariant FODCs on $\Cq[G / P_{S_k}]$ by \cref{lem:tangent-covariant}.
\end{proof}

Finally we want to compare this construction with the results of \cite{heko}, which we briefly summarize.
For any quantum \emph{irreducible} flag manifold $\bbC_q[G / P_S]$, the authors construct two left-covariant (irreducible) FODCs $\Gamma_\del$ and $\Gamma_{\delbar}$, corresponding classically to the holomorphic and anti-holomorphic parts of the differential calculus over $G / P_S$.
We recall that irreducible flag manifolds arise from \emph{cominuscule} parabolic subalgebras, which correspond to subsets $S = \simpleroots \backslash \{\alpha_s\}$ with $\alpha_s$ appearing with coefficient \emph{one} in the highest root of $\lieg$.

Hence we want to consider the case of $\lieg$ with an increasing sequence $S_1 \subset \cdots \subset S_r$ such that $S_r = \simpleroots \backslash \{\alpha_{i_r}\}$ and $\alpha_{i_r}$ appears with coefficient one in the highest root of $\lieg$ (let us also note in passing that $F_4$, $E_7$, $E_8$ do not admit any irreducible flag manifolds).

\begin{proposition}
\label{prop:comparison-heckenberger-kolb}
For any increasing sequence $S_1 \subset \cdots S_r$ as described above, consider the left-covariant FODCs $\tang_-$ and $\tang_+$ on $\bbC_q[G / P_{S_r}]$ constructed in \cref{thm:covariant-fodc}.
They coincide with the FODCs $\Gamma_\del$ and $\Gamma_{\delbar}$ of Heckenberger-Kolb.
\end{proposition}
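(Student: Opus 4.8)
The plan is to reduce the comparison to the structure of the top summand $\qnil_r$ of the equivariant quantization and then to invoke the Heckenberger--Kolb classification of irreducible left-covariant FODCs. First I would record that if $\bbC_q[G / P_{S_r}]$ is irreducible then $\lieg$ is simple and $S_r = \simpleroots \setminus \{\alpha_{i_r}\}$ is cominuscule, so $\alpha_{i_r}$ occurs with coefficient one in the highest root of $\lieg$. Consequently every root of $\lien_{S_r}$ has $\alpha_{i_r}$-coefficient exactly one, so $N_r = 1$ and $\lien_r = \lien_{r, 1} = \lien_{S_r}$ has a single graded component. By \cref{lem:action-degree-one} this forces $\qnil_r = \qnil_{r, 1} = U_q(\liel_r) \triangleright E_{i_r}$, a single irreducible $U_q(\liel_r)$-module corresponding, under the classical--quantum dictionary, to $\lien_{S_r}$.

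The second step is to see what the quantum tangent spaces of $\tang_-$ and $\tang_+$ become once we pass to $\calB = \bbC_q[G / P_{S_r}]$. In the proof of \cref{thm:covariant-fodc} it was shown that $\omega(\qnil_j) \subset U_q(\liel_r)$ for $j < r$; since these lie in $K = U_q(\liel_r)$ and have vanishing counit, they act as zero on $\calB$, and only the top summand survives. Hence as functionals on $\calB$ the tangent space of $\tang_-$ is $\bbC\varepsilon \oplus \omega(\qnil_{r, 1})$ and that of $\tang_+$ is $\bbC\varepsilon \oplus \omega(\qnil_{r, 1})^*$. In particular both FODCs over $\calB$ have dimension $\dim \lien_{S_r} = \dim_\bbC(G / P_{S_r})$, and, removing the counit, their tangent spaces are the irreducible $U_q(\liel_r)$-modules $\omega(\qnil_{r, 1})$ and $\omega(\qnil_{r, 1})^*$, which in the classical limit are the dual modules $\lien_{S_r}^-$ and $\lien_{S_r}$. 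I would also observe that the conventions already align with \cite{heko}: the Chevalley twist $\omega$ was inserted precisely so that one works inside $\Uqnm \subset \Uqg^{\mathrm{cop}}$ with right coideals, as in that reference.

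The third step is the identification proper. By \cite{heko}, over the irreducible quantum flag manifold $\bbC_q[G / P_{S_r}]$ there are exactly two irreducible left-covariant FODCs, $\Gamma_\del$ and $\Gamma_{\delbar}$, each of dimension $\dim_\bbC(G / P_{S_r})$, interchanged by the $*$-structure, whose quantum tangent spaces consist of $\bbC\varepsilon$ together with the irreducible $U_q(\liel_r)$-submodule of the locally-finite part of $\calB^\circ$ isomorphic to $\lien_{S_r}$, respectively $\lien_{S_r}^-$; moreover, in their computation of that locally-finite part, the relevant module occurs with multiplicity one in the $\alpha_{i_r}$-degree-one graded piece. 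Since $\tang_-$ restricts to a left-covariant FODC over $\calB$ (by \cref{thm:covariant-fodc}) whose tangent space, minus the counit, is exactly this multiplicity-one copy of $\lien_{S_r}^-$ — it sits in the degree-one piece because $E_{i_r}$ has $\alpha_{i_r}$-degree one and this grading is $U_q(\liel_r)$-stable — the two tangent spaces, hence the two calculi, coincide; thus $\tang_- = \Gamma_{\delbar}$, and applying $*$ together with $\tang_+ = \overline{\tang_-}$ and $\Gamma_\del = \overline{\Gamma_{\delbar}}$ gives $\tang_+ = \Gamma_\del$. (The precise assignment of holomorphic versus anti-holomorphic is read off from these module types; what matters is that $\{\tang_-, \tang_+\} = \{\Gamma_\del, \Gamma_{\delbar}\}$.)

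The step I expect to be the main obstacle is this last identification: upgrading "irreducible left-covariant FODC of the correct dimension" to a genuine equality. This rests on the detailed description of the locally-finite part of $\calB^\circ$ from \cite{heko} — in particular the multiplicity-one statement that singles out the submodule $\cong \lien_{S_r}^-$ and the fact that it is precisely the one carrying the calculus $\Gamma_{\delbar}$. A more self-contained route would be to match the two tangent spaces directly by comparing lowest-weight vectors — $\omega(E_{i_r})$ against the explicit generator of $\Gamma_{\delbar}$ in \cite{heko} — but this essentially reproduces the computations of that paper; it does use \cref{lem:action-degree-one} again, to know that the $U_q(\liel_r)$-module generated by $\omega(E_{i_r})$ is all of $\omega(\qnil_{r, 1})$. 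One should also make explicit that $\tang_-|_\calB$ is an \emph{irreducible} FODC, which follows from its reduced tangent space being an irreducible $U_q(\liel_r)$-module lying in the appropriate degree, again via the Heckenberger--Kolb picture.
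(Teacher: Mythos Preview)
Your reduction is the same as the paper's: only the top summand $\qnil_r$ survives on $\calB = \Cq[G / P_{S_r}]$, and cominusculeness forces $\qnil_r = \qnil_{r,1}$. From there the two arguments diverge.

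The paper does not invoke the Heckenberger--Kolb \emph{classification} at all. Instead it uses \cref{prop:schubert-component} to identify $\qnil_{r,1}$ explicitly as the span of quantum root vectors $\{E_{\xi_1},\dots,E_{\xi_n}\}$ for the radical roots, so that $\omega(\qnil_r) = \lspan\{F_{\xi_1},\dots,F_{\xi_n}\}$ on the nose; this is then matched directly against the explicit description of the tangent space of $\Gamma_\del$ given in \cite[Proposition~3.3(v)]{heko}. For the second calculus the paper avoids comparing $F_{\xi_k}^*$ with $E_{\xi_k}$ inside $\Uqg$ (where they differ) and instead appeals to \cite[Theorem~4.2]{kahler}, which says $\Gamma_\del \oplus \Gamma_{\delbar}$ is a $*$-calculus, hence has $*$-invariant tangent space. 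This yields $\tang_- = \Gamma_\del$ and $\tang_+ = \Gamma_{\delbar}$, the opposite labeling from yours.

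Your route via uniqueness is legitimate but, as you yourself flag, leans more heavily on the fine structure of the locally-finite part of $\calB^\circ$ computed in \cite{heko}: you need that the image of $\omega(\qnil_{r,1})$ in $\calB^\circ$ stays irreducible and nonzero, and that the resulting FODC is irreducible, before you can feed it into the ``exactly two'' statement. The paper's explicit root-vector matching sidesteps all of this and is shorter, at the cost of importing the concrete formula from \cite{heko} rather than just the abstract classification.
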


\begin{proof}
Write $\calB = \bbC_q[G / P_{S_r}]$ and consider $\qnil = \qnil_1 \oplus \cdots \oplus \qnil_r$, corresponding to a sequence $S_1 \subset \cdots \subset S_r$ as described above.
We have $\qnil_1 \oplus \cdots \oplus \qnil_{r - 1} \subset U_q(\liel_{S_r})$, as already discussed in \cref{thm:covariant-fodc}. These elements vanish when mapped to $\calB^\circ$, since $\calB$ is defined by invariance under $U_q(\liel_{S_r})$. Hence $\qnil |_\calB = \qnil_r |_\calB$ and the same is true for $\omega(\qnil)$.

In the cominuscule case all the radical roots contain $\alpha_{i_r}$ with coefficient one, which means that the summand $\lien_r$ has only a component of degree one.
For its quantum counterpart $\qnil_r$, this means that is spanned by the quantum root vectors $\{ E_{\xi_1}, \cdots, E_{\xi_n} \}$ corresponding to the radical roots, as in \cref{prop:schubert-component}.
By applying $\omega$ to $\qnil_r$, we find that $\omega(\qnil_r)$ is spanned by the quantum root vectors $\{ F_{\xi_1}, \cdots, F_{\xi_n} \}$.
To check this, we can use the identity $\omega(T_i(X_\alpha)) = c_{i, \alpha} T_i(\omega(X_\alpha))$, which is valid for any weight element $X_\alpha \in \Uqg$ (for some coefficient $c_{i, \alpha}$ that depends only on $i$ and $\alpha$).
By \cite[Proposition 3.3(v)]{heko}, the quantum tangent space for the holomorphic part $\Gamma_\del$ of the FODC is $\lspan \{ F_{\xi_1}, \cdots, F_{\xi_n} \}$.

Next consider $\omega(\qnil_r)^* = \lspan \{ F_{\xi_1}^*, \cdots, F_{\xi_n}^* \}$.
The quantum tangent space for the anti-holomorphic part $\Gamma_{\delbar}$, which appears in \cite[Proposition 3.4(v)]{heko}, is given instead as $\lspan \{ E_{\xi_1}, \cdots, E_{\xi_n} \}$.
The two vector spaces appear different at first sight, and it is certainly not true that $F_{\xi_k}^*$ equals $E_{\xi_k}$ in $\Uqg$. However the two should coincide as elements of $\calB^\circ$, since in this case the Cartan part acts trivially.
Rather than proceeding this way, we can make use of \cite[Theorem 4.2]{kahler}, where it is shown that the FODC $\Gamma = \Gamma_\del \oplus \Gamma_{\delbar}$ is a $*$-calculus over $\calB$.
In this case the quantum tangent space must be invariant under $*$, see for instance \cite[Section 14.1.2, Proposition 6]{klsc}.
This gives the result.
\end{proof}

Hence the FODCs introduced here over the quantum flag manifolds $\bbC_q[G / P_{S_k}]$ generalize those of Heckenberger-Kolb beyond the irreducible case.
The study of further properties of these calculi is left for future investigations.
We only briefly mention that they should have classical dimension, since by specialization at $q = 1$ the quantum tangent spaces considered here should reduce to the classical tangent spaces.
Finally, we should also mention that a similar analysis has recently appeared in \cite{reamonn-ar} for the $A_r$ series.

\appendix

\section{Tensor product decompositions}
\label{sec:appendix-computations}

In this appendix we verify the property used in \cref{prop:coideal-degree-three} for certain tensor products decompositions.
Recall that, if $S \subset \simpleroots$ is a subset of the simple roots of $\lieg$, we denote by $\lieg_S$ the semisimple part of the Levi factor corresponding to $S$, in other words the Lie subalgebra determined by the Dynkin subdiagram corresponding to $S$.
In the case of $S = \simpleroots \backslash \{\alpha_s\}$, the nilradical $\lien_S$ corresponding to $S$ admits an $\bbN$-grading by $\alpha_s$, and we denote by $V_n$ its graded components.
The result concerns the multiplicity of $V_3$ inside $V_1^{\otimes 3}$.

\begin{figure}[h]
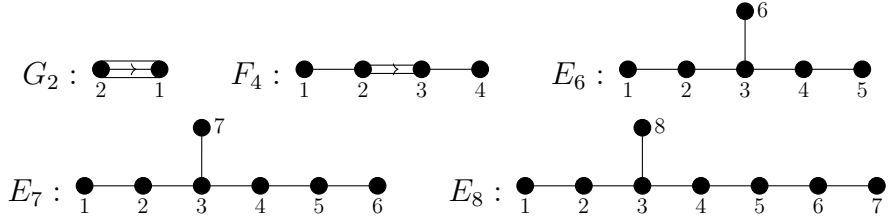

\begin{center}

\begin{tabular}{ccc}
$G_2:$ \dynkin[scale = 2.2, labels = {2, 1}] G2 & \quad
$F_4:$ \dynkin[scale = 2.2, labels = {1, 2, 3, 4}] F4 & \quad
$E_6:$ \dynkin[scale = 2.2, labels = {1, 6, 2, 3, 4, 5}] E6
\end{tabular}

\begin{tabular}{cc}
$E_7:$  \dynkin[scale = 2.2, labels = {1, 7, 2, 3, 4, 5, 6}] E7 & \quad
$E_8:$ \dynkin[scale = 2.2, labels = {1, 8, 2, 3, 4, 5, 6, 7}] E8
\end{tabular}

\end{center}
\caption{The exceptional simple Lie algebras and the numbering we adopt.}
\label{fig:dynkin-numbering}
\end{figure}

\begin{lemma}
\label{lem:degree-three-multiplicity}
Let $\lieg$ be a complex semisimple Lie algebra.
Consider $S = \simpleroots \backslash \{\alpha_s\}$, where $\alpha_s$ has coefficient $\geq 3$ in the highest root of $\lieg$.
Let $V_1$ and $V_3$ be the $\lieg_S$-modules corresponding to the components of degree one and three with respect to the grading by $\alpha_s$.

Then the simple component $V_3$ has multiplicity two in $V_1^{\otimes 3}$.
\end{lemma}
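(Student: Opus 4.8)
The plan is to prove the two inequalities $[V_1^{\otimes 3}:V_3]\ge 2$ and $[V_1^{\otimes 3}:V_3]\le 2$ separately; the first is uniform, while the second is where a case analysis is needed. First I would reduce to $\lieg$ simple: the roots in $\roots(\lien_S)$ are exactly the positive roots of $\lieg$ in which $\alpha_s$ occurs, and these all lie in the simple component of $\lieg$ containing $\alpha_s$, so $\lien_S$ and all its graded pieces $V_n$ are supported on that component, while $\lieg_S$ is the corresponding sub-Levi of that component times the other full simple components, which act trivially on every $V_n$. Hence all the multiplicities in question may be computed inside that single simple component, and I assume $\lieg$ simple from now on.

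For the lower bound I would argue as in \cref{subsec:comments-result}. Consider the two $\lieg_S$-equivariant maps $\phi,\psi\colon V_1^{\otimes 3}\to V_3$ given by $\phi(x\otimes y\otimes z)=[[x,y],z]$ and $\psi(x\otimes y\otimes z)=[x,[y,z]]$; both are surjective, since $[V_1,V_1]=V_2$ and $[V_1,V_2]=V_3$ by \cref{prop:classical-module}(2). These two maps are linearly independent: if $\phi=c\psi$ for a scalar $c$, then evaluating on $x\otimes y\otimes x$ gives $(c-1)[x,[x,y]]=0$ for all $x,y\in V_1$, so either $c=1$, in which case $\phi=\psi$ forces, via the Jacobi identity, $[V_1,V_2]=V_3=0$, or else $[x,[x,y]]=0$ for all $x,y\in V_1$, which after polarization and one further use of Jacobi yields $3[[a,b],c]=0$ for all $a,b,c\in V_1$ and hence again $V_3=0$ — both impossible, since $V_3\ne 0$ by the hypothesis that $\alpha_s$ has coefficient $\ge 3$ in the highest root. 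Since $V_3$ is simple and $V_1^{\otimes 3}$ is semisimple, this gives $[V_1^{\otimes 3}:V_3]=\dim\mathrm{Hom}_{\lieg_S}(V_1^{\otimes 3},V_3)\ge 2$.

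For the upper bound I would first turn the statement into a finite combinatorial count. Since the weights of $V_1$, $V_2$, $V_3$ are roots and hence multiplicity-free, $V_1\otimes V_1$ decomposes multiplicity-freely and, by the argument behind \cref{prop:classical-module}(3), $[V(\kappa)\otimes V_1:V_3]\le 1$ for every dominant weight $\kappa$; therefore $[V_1^{\otimes 3}:V_3]$ equals the number of simple constituents $V(\kappa)$ of $V_1\otimes V_1$ for which $V_3\subseteq V(\kappa)\otimes V_1$. The highest weight of $V_2$ is always one such $\kappa$ (because $V_3=[V_1,V_2]\subseteq V_1\otimes V_2$ while $V_2\subseteq V_1\otimes V_1$), so by the lower bound it only remains to check that there is exactly one more. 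At that point one lists the simple $\lieg$ having a simple root of coefficient $\ge 3$ in the highest root — namely $G_2$, $F_4$, $E_6$, $E_7$, $E_8$ at the relevant nodes, with the numbering of \cref{fig:dynkin-numbering} — and in each case reads off $\lieg_S$ and the highest weights of $V_1$, $V_2$, $V_3$ from the root system, enumerates the constituents $V(\kappa)\subset V_1\otimes V_1$, and tests, writing $\lambda_3$ for the highest weight of $V_3$, the condition ``$\lambda_3-\kappa$ is a weight of $V_1$ and $V_3$ occurs in $V(\kappa)\otimes V_1$''. For $G_2$ this is immediate, since $V_1\cong V_3$ is the two-dimensional $\mathfrak{sl}_2$-module and $V(1)^{\otimes 3}=V(3)\oplus V(1)^{\oplus 2}$; the remaining cases are conveniently handled with a computer algebra system such as \cite{GAP4}.

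The hard part is exactly this upper bound: I do not see any uniform argument for it, and the modules $V_1$ for $E_7$ and especially $E_8$ are sizeable, so the verification is genuinely computational — the delicate point being to keep the bookkeeping of the exceptional root systems and their sub-Levi structures correct.
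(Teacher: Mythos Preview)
Your argument is correct. The reduction to simple $\lieg$ matches the paper, and your linear-independence argument for the two bracket maps $\phi,\psi$ is sound: the polarization step indeed yields $3[a,[b,c]]=0$, so $V_3\ne 0$ forces $[V_1^{\otimes 3}:V_3]\ge 2$. For the upper bound, your reduction to counting the constituents $V(\kappa)\subset V_1\otimes V_1$ with $V_3\subset V(\kappa)\otimes V_1$ is valid, and the ensuing case check is of the same nature as the paper's.

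The route, however, is genuinely different from the paper's. The paper gives no uniform argument at all: it simply lists, for each exceptional $\lieg$ and each admissible $\alpha_s$, the semisimple $\lieg_S$ and the highest weights $\lambda_1,\lambda_3$, and then verifies by computer (LieART) that $[V(\lambda_1)^{\otimes 3}:V(\lambda_3)]=2$ in every case; the paper even remarks afterwards that it does not know a conceptual reason for the result. Your lower bound supplies exactly such a reason for half of the statement, and it is robust (it uses nothing beyond $[V_1,V_2]=V_3\ne 0$). What the paper's approach buys is that a single computation certifies both inequalities at once, with no separate argument needed; what your approach buys is that the conceptual half survives even if one mistrusts the computer check, and it explains \emph{why} the multiplicity is at least two. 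Your upper-bound reformulation (count the $\kappa$) is a mild reorganization of the same case-by-case work rather than a genuine shortcut; in practice the verification is comparable in effort to the paper's direct tensor-cube decomposition.
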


\begin{proof}
We can reduce to the simple case, since the root system of a semisimple Lie algebra is the disjoint union of the root systems of the simple components. Then $\alpha_s$ belongs to exactly one component, with the other components acting trivially on $V_1$ and $V_3$.

A simple root $\alpha_s$ can appear with coefficient $\geq 3$ in the highest root of $\lieg$ only for the exceptional Lie algebras.
The corresponding highest roots $\theta_\lieg$ are
\[
\begin{split}
\theta_{E_6} & = \alpha_1 + 2 \alpha_2 + 3 \alpha_3 + 2 \alpha_4 + \alpha_5 + 2 \alpha_6, \\
\theta_{E_7} & = 2 \alpha_1 + 3 \alpha_2 + 4 \alpha_3 + 3 \alpha_4 + 2 \alpha_5 + \alpha_6 + 2 \alpha_7, \\
\theta_{E_8} & = 2 \alpha_1 + 4 \alpha_2 + 6 \alpha_3 + 5 \alpha_4 + 4 \alpha_5 + 3 \alpha_6 + 2 \alpha_7 + 3 \alpha_8, \\
\theta_{F_4} & = 2 \alpha_1 + 3 \alpha_2 + 4 \alpha_3 + 2 \alpha_4, \\
\theta_{G_2} & = 3 \alpha_1 + 2 \alpha_2.
\end{split}
\]
Here the numbering for the simple roots is as depicted in \cref{fig:dynkin-numbering}.

For each of these cases, we consider all subsets $S = \simpleroots \backslash \{\alpha_s\}$ such that $\alpha_s$ appears with coefficient $\geq 3$ in $\theta_\lieg$.
Write $\lambda_1$ and $\lambda_3$ for the highest weights of the $\lieg_S$-modules $V_1$ and $V_3$, corresponding to the components of degrees one and three.
Our goal is to verify that $V(\lambda_3)$ appears with multiplicity two in the tensor product $V(\lambda_1)^{\otimes 3}$.

To minimize potential mistakes in the verification, we have used the Mathematica package LieART \cite{lieart}, which uses the same numbering for the Dynkin diagrams we have considered here.
The code for this verification is available with the arXiv submission.

Below we tabulate the relevant data for each exceptional Lie algebra $\lieg$.
In the first row we write the simple root $\alpha_s$ under consideration, together with the semisimple Lie algebra $\lieg_S$ corresponding to $S = \simpleroots \backslash \{\alpha_s\}$.
In the second row we write the concrete identification between the simple roots and the fixed numbering for the Dynkin diagram of $\lieg_S$ as given in \cref{fig:dynkin-numbering}.
In the third and fourth row we write the highest weights $\lambda_1$ and $\lambda_3$ for the components of degree one and three.
We express them both in terms of the simple roots of $\lieg$ and in terms of the fundamental weights of the semisimple Lie algebra $\lieg_S$.

For $\lieg = G_2$ we have one case to consider.

{\renewcommand{\arraystretch}{1.4}
\begin{center}
\begin{tabular}{|l|} 
\hline
$\alpha_s = \alpha_1$ and $\lieg_S = A_1$ \\
\hline \hline
$(\alpha_2) \ \leftrightarrow \ (\alpha_1)$ \\
\hline
$\lambda_1 = \alpha_1 + \alpha_2 \equiv \omega_1$ \\
\hline
$\lambda_3 = 3 \alpha_1 + 2 \alpha_2 \equiv \omega_1$ \\
\hline
\end{tabular}
\end{center}}

For $\lieg = F_4$ we have two cases to consider.

{\renewcommand{\arraystretch}{1.4}
\begin{center}
\begin{tabular}{|l|} 
\hline
$\alpha_s = \alpha_2$ and $\lieg_S = A_1 \oplus A_2$ \\
\hline \hline
$(\alpha_1) \sqcup (\alpha_3, \alpha_4) \ \leftrightarrow \ (\alpha_1) \sqcup (\alpha_1, \alpha_2)$ \\
\hline
$\lambda_1 = \alpha_1 + \alpha_2 + 2 \alpha_3 + 2 \alpha_4 \equiv (\omega_1, 2 \omega_2)$ \\
\hline
$\lambda_3 = 2 \alpha_1 + 3 \alpha_2 + 4 \alpha_3 + 2 \alpha_4 \equiv (\omega_1, 0)$ \\
\hline
\end{tabular}
\end{center}}

{\renewcommand{\arraystretch}{1.4}
\begin{center}
\begin{tabular}{|l|} 
\hline
$\alpha_s = \alpha_3$ and $\lieg_S = A_2 \oplus A_1$ \\
\hline \hline
$(\alpha_1, \alpha_2) \sqcup (\alpha_4) \ \leftrightarrow \ (\alpha_1, \alpha_2) \sqcup (\alpha_1)$ \\
\hline
$\lambda_1 = \alpha_1 + \alpha_2 + \alpha_3 + \alpha_4 \equiv (\omega_1, \omega_1)$ \\
\hline
$\lambda_3 = \alpha_1 + 2 \alpha_2 + 3 \alpha_3 + 2 \alpha_4 \equiv (0, \omega_1)$ \\
\hline
\end{tabular}
\end{center}}

For $\lieg = E_6$ we have one case to consider.

{\renewcommand{\arraystretch}{1.4}
\begin{center}
\begin{tabular}{|l|} 
\hline
$\alpha_s = \alpha_3$ and $\lieg_S = A_2 \oplus A_2 \oplus A_1$ \\
\hline \hline
$(\alpha_1, \alpha_2) \sqcup (\alpha_4, \alpha_5) \sqcup (\alpha_6) \ \leftrightarrow \ (\alpha_1, \alpha_2) \sqcup (\alpha_1, \alpha_2) \sqcup (\alpha_1)$ \\
\hline
$\lambda_1 = \alpha_1 + \alpha_2 + \alpha_3 + \alpha_4 + \alpha_5 + \alpha_6 \equiv (\omega_1, \omega_2, \omega_1)$ \\
\hline
$\lambda_3 = \alpha_1 + 2 \alpha_2 + 3 \alpha_3 + 2 \alpha_4 + \alpha_5 + 2 \alpha_6 \equiv (0, 0, \omega_1)$ \\
\hline
\end{tabular}
\end{center}}

For $\lieg = E_7$ we have three cases to consider.

{\renewcommand{\arraystretch}{1.4}
\begin{center}
\begin{tabular}{|l|} 
\hline
$\alpha_s = \alpha_2$ and $\lieg_S = A_1 \oplus A_5$ \\
\hline \hline
$(\alpha_1) \sqcup (\alpha_7, \alpha_3, \alpha_4, \alpha_5, \alpha_6) \ \leftrightarrow \ (\alpha_1) \sqcup (\alpha_1, \alpha_2, \alpha_3, \alpha_4, \alpha_5)$ \\
\hline
$\lambda_1 = \alpha_1 + \alpha_2 + 2 \alpha_3 + 2 \alpha_4 + 2 \alpha_5 + \alpha_6 + \alpha_7 \equiv (\omega_1, \omega_4)$ \\
\hline
$\lambda_3 = 2 \alpha_1 + 3 \alpha_2 + 4 \alpha_3 + 3 \alpha_4 + 2 \alpha_5 + \alpha_6 + 2 \alpha_7 \equiv (\omega_1, 0)$ \\
\hline
\end{tabular}
\end{center}}

{\renewcommand{\arraystretch}{1.4}
\begin{center}
\begin{tabular}{|l|} 
\hline
$\alpha_s = \alpha_3$ and $\lieg_S = A_2 \oplus A_3 \oplus A_1$ \\
\hline \hline
$(\alpha_1, \alpha_2) \sqcup (\alpha_4, \alpha_5, \alpha_6) \sqcup (\alpha_7) \ \leftrightarrow \ (\alpha_1, \alpha_2) \sqcup (\alpha_1, \alpha_2, \alpha_3) \sqcup (\alpha_1)$ \\
\hline
$\lambda_1 = \alpha_1 + \alpha_2 + \alpha_3 + \alpha_4 + \alpha_5 + \alpha_6 + \alpha_7 \equiv (\omega_1, \omega_3, \omega_1)$ \\
\hline
$\lambda_3 = \alpha_1 + 2 \alpha_2 + 3 \alpha_3 + 3 \alpha_4 + 2 \alpha_5 + \alpha_6 + 2 \alpha_7 \equiv (0, \omega_1, \omega_1)$ \\
\hline
\end{tabular}
\end{center}}

{\renewcommand{\arraystretch}{1.4}
\begin{center}
\begin{tabular}{|l|} 
\hline
$\alpha_s = \alpha_4$ and $\lieg_S = A_4 \oplus A_2$ \\
\hline \hline
$(\alpha_1, \alpha_2, \alpha_3, \alpha_7) \sqcup (\alpha_5, \alpha_6) \ \leftrightarrow \ (\alpha_1, \alpha_2, \alpha_3, \alpha_4) \sqcup (\alpha_1, \alpha_2)$ \\
\hline
$\lambda_1 = \alpha_1 + 2 \alpha_2 + 2 \alpha_3 + \alpha_4 + \alpha_5 + \alpha_6 + \alpha_7 \equiv (\omega_2, \omega_2)$ \\
\hline
$\lambda_3 = 2 \alpha_1 + 3 \alpha_2 + 4 \alpha_3 + 3 \alpha_4 + 2 \alpha_5 + \alpha_6 + 2 \alpha_7 \equiv (\omega_1, 0)$ \\
\hline
\end{tabular}
\end{center}}

For $\lieg = E_8$ we have six cases to consider.

{\renewcommand{\arraystretch}{1.4}
\begin{center}
\begin{tabular}{|l|} 
\hline
$\alpha_s = \alpha_2$ and $\lieg_S = A_1 \oplus A_6$ \\
\hline \hline
$(\alpha_1) \sqcup (\alpha_8, \alpha_3, \alpha_4, \alpha_5, \alpha_6, \alpha_7) \ \leftrightarrow \ (\alpha_1) \sqcup (\alpha_1, \alpha_2, \alpha_3, \alpha_4, \alpha_5, \alpha_6)$ \\
\hline
$\lambda_1 = \alpha_1 + \alpha_2 + 2 \alpha_3 + 2 \alpha_4 + 2 \alpha_5 + 2 \alpha_6 + \alpha_7 + \alpha_8 \equiv (\omega_1, \omega_5)$ \\
\hline
$\lambda_3 = 2 \alpha_1 + 3 \alpha_2 + 5 \alpha_3 + 4 \alpha_4 + 3 \alpha_5 + 2 \alpha_6 + \alpha_7 + 3 \alpha_8 \equiv (\omega_1, \omega_1)$ \\
\hline
\end{tabular}
\end{center}}

{\renewcommand{\arraystretch}{1.4}
\begin{center}
\begin{tabular}{|l|} 
\hline
$\alpha_s = \alpha_3$ and $\lieg_S = A_2 \oplus A_4 \oplus A_1$ \\
\hline \hline
$(\alpha_1, \alpha_2) \sqcup (\alpha_4, \alpha_5, \alpha_6, \alpha_7) \sqcup (\alpha_8) \ \leftrightarrow \ (\alpha_1, \alpha_2) \sqcup (\alpha_1, \alpha_2, \alpha_3, \alpha_4) \sqcup (\alpha_1)$ \\
\hline
$\lambda_1 = \alpha_1 + \alpha_2 + \alpha_3 + \alpha_4 + \alpha_5 + \alpha_6 + \alpha_7 + \alpha_8 \equiv (\omega_1, \omega_4, \omega_1)$ \\
\hline
$\lambda_3 = \alpha_1 + 2 \alpha_2 + 3 \alpha_3 + 3 \alpha_4 + 3 \alpha_5 + 2 \alpha_6 + \alpha_7 + 2 \alpha_8 \equiv (0, \omega_2, \omega_1)$ \\
\hline
\end{tabular}
\end{center}}

{\renewcommand{\arraystretch}{1.4}
\begin{center}
\begin{tabular}{|l|} 
\hline
$\alpha_s = \alpha_4$ and $\lieg_S = A_4 \oplus A_3$ \\
\hline \hline
$(\alpha_1, \alpha_2, \alpha_3, \alpha_8) \sqcup (\alpha_5, \alpha_6, \alpha_7) \ \leftrightarrow \ (\alpha_1, \alpha_2, \alpha_3, \alpha_4) \sqcup (\alpha_1, \alpha_2, \alpha_3)$ \\
\hline
$\lambda_1 = \alpha_1 + 2 \alpha_2 + 2 \alpha_3 + \alpha_4 + \alpha_5 + \alpha_6 + \alpha_7 + \alpha_8 \equiv (\omega_2, \omega_3)$ \\
\hline
$\lambda_3 = 2 \alpha_1 + 3 \alpha_2 + 4 \alpha_3 + 3 \alpha_4 + 3 \alpha_5 + 2 \alpha_6 + \alpha_7 + 2 \alpha_8 \equiv (\omega_1, \omega_1)$ \\
\hline
\end{tabular}
\end{center}}

{\renewcommand{\arraystretch}{1.4}
\begin{center}
\begin{tabular}{|l|} 
\hline
$\alpha_s = \alpha_6$ and $\lieg_S = E_6 \oplus A_1$ \\
\hline \hline
$(\alpha_1, \alpha_2, \alpha_3, \alpha_4, \alpha_5, \alpha_8) \sqcup (\alpha_7) \ \leftrightarrow \ (\alpha_1, \alpha_2, \alpha_3, \alpha_4, \alpha_5, \alpha_6) \sqcup (\alpha_1)$ \\
\hline
$\lambda_1 = 2 \alpha_1 + 3 \alpha_2 + 4 \alpha_3 + 3 \alpha_4 + 2 \alpha_5 + \alpha_6 + \alpha_7 + 2 \alpha_8 \equiv (\omega_1, \omega_1)$ \\
\hline
$\lambda_3 = 2 \alpha_1 + 4 \alpha_2 + 6 \alpha_3 + 5 \alpha_4 + 4 \alpha_5 + 3 \alpha_6 + 2 \alpha_7 + 3 \alpha_8 \equiv (0, \omega_1)$ \\
\hline
\end{tabular}
\end{center}}

{\renewcommand{\arraystretch}{1.4}
\begin{center}
\begin{tabular}{|l|} 
\hline
$\alpha_s = \alpha_8$ and $\lieg_S = A_7$ \\
\hline \hline
$(\alpha_1, \alpha_2, \alpha_3, \alpha_4, \alpha_5, \alpha_6, \alpha_7) \ \leftrightarrow \ (\alpha_1, \alpha_2, \alpha_3, \alpha_4, \alpha_5, \alpha_6, \alpha_7)$ \\
\hline
$\lambda_1 = \alpha_1 + 2 \alpha_2 + 3 \alpha_3 + 3 \alpha_4 + 3 \alpha_5 + 2 \alpha_6 + \alpha_7 + \alpha_8 \equiv \omega_5$ \\
\hline
$\lambda_3 = 2 \alpha_1 + 4 \alpha_2 + 6 \alpha_3 + 5 \alpha_4 + 4 \alpha_5 + 3 \alpha_6 + 2 \alpha_7 + 3 \alpha_8 \equiv \omega_7$ \\
\hline
\end{tabular}
\end{center}}

In all cases listed above, we have that $V(\lambda_3)$ appears with multiplicity two in $V(\lambda_1)^{\otimes 3}$.
\end{proof}

At the moment we do not have a conceptual reason as for why this property should be expected.
We note that this could be reformulated as a verification on certain decompositions of the positive roots (which is perhaps easier to verify), since each simple component $V(\nu)$ occurring in $V(\lambda) \otimes V(\mu)$ is of the form $\nu = \lambda + \mu^\prime$, with $\mu^\prime$ a weight of $V(\mu)$.

\end{document}